\newtheorem{remark}{Remark}[section]
\newtheorem{lemma}{Lemma}[section]
\newtheorem{theorem}{Theorem}[section]
\newcommand{\norm}[1]{\left\Vert#1\right\Vert}
\newcommand{\norme}[1]{\left\Vert\hskip -0.8pt \left\vert #1 \right\vert\hskip -0.8pt\right\Vert}
\newcommand{\set}[1]{\left\{#1\right\}}
\newcommand{\av}[1]{\left\{#1\right\}}
\newcommand{\jm}[1]{\left[#1\right]}
\newcommand{\T}{\mathcal{T}}
\newcommand{\I}{\mathrm{I}}
\newcommand{\F}{\mathcal{F}}
\newcommand{\bn}{\mathbf{n}}
\newcommand{\ga}{\gamma}
\newcommand{\Ga}{\Gamma}
\newcommand{\na}{\nabla}
\newcommand{\Om}{\Omega}
\newcommand{\pa}{\partial}
\begin{document}

\begin{frontmatter}


 \title{A stabilized nonconforming Nitsche's extended finite element method for Stokes interface problems\tnoteref{}}
 \tnotetext[]{}
 \author{Xiaoxiao He $^{1,\star}$}
 \ead{(hxx@njupt.edu.cn) College of Science, Nanjing University of Posts and Telecommunications, Nanjing, 210023, People's Republic of China.}

%
\author{Fei Song$^{2}$}
\ead{(songfei@njfu.edu.cn) College of Science, Nanjing Forestry University, Nanjing, 210037, People's Republic of China.}

\author{Weibing Deng$^{3}$}
\ead{(wbdeng@nju.edu.cn) Department of Mathematics, Nanjing University, Nanjing, 210093, People's Republic of China.}


\begin{abstract}
In this paper, a stabilized extended finite element method is proposed for Stokes interface problems on unfitted triangulation elements which do not require the interface align with the triangulation. The velocity solution and pressure solution on each side of the interface are separately expanded in the standard nonconforming piecewise linear polynomials and the piecewise constant polynomials, respectively. Harmonic weighted fluxes and arithmetic fluxes are used across the interface and cut edges (segment of the edges cut by the interface), respectively. Extra stabilization terms involving velocity and pressure are added to ensure the stable inf-sup condition. We show a priori error estimates under additional regularity hypothesis. Moreover, the errors {in energy and $L^2$ norms for velocity and  the error in $L^2$ norm for pressure} are robust with respect to the viscosity {and independent of the location of the interface}. Results of numerical experiments are presented to {support} the theoretical analysis. 
\end{abstract}

\begin{keyword}
Stokes interface problems \sep NXFEM  \sep nonconforming finite element

\MSC 65N12, 65N15, 65N30

\end{keyword}

\end{frontmatter}


\section{Introduction}
A variety of phenomena with discontinuities exist in the real world. For example, because of the different physical parameters, the velocity has kinks and the pressure is discontinuous for the multiphase flow. Therefore, simulating such phenomena must treat the discontinuities carefully. Standard finite element methods can perform well when the interface coincides with mesh lines, known as the interface-fitted {mesh}. Optimal convergence orders can be obtained for {interface-fitted mesh methods} where every element is contained in one sub-region (see \cite{bs08,ciarlet78}).

However, it is expensive to generate a good interface-fitted mesh for the complicated interface {and especially for the} time-dependent interface problems. Therefore, varieties of unfitted grid numerical methods have been proposed over the past decades, {as they can not consider the position of the interface, which are very attractive due to their simplicity. That is to say,} those methods do allow that the interface is not aligned with the mesh. {Some special techniques incorporating the jump conditions across the interface with the unfitted grid methods are needed.} One way is the immersed finite element methods based on cartesian mesh where {the standard finite element basis functions are locally modified} for elements cut by the interface to satisfy the jump conditions across the interface exactly or approximately. We can see \cite{gll08,gl10,hl99,Kwak2009An,llw03,Lin2015Nonconforming,Lin2015Partially} for elliptic interface problems and \cite{Adjerid2015,Lin2013} for Stokes interface problems.

The other way is the extended finite element methods (XFEMs) based on {unfitted-interface} {mesh}, which are mainly applied to solve the problems with discontinuities, kinks and singularities within elements. For XFEMs, extra basis functions are added for elements intersected by the interface so that the discontinuities can be captured, and the jump conditions are enforced by a variant of Nitsche's approach. This {Nitsche's XFE} method (NXFEM) was originally considered in \cite{hh02} to solve the elliptic interface problems. Then a large number of related methods have been developed, such as \cite{Barrau2012A,Wadbro2013A,Burman2016Robust,Capatina2014NONCONFORMING,Capatina2017Extension,hxx2020,hwx2017,m09,Wu2010An,xiao2020} for elliptic interface problems, \cite{Cattaneo2015Stabilized,Hansbo2014A,Kirchhart16,Chen19,Wang2015A} for Stokes interface problems and \cite{Massing2018A} for Oseen problems.

From now on, we will {focus on} the NXFEM schemes {to solve the {Stokes} interface problems}. In this paper we {consider} the following two-phase {Stokes} problem of two fluids with different kinematic viscosities on a bounded polygonal domain {$\Omega \ {\subset}\ \mathbb{R}^2$}. The whole domain is crossed by an interface $\Gamma$ which is assumed to have at least $C^2$-smooth and is divided into two open sets $\Omega_1$ and $\Omega_2$ (see Figure~\ref{domain} for an illustration). Denote by $[v]=v|_{\Omega_1}-v|_{\Omega_2}$ the jump across the interface $\Gamma$. Then we study the problem as follows: Find a velocity $\mathbf{u}$ and a pressure $p$ such that
\begin{equation}\label{eP}\left\{
\begin{aligned}
            & - \na\cdot\big(\mu \na \mathbf{u}\big)+\nabla p  =  \mathbf{f},\qquad   &\text{ in }\Om_1\cup\Om_2,\\
            & \nabla \cdot \mathbf{u} =0,\qquad  &\text{ in }\Om_1\cup\Om_2,\\
            & [\mathbf{u}]=0,\  [\mu \nabla \mathbf{u}\cdot \mathbf{n}-p\mathbf{n}]=\sigma \kappa \mathbf{n}, \qquad &\text{ on } \Ga, \\
            &  \mathbf{u} =  0,\qquad &\text{ on } \pa\Om,
\end{aligned}\right.
\end{equation}
where $\mathbf{f}\in [L^2(\Omega)]^2$ and $\mu$ is a piecewise constant viscosity, namely $\mu|_{\Omega_i}=\mu_i >0$.  $\sigma$ is the surface tension coefficient, $\kappa$ is the curvature of the interface , and $\mathbf{n}$ is the unit normal vector on $\Gamma$ pointing from $\Omega_1$ to $\Omega_2$.

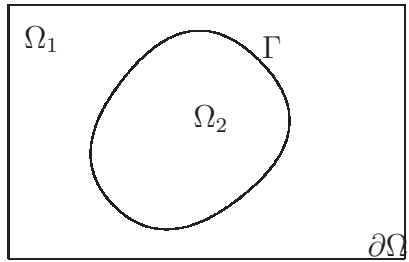
\begin{figure}[htp]
\centering
\begin{picture}(200,100)(-20,0)
 \put(-20,0){\line(1,0){150}}
 \put(-20,0){\line(0,1){96}}
 \put(130,0){\line(0,1){96}}
 \put(-20,96){\line(1,0){150}}
 \put(-14,80){$\Omega_1$}
 \put(116,1){$\partial\Omega$}

 \qbezier(25,70)(50,100)(75,75)
 \qbezier(75,75)(100,50)(70,25)
 \qbezier(70,25)(40,0)(20,20)
 \qbezier(20,20)(0,40)(25,70)
 \put(50,50){$\Omega_2$}
 \put(76,76){$\Gamma$}

\end{picture}
\caption{A sample domain $\Omega$.}\label{domain}
\end{figure}

It is well known that mixed finite elements are a typical choice to approximate a saddle point problem without interface. {Therefore, the natural idea is that same finite element spaces would be adequate to solve {Stokes} interface problem by the NXFEM formulation}. In \cite{hxx2020}, we have studied a nonconforming NXFEM to solve elliptic interface problems. Thus, we want to apply it to solve Stokes interface problems. However, since the computational mesh of the XFEMs does not fit the interface, the approximation of the pressure may be unstable near the interface even though for the inf-sup stable finite elements (see \cite{Cattaneo2015Stabilized}). That is to say, XFEM {break} the stability condition for mixed problems. Therefore, extra pressure stabilization approaches in the elements cut by the interface are {chosen} to ensure the inf-sup condition. {Before introducing our method, we investigate the stabilization techniques used in the literatures}. {For example,} the NXFEM with $P_1^{bubble}/P_1$ couple functions was proposed in \cite{Cattaneo2015Stabilized}. {Instead of stabilization techniques based on the interior penalty technique, the symmetric pressure stabilization operator based on Brezzi-Pitk$\ddot{a}$ranta stability technique on the cut region was used to ensure the stability. {They also considered the case of unstable {$P_1/P_1$} couple and employed the Brezzi-Pitk$\ddot{a}$ranta stabilization on the entire domain.} Then, a NXFEM based on $P_1$-iso-$P_2/P_1$ elements to solve {Stokes} interface problems was proposed \cite{Hansbo2014A}. In the method, extra stabilization terms for normal-gradient jumps over some element faces with respect to both pressure and velocity were added. In \cite{Kirchhart16}, an XFEM with the $P_2/P_1$ pair as underlying spaces was studied and the same stabilization technique as in \cite{Hansbo2014A} was used. Recently, a Nitsche formulation for Stokes interface problems based on $P_1/P_1$ elements was developed in \cite{Wang2015A}, where on a patch of elements intersected by the interface, extra penalty terms that contained the difference between the solution and an $L^2$ projection of the solution for velocity and pressure were added to ensure the stability. This extra penalty terms are called ghost penalty which was first proposed in \cite{Burman2010Ghost}. Very recently, the nonconforming-$P_1/P_0$ NXFEM for a steady state {Stokes} interface problem was considered in \cite{Chen19}. The arithmetic averages {were} used {and some stabilization terms were defined} on interface edges and cut edges. {It is proved that the energy error is independent of the viscosity coefficients and the position of the interface with respect to the mesh.} {We remark that the extended nonconforming $P_1/P_0$ for Stokes interface problems with the unfitted mesh was also considered in PhD thesis \cite{Hammou}, where the weights dependent on the viscosity parameters and the area of local sub-region cut by the interface were used across the interface, and the weights dependent on the area of two local elements were used on the local cut segments. The stabilization terms based on a projection operator for the velocity was added on the local cut segments. The optimal energy error is robust with respect to the parameter and the position of the interface with respect to the mesh. And the error estimates in $L^2$-norm for velocity and pressure have been analyzed.}

In this paper, we will use the nonconforming NXFEM of \cite{hxx2020} and propose an accurate and stable extended finite element method for {Stokes} interface problems based on nonconforming-$P_1/P_0$ shape functions with {the} {unfitted-interface} {mesh}. Although the same spaces considered in this paper (compared to \cite{Hammou,Chen19}), we mention the following contributions of this paper. {Instead of the weights involving the viscosity parameters and subareas in \cite{Hammou} and the arithmetic averages in \cite{Chen19}, harmonic weight fluxes only involving the viscosity parameters are used on the interface. The arithmetic averages same as that used in \cite{Chen19} are adopted on cut edges (the local segment of edges cut by the interface), which are different from the weights depended on the subareas in \cite{Hammou}}. {Comparing with \cite{Chen19}, different} stabilization terms involving the jumps in the normal pressure on the edges and velocity gradients in the vicinity of the interface are added in our method. Moreover, our finite element space to approximate the pressure is different from \cite{Chen19}. {Optimal error estimates in energy norm for velocity and in $L^2$ norm for pressure are obtained. Furthermore, optimal error estimates in $L^2$ norm for velocity is proved assuming additional regularity.} We shows that the errors do not depend on the jump of different viscosities {and the position of the interface with respect to the mesh. Finally, a series of numerical examples are discussed to illustrate our theoretical analysis.}

The rest of this paper is organized as follows. In Section~\ref{method}, we describe the Nitsche's extended finite element method formulation. In Section~\ref{prepare}, we list some preliminary lemmas. The stable inf-sup condition and error analysis are given in Section~\ref{estimation}. Numerical tests are presented in Section~\ref{test}. Finally, we make a conclusion in Section~\ref{conclude}.

Throughout the paper, 
{$C$ and $C$ with a subscript} are generic positive constants
which are independent of $h$, the penalty parameters, and the jump of the viscosity coefficient $\mu$. We also use the shorthand notation
$A\lesssim B$ and $B\gtrsim A$ for the inequality $A\leq C B$ and $B\geq CA$.
$A\eqsim B$ is for the statement $A\lesssim B$ and $B\lesssim A$. Moreover, denote by $H^s(\Omega_1\cup\Omega_2):=\{v: v|_{\Omega_i}\in H^s(\Omega_i),i=1,2\}$ the piecewise $H^s$ space on $\Omega_1\cup \Omega_2$ and by $||v||_{s,\Omega_1\cup\Omega_2}$ and $|v|_{s,\Omega_1\cup\Omega_2}$ its norm and semi-norm.

\section{Finite element formulation}\label{method} Let $\set{\T_{h}}$ be a family of conforming, quasi-uniform, and regular triangulations of the domain $\Omega$ independent of the location of the interface $\Gamma$. {Moreover,} the mesh should be fine enough to ensure {that} the interface is well resolved. {To do this, we need to make some assumptions concerning the intersection between $\Gamma$ and the mesh (see assumptions (A1)--(A3) below)}. For any $K\in {\T}_h$, define $h_K$ as diam$(K)$ and $h:=\max_{K\in \T_h}h_K$. Then $h\eqsim h_K$. Note that any element $K\in \T_h$ is considered as closed. Let us introduce the set of cut elements $G_h^\Gamma :=\{K\in \T_h: K\cap \Gamma \neq \varnothing\}$ and denote $\Gamma_K=K\cap\Gamma$ for $K\in G_h^\Gamma$. Denote $\T_{h,i}:=\{K\in\T_{h}: K\cap\Omega_i\neq \varnothing\}$. Then we define the elements {extended and restricted} sub-domains $\Omega_{h,i}^{+}$ and $\Omega_{h,i}^{-}$ {respectively, as follows:}
$$\Omega_{h,i}^{+}:= \bigcup_{K\in \T_{h,i}}K,\   \Omega_{h,i}^{-}:=\bigcup_{K\in \T_{h,i}\setminus G^\Gamma_h}K.$$
See Figure~\ref{fig_domains} for an illustration of these definitions.
\begin{figure}[htp]
\centering
\includegraphics[scale=0.4]{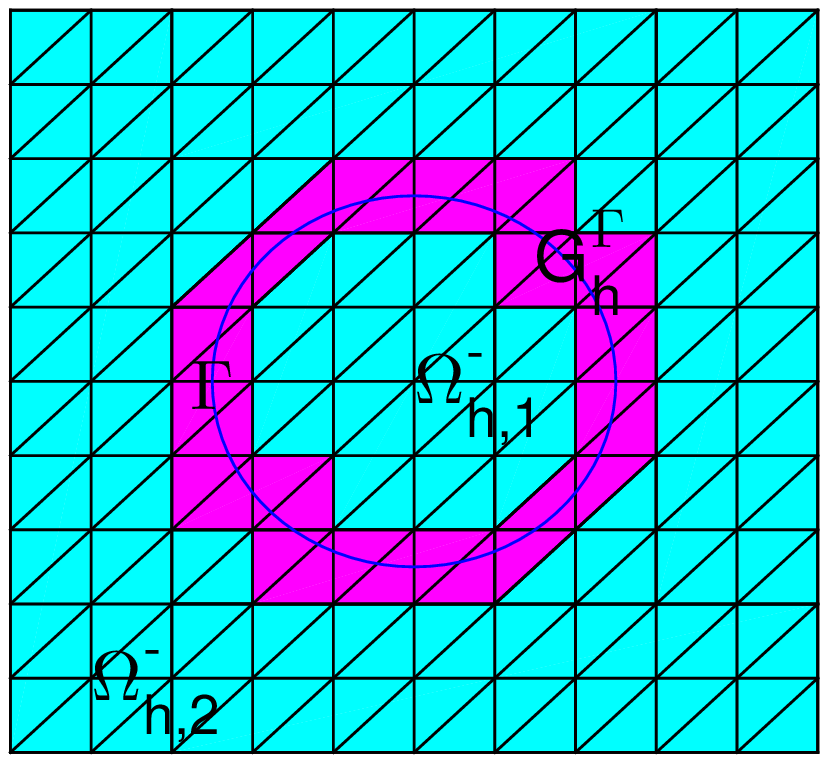}
\includegraphics[scale=0.4]{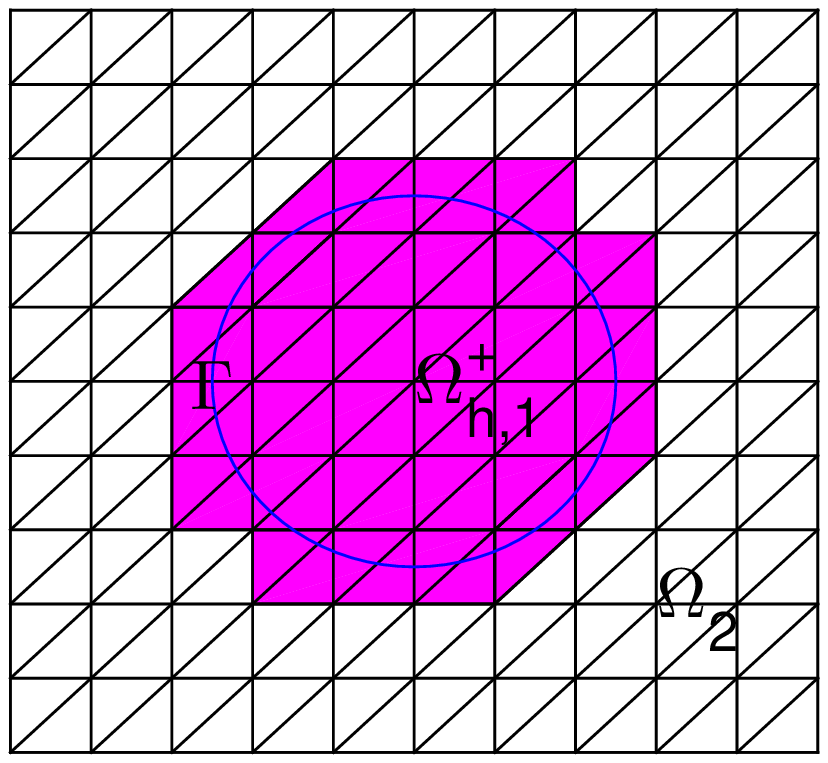}
\includegraphics[scale=0.4]{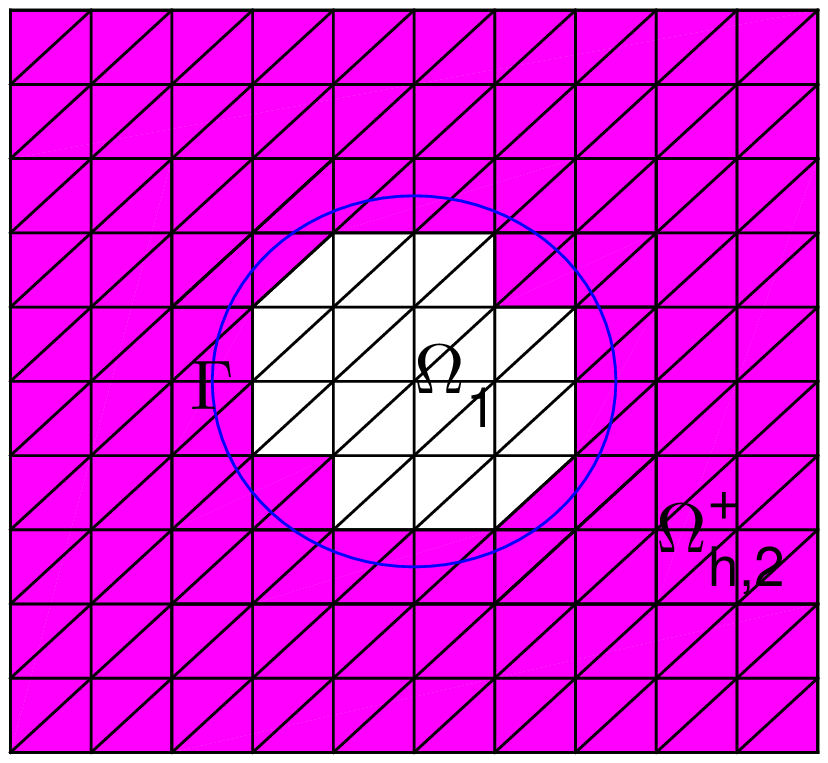}
\caption{Illustration of definitions of set $G^{\Gamma}_{h}$, $\Omega^{-}_{h,1}$, $\Omega^{-}_{h,2}$, $\Omega^{+}_{h,1}$ and $\Omega^{+}_{h,2}$. Left figure: elements in $G^{\Gamma}_{h}$(magenta area), $\Omega^{-}_{h,1}$ and $\Omega^{-}_{h,2}$ (cobalt blue area). Center figure: elements in $\Omega^{+}_{h,1}$ (magenta area). Right figure: elements in $\Omega^{+}_{h,2}$ (magenta area).}
\label{fig_domains}
\end{figure}

Let $\F_{h,i}$, $\F_{h,i}^{nc}$ and $\F^{cut}_{h,i}$ denote the set of all the edges of $\T_{h,i}$, the set of uncut edges of $\T_{h,i}$ and the set of cut segments contained in $\Omega_i$ respectively. Here $\F_{h,i}^{nc}$ and $\F^{cut}_{h,i}$ are given by
$$\F_{h,i}^{nc}:=\{e\in \F_{h,i}: e=\partial K_l\cap\partial K_r, K_l,K_r \in\mathcal{T}_{h,i}, \text{and} \ e \subset \Omega_i\},$$
and
$$\F^{cut}_{h,i}{:=}\{\widetilde{e}= e\cap \Omega_i: e=\partial K_l \cap \partial K_r, K_l,K_r \in G^{\Gamma}_h\}.$$

Finally, the set of all the edges of $G^{\Gamma}_h$ restricted to the interior of $\Omega^{+}_{h,i}$ is defined by $\F^{\Gamma}_{h,i}{:=}\{e=\partial K_l\cap \partial K_r: K_l,K_r\in \mathcal{T}_{h,i}, K_l\ \text{or}\ K_r \in G^{\Gamma}_h\}$. {See Figure~\ref{fig_edges} for an illustration of definitions of $\F^{nc}_{h,1}$, $\F^{cut}_{h,1}$ and $\F^{\Gamma}_{h,1}$, respectively.}

\begin{figure}[htp]
\centering
\includegraphics[scale=0.4]{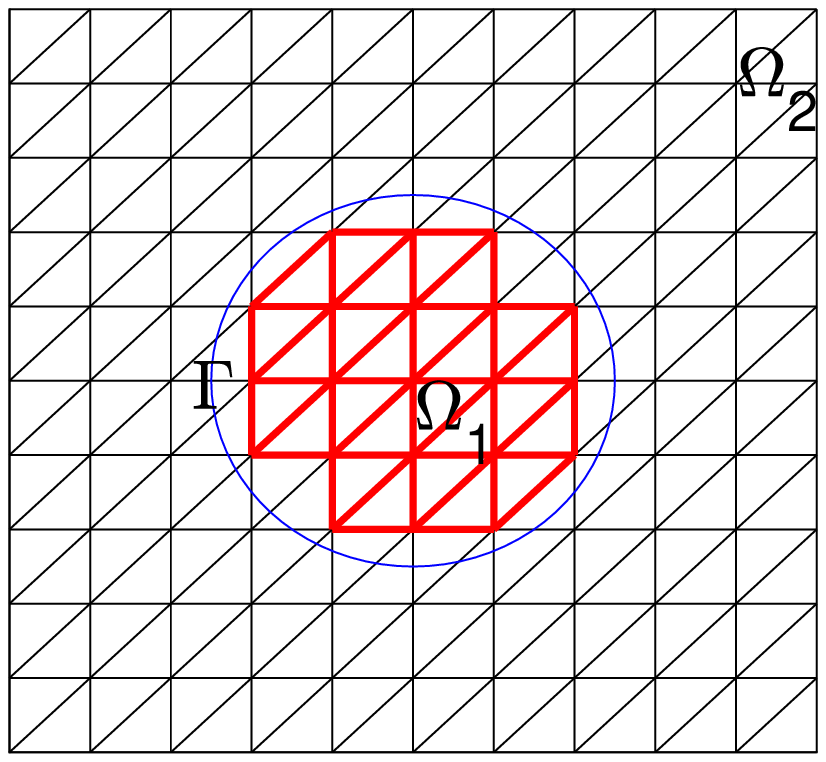}
\includegraphics[scale=0.4]{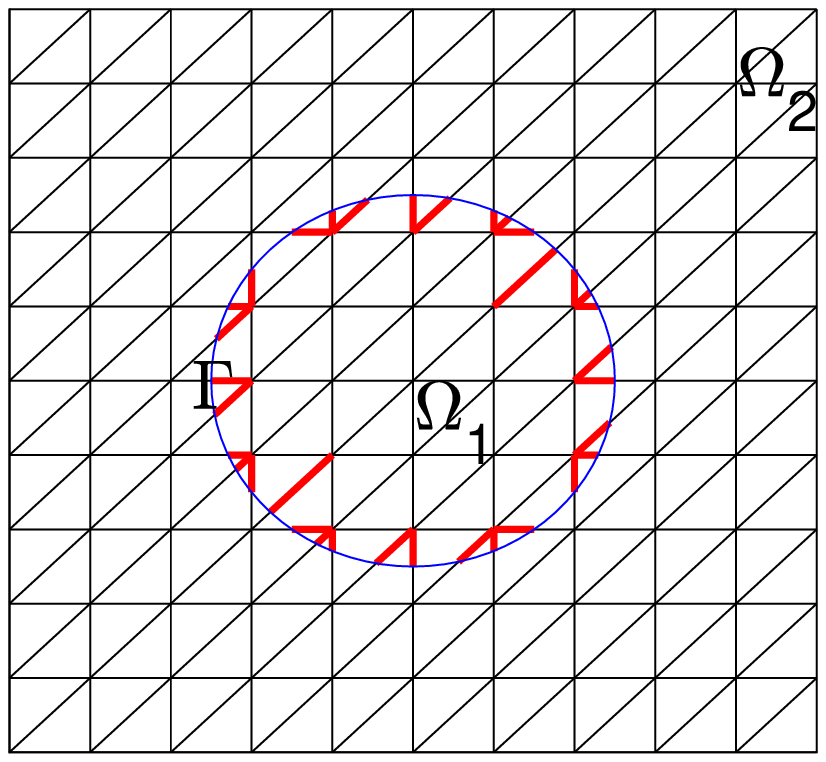}
\includegraphics[scale=0.4]{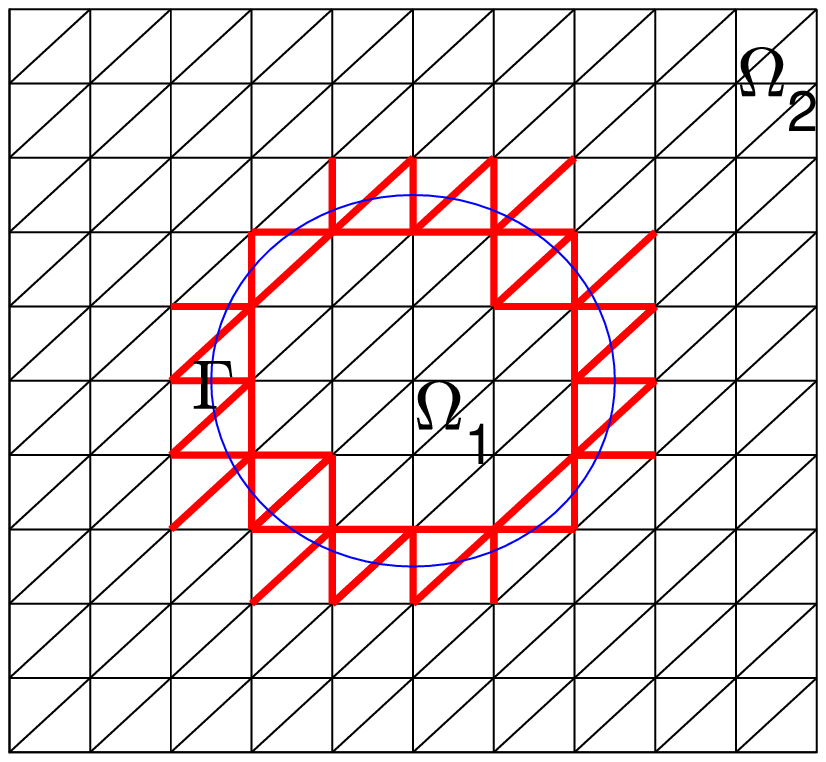}
\caption{Illustration of definitions of set $\F^{nc}_{h,1}$, $\F^{cut}_{h,1}$ and $\F^{\Gamma}_{h,1}$. Left figure: edges in $\F^{nc}_{h,1}$ (red lines). Center figure: edges in $\F^{cut}_{h,1}$ (red lines). Right figure: edges in $\F^{\Gamma}_{h,1}$ (red lines).}
\label{fig_edges}
\end{figure}

In this paper, we make the following assumptions (see \cite{Hansbo2014A}):
\begin{itemize}
\item [(A1)] It is assumed that the interface intersects the boundary of each triangle at most two points and each (open) edge at most once, or that the interface coincides with one edge of the element.
\item [(A2)] We assume that for each $K\in G^{\Gamma}_h$ there exists one $K'\subset \Omega_i,i=1,2$ such that $K'$ shares an edge or a vertex with $K$. That is to say, if $z\in \Omega_i$ is a vertex of $K$ and $\triangle_z$ denotes the patch of elements associated to $z$, i.e. $\triangle_z=\bigcup\{K: K\in {\T}^i_h, z\in \partial K\}$, then there exists an element $K'\subset\Omega_i$ such that $K'\in \triangle_z$.
\item [(A3)] It is assumed that the mesh coincides with the outer boundary $\partial\Omega$.
\end{itemize}

{Assumptions (A1) and (A2)} make the interface be well resolved by the mesh with an enough small mesh. Moreover, these two assumptions imply that the discrete approximation of the interface divides elements into simple shapes (two triangles or a triangle and a quadrilateral).

Now we assume that the velocity space is
$$V_i :=[\{v\in L^2(\Omega^{+}_{h,i}): v\in H^2(K),\ \forall K \in \mathcal{T}_{h,i}\}]^2,  i=1,2,$$
and {the pressure space is}
$$Q_i :=\{p\in L^2(\Omega^{+}_{h,i}): p\in H^1({K}),\ \forall K \in \mathcal{T}_{h,i}\},  i=1,2.$$
Further, we define the weak velocity space by
$$V := \{\mathbf{v}=({\mathbf{v}_1}|_{\Omega_1},{\mathbf{v}_2}|_{\Omega_2}): \mathbf{v}_i\in V_i, i=1,2, \mathbf{v}|_{\partial\Omega}=0\},$$
and the weak pressure space by
$$Q :=\{p=({p_1}|_{\Omega_1},{p_2}|_{\Omega_2}): p_i\in {Q_i}, p\in L^2_{\mu}(\Omega)\},$$
where ${L^2_{\mu}(\Omega)} :=\{q\in L^2(\Omega):(\mu^{-1}q,1)_{\Omega_1\cup\Omega_2}=0\}$.
We now introduce the couple of inf-sup stable spaces on the extended sub-domain $\Omega^{+}_{h,i}$,
\begin{equation}
\begin{aligned}
V_{h,i} :=\Big[\{ &v\in L^2(\Omega^{+}_{h,i}): v|_K\in S_h(K) \ if \ K\in{\T}_{h,i};\\
& \ if \ e=\partial K_l\cap\partial K_r,\ K_l,K_r\in {\T}_{h,i}, \ then \ \int_e [v]\mathrm{d}s=0;\\
& \ if \ e=\partial K \cap \partial \Omega,\ K\in \mathcal{T}_{h,i}, \ then \ \int_e v\mathrm{d}s =0\}\Big]^2, i=1,2,
\end{aligned}
\end{equation}
with $S_h(K){:=}\text{span}\{\phi_l: \phi_l \in P_1(K), \frac{1}{|e_m|}\int_{e_m}\phi_l \mathrm{d}s=\delta_{lm}, e_m{\subset} \partial K,l,m=1,2,3\},$
and
$$Q_{h,i}:=\{p\in L^2(\Omega^{+}_{h,i}): p|_{K}\in P_0(K), \forall {K}\in \T_{h,i}\}.$$
Then we {define} a couple of finite element spaces. Let $V_h$ be the extended velocity space of nonconforming piecewise linear polynomials defined on $\T_h$ as follows:
$$V_h :=\{\mathbf{v}=({\mathbf{v}_1}|_{\Omega_1},{\mathbf{v}_2}|_{\Omega_2}): \mathbf{v}_i\in {V_{h,i}}, i=1,2\},$$
and $Q_h$ be the extended pressure space of piecewise constant functions defined on $\T_h$ as follows:
$$Q_h :=\{p=({p_1}|_{\Omega_1},{p_2}|_{\Omega_2}): p_i\in {Q_{h,i}},i=1,2,\  p\in L^2_{\mu}(\Omega)\}.$$
The above extended finite element spaces double the degrees of freedom in the elements which are cut by the interface. Clearly, $V_h\nsubseteq V$ and $Q_h\subseteq Q$.

Recalling the definition of {$\F_{h,i}^{cut}$}, for each edge $\widetilde{e}\in\F^{cut}_{h,i}$, there exist two cut elements $K_l, K_r\in G^{\Gamma}_h$ and $K^i_j=K_j\cap \Omega_i, j=l,r$ such that $\widetilde{e}=K^i_l\cap K^i_r$. Define jumps of $\mathbf{v}\in V+V_h$ and $p\in Q$, and jump of the flux of $\mathbf{v}$ by $[\mathbf{v}]=\mathbf{v}|_{K^i_l}-\mathbf{v}|_{K^i_r}$, $[p]=p|_{K^i_l}-p|_{K^i_r}$ and $[\nabla \mathbf{v}\cdot \mathbf{n}_{\widetilde{e}}]=\nabla \mathbf{v}|_{K^i_l}\cdot \mathbf{n}_{\widetilde{e}}-\nabla \mathbf{v}|_{K^i_r}\cdot \mathbf{n}_{\widetilde{e}}$, respectively, provided that $\mathbf{n}_{\widetilde{e}}$ is a unit normal vector to the edge $\widetilde{e}$ pointing from $K_l^i$ to $K_r^i$. Similarly, for $e\in \F^{nc}_{h,i}$, we can also define the jumps of $\mathbf{v}\in V+V_h$ and $p\in Q$ on $e$ and a unit normal vector to the edge $e$ by $\mathbf{n}_e$. In particular, we note that $[\mathbf{v}]=\mathbf{v}|_K$ for {$e=\partial K\cap\partial\Omega$} with $K\in {\T}_{h,i}$. Further, we define jump $[\nabla \mathbf{v}]=\nabla \mathbf{v}|_{K_l}-\nabla \mathbf{v}|_{K_r}$ for $\mathbf{v}\in V+V_h$ on each edge $e\in\F^{\Gamma}_{h,i}$.

For any $\mathbf{v}\in V+V_h$ and weights $w_i,i=1,2$, we define the averages $\{\mathbf{v\}}_{w}$ and $\{\mathbf{v}\}^{w}$ on the interface $\Gamma$ as follows:
$$\{\mathbf{v}\}_{w}=w_1\mathbf{v}_1|_{\Gamma}+w_2\mathbf{v}_2|_{\Gamma}, \ \{\mathbf{v}\}^{w}=w_2\mathbf{v}_1|_{\Gamma}+w_1\mathbf{v}_2|_{\Gamma},$$
where $\mathbf{v}_i=\mathbf{v}|_{\Omega_i},i=1,2$.
Similarly, for any $p\in Q$ and weights $w_i,i=1,2$, we define the averages $\{p\}_{w}$ and $\{p\}^{w}$ on the interface $\Gamma$ as follows:
$$\{p\}_{w}=w_1p_1|_{\Gamma}+w_2p_2|_{\Gamma}, \ \{p\}^{w}=w_2p_1|_{\Gamma}+w_1p_2|_{\Gamma},$$
where $p_i=p|_{\Omega_i},i=1,2$. In this paper, we use the so-called ``harmonic weights" as adopted by \cite{Burman2006,Cai2011Discontinuous,Ern2009,hdw2020,hxx2020,hwx2017},
$$w_1=\frac{\mu_2}{\mu_1+\mu_2},\ w_2=\frac{\mu_1}{\mu_1+\mu_2}.$$
It is clear that
$$\{\mu\}_w=2\mu_iw_i=\frac{2\mu_1\mu_2}{\mu_1+\mu_2}.$$
Likewise, we denote the arithmetic averages $\{\mathbf{v}\}_k$, $\{p\}_k$ on the cut edges {$\widetilde{e}\in \F^{cut}_{h,i}$} by
$$\{\mathbf{v}\}_{k}=\frac{1}{2}\mathbf{v}_l|_{\widetilde{e}}+\frac{1}{2}\mathbf{v}_r|_{\widetilde{e}},\ \{p\}_{k}=\frac{1}{2}p_l|_{\widetilde{e}}+\frac{1}{2}p_r|_{\widetilde{e}},$$
where $\mathbf{v}_j=\mathbf{v}|_{K^i_j}$, $p_j=p|_{K^i_j},j=l,r$ provided $\widetilde{e}=\partial K^i_l\cap \partial K^i_r$, $K^i_j=K_j\cap \Omega_i$ for $K_l,K_r\in G^{\Gamma}_h$.

{Now we} propose the following Nitsche method {to approximate problem~\eqref{eP} with assumptions (A1)-(A3)}: find $(\mathbf{u}_h,p_h)\in V_h\times Q_h$ such that
\begin{equation}\label{numer_sol}
B_h[(\mathbf{u}_h,p_h),(\mathbf{v}_h,q_h)]=L_h(\mathbf{v}_h), \forall (\mathbf{v}_h,q_h)\in V_h\times Q_h,
\end{equation}
where
$$B_h[(\mathbf{u}_h,p_h),(\mathbf{v}_h,q_h)]=A_h(\mathbf{u}_h,\mathbf{v}_h)+b_h(p_h,\mathbf{v}_h)-b_h(q_h,\mathbf{u}_h)+J_p(p_h,q_h),$$
and
$$A_h(\mathbf{u}_h,\mathbf{v}_h)=a_h(\mathbf{u}_h,\mathbf{v}_h)+J_\mathbf{u}(\mathbf{u}_h,\mathbf{v}_h).$$
Here, $a_h(\cdot,\cdot)$, $J_{\mathbf{u}}(\cdot,\cdot)$ are the bilinear forms on $(V+V_h)\times (V+V_h)$ defined by
\begin{equation}
\begin{aligned}
a_h(\mathbf{u},\mathbf{v})&=\sum^2_{i=1}\sum_{K\in {\T}_{h,i}}\int_{K\cap\Omega_i}\mu_i\nabla \mathbf{u}\cdot \nabla \mathbf{v}-\sum_{K\in G^{\Gamma}_h}\int_{\Ga_K} \Big(\av{\mu\na \mathbf{u}\cdot\bn}_w\jm{\mathbf{v}}\\
\\
&\quad +\jm{\mathbf{u}}\av{\mu\na \mathbf{v}\cdot\bn}_w\Big)\label{eah}+ \sum_{K\in G^{\Gamma}_h} \int_{\Ga_K}\frac{\ga_0\{\mu\}_w}{h} \jm{\mathbf{u}}\jm{\mathbf{v}}\\
&\quad+\sum^2_{i=1}\sum_{\widetilde{e}\in \F^{cut}_{h,i}}\Big(\int_{\widetilde{e}}(-\av{\mu_i\na \mathbf{u}\cdot\bn_{\widetilde{e}}}_k\jm{\mathbf{v}}-\av{\mu_i\na \mathbf{v}\cdot\bn_{\widetilde{e}}}_k\jm{\mathbf{u}})\\
\\
&\quad +\ga_i |\widetilde{e}|^{-1}\mu_i\int_{\widetilde{e}} [\mathbf{u}][\mathbf{v}]\Big),
\end{aligned}
\end{equation}
and
\begin{equation}
J_\mathbf{u}(\mathbf{u},\mathbf{v})=\sum^2_{i=1}\left(\sum_{e\in \F^{\Gamma}_{h,i}}|e|\mu_i\int_e[\nabla \mathbf{u}]\cdot[\nabla \mathbf{v}]+\sum_{\widetilde{e}\in\F^{cut}_{h,i} }\int_{\widetilde{e}} |\widetilde{e}|\mu_i[\nabla \mathbf{u}\cdot \mathbf{n}_{\widetilde{e}}][\nabla \mathbf{v}\cdot \mathbf{n}_{\widetilde{e}}]\right),
\end{equation}
$b_h(\cdot,\cdot)$ is defined in $Q\times(V+V_h)$ by
{
\begin{equation}
\begin{aligned}
b_h(p,\mathbf{v})&=-\sum^2_{i=1}\left(\sum_{K\in {\T}_{h,i}}\int_{K\cap\Omega_i}p\nabla\cdot \mathbf{v} -\sum_{\widetilde{e}\in\F^{cut}_{h,i} }\int_{\widetilde{e}} \{p\}_k[\mathbf{v}\cdot \mathbf{n}_{\widetilde{e}}]\right)\\
&\quad+ \sum_{K\in G^{\Gamma}_h}\int_{\Gamma_K}\{p\}_w[\mathbf{v}\cdot \mathbf{n}],
\end{aligned}
\end{equation}
}
$J_p(\cdot,\cdot)$ is defined in $Q\times Q$ by
\begin{equation}
J_p(p,q)=\sum^2_{i=1}\left(\sum_{e\in \F^{\Gamma}_{h,i}}|e|\int_e \mu_i^{-1}[p][q]+\sum_{\widetilde{e}\in \F^{cut}_{h,i}}|\widetilde{e}|\int_{\widetilde{e}} \mu_i^{-1}[p][q]\right),
\end{equation}
and $L_h(\cdot)$ is a linear form defined by
\begin{equation}
L_h(\mathbf{v})=\sum^2_{i=1}\int_{\Omega_i}fv+\sum_{K\in G^{\Gamma}_h}\int_{\Gamma_K}\sigma\kappa\{\mathbf{v}\cdot \mathbf{n}\}^w,
\end{equation}
where $\gamma_0$, $\gamma_1$ and $\gamma_2$ are sufficiently large, positive parameters to be chosen.

\begin{remark}The stabilization terms $J_\mathbf{u}$, $J_p$ {are added in our method}. The term $J_\mathbf{u}(\mathbf{u}_h,\mathbf{v}_h)$ is added to ensure the coercivity of $A_h(\cdot,\cdot)$ and the term $J_p(p_h,q_h)$ is used to guarantee the inf-sup stability of the method.
\end{remark}

For any $\mathbf{u}\in {[H^2(\Omega_1\cup\Omega_2)\cap H_0^1(\Omega)]}^2$ and $p\in H^1(\Omega_1\cup\Omega_2)\cap L^2_{\mu}(\Omega)$, it is easy to see that the following equality {holds},
\begin{equation}\label{orth}
\begin{aligned}
&B_h[(\mathbf{u}-\mathbf{u}_h,{p-p_h}),(\mathbf{v}_h,q_h)]\\
&=\sum^2_{i=1}\sum_{e\in \F_{h,i}^{nc}}\left(\int_e\mu_i \nabla \mathbf{u}\cdot \mathbf{n}_e[\mathbf{v}_h]-\int_e p[\mathbf{v}_h\cdot \mathbf{n}_e]\right),\ \forall (\mathbf{v}_h,q_h)\in V_h\times Q_h.
\end{aligned}
\end{equation}
Now we introduce the norms. For $\mathbf{v}\in V+V_h$, we define
\begin{equation}\label{norm1}
\begin{aligned}
\norme{\mathbf{v}}^2 :=&\sum^2_{i=1}\sum_{K\in {\T}_{h,i}}\norm{\sqrt{\mu_i}\nabla \mathbf{v}}^2_{0,K\cap\Omega_i}+\frac{\{\mu\}_w}{h}\sum_{K\in G^{\Gamma}_h}\norm{[\mathbf{v}]}^2_{0,\Gamma_K}\\
&+\sum^2_{i=1}\sum_{\widetilde{e}\in \F^{cut}_{h,i}} |\widetilde{e}|^{-1}\mu_i\norm{[\mathbf{v}]}^2_{0,\widetilde{e}}+J_\mathbf{u}(\mathbf{v},\mathbf{v}),
\end{aligned}
\end{equation}
and
\begin{equation}\label{norm2}
\begin{aligned}
\norme{\mathbf{v}}^2_V :=&\norme{\mathbf{v}}^2+\frac{h}{\{\mu\}_w}\sum_{K\in G^{\Gamma}_h}\norm{\{\mu\nabla \mathbf{v}\cdot \mathbf{n}\}_w}_{0,\Gamma_K}^2\\
&+\sum^2_{i=1}\sum_{\widetilde{e}\in \F^{cut}_{h,i}}|\widetilde{e}|\mu_i\norm{\{\nabla \mathbf{v}\cdot \mathbf{n}_{\widetilde{e}}\}_k}^2_{0,\widetilde{e}}.
\end{aligned}
\end{equation}
For $(\mathbf{v},p)\in (V+V_h)\times Q$, we define
\begin{equation}\label{vp_v}
\norme{(\mathbf{v},p)}^2 :=\norme{\mathbf{v}}^2+\norm{\mu^{-1/2}p}^2_{0,\Omega_1\cup\Omega_2}+J_p(p,p),
\end{equation}
and
\begin{equation}
\begin{aligned}
\norme{(\mathbf{v},p)}_V^2 :=&\norme{\mathbf{v}}_V^2+\norm{\mu^{-1/2}p}^2_{0,\Omega_1\cup\Omega_2}+J_p(p,p)\\
&+\frac{h}{\{\mu\}_w}\sum_{K\in G^{\Gamma}_h}\norm{\{p\}_w}^2_{0,\Gamma_K}+\sum^2_{i=1}\sum_{\widetilde{e}\in \F^{cut}_{h,i}}|\widetilde{e}|\mu_i^{-1}\norm{\{p\}_k}^2_{0,\widetilde{e}}.
\end{aligned}
\end{equation}

\section{Preliminary}\label{prepare} In this section, we will give some preliminaries for the later error analysis. Firstly, we give the following {lemma which} is proved in~\cite{Guzm2015A}.

\begin{lemma}\label{sta2}
If $\widetilde{e}\in \F^{cut}_{h,i}$, that is to say, $\widetilde{e}=\partial K^i_l\cap\partial K^i_r$, where $K_l,\ K_r\in G^{\Gamma}_h$ and $K^i_j =K_j\cap \Omega_i, j=l,r$, {and for sufficiently small $h$}, then there exists a constant $\theta >0$ such that
$$|\widetilde{e}|^2\leq \theta \max_{j=l,r}|K_j^i|.$$
The constant $\theta$ depends on the $C^2$-norm of the parametrization of $\Gamma$ and the shape regularity of $K_l$ and $K_r$.
\end{lemma}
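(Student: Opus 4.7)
The plan is a local geometric argument that replaces the curved interface by its tangent at $P:=\Gamma\cap e$ and performs a case split on the angle this tangent makes with $e=\partial K_l\cap\partial K_r$. Place coordinates so that $P$ is the origin, $e$ lies on the $x$-axis, and the $\Omega_i$-endpoint $A$ of $\widetilde{e}$ sits at $(-L,0)$, where $L:=|\widetilde{e}|$. Under assumption (A1), $\Gamma$ meets $e$ only at $P$ and transversally, while the $C^2$-regularity of $\Gamma$ guarantees that for $h$ small enough the arc $\Gamma\cap(K_l\cup K_r)$ lies in a tube of width $C_1 h^2$ about the tangent line $\ell_P$ to $\Gamma$ at $P$, with $C_1$ controlled by the $C^2$-norm of the parametrization.

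Let $H_A$ be the closed half-plane bounded by $\ell_P$ containing $A$, and set $K_j^{\sharp}:=K_j\cap H_A$, a polygon (triangle or quadrilateral) sharing the segment $\widetilde{e}$ with $K_j^i=K_j\cap\Omega_i$. Since $K_j^{\sharp}\triangle K_j^i$ is contained in the $C_1h^2$-tube around $\ell_P$ intersected with $K_j$, it has area $O(h^3)$. It therefore suffices to establish the polygonal lower bound $\max_{j=l,r}|K_j^{\sharp}|\gtrsim L^2$, with the implicit constant depending only on shape regularity and the $C^2$-norm of $\Gamma$; the stated curved estimate then follows for $h$ small by absorbing the $O(h^3)$ perturbation.

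I would prove the polygonal bound by a split on the angle $\alpha\in(0,\pi/2]$ between $\ell_P$ and $e$. If $\sin\alpha\geq\alpha_0$ for a fixed threshold $\alpha_0>0$ depending on shape regularity, then the ball of radius $\tfrac{1}{2}L\sin\alpha_0$ centred at $A$ misses $\ell_P$, and its intersection with $K_j$ contains a sector of opening angle bounded below by shape regularity, yielding $|K_j^{\sharp}|\gtrsim L^2$ uniformly in $j$. If instead $\sin\alpha<\alpha_0$, then $\ell_P$ is nearly parallel to $e$; using that $A\in\Omega_i$ while $B\in\Omega_{3-i}$ (the other endpoint of $e$), the transversal crossing selects the side of $e$ on which $\ell_P$ exits $K_j$ close to $B$, and on that side $K_j^{\sharp}$ covers all but a thin sliver of $K_j$, so $|K_j^{\sharp}|\gtrsim h^2\geq L^2$ using $L\leq|e|\lesssim h$.

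The main obstacle will be the small-angle regime, where genuinely one of $K_l^i, K_r^i$ can have area only of order $L^2\sin\alpha$, arbitrarily smaller than $L^2$. The proof must therefore exploit the maximum in a careful way: depending on the local orientation of $\Gamma$ at $P$, one must identify the favourable triangle and verify that on that side $\ell_P$ exits through the edge opposite $A$ rather than through an edge adjacent to $A$. Once this geometric identification is made, the $O(h^3)$ curvature perturbation is absorbed by the lower bound (of order $L^2$ in the non-degenerate case and of order $h^2$ in the small-angle case) provided $h$ is sufficiently small, which is precisely the hypothesis of the lemma.
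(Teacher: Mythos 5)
The paper does not actually prove this lemma; it is imported verbatim from the cited reference \cite{Guzm2015A}. Your strategy --- flatten $\Gamma$ to its tangent $\ell_P$ at the crossing point, split on the angle $\alpha$ between $\ell_P$ and $e$, use a shape-regular sector at the vertex $A$ in the transversal regime, and identify the favourable triangle in the near-tangential regime --- is the right one and matches the geometric argument behind the cited result. You also correctly isolate the genuinely delicate point: in the near-tangential regime one of the two pieces really can have area only $O(L^2\sin\alpha)$ with $L=|\widetilde{e}|$, so the maximum must be exploited by locating the side on which $\ell_P$ exits through the edge opposite $A$.

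There is, however, a step that fails as written: the absorption of the curvature correction. You bound $|K_j^{\sharp}\,\triangle\,K_j^i|$ by $O(h^3)$ (a tube of width $C_1h^2$ about $\ell_P$ crossing an element of diameter $h$) and claim this is dominated by the transversal-case lower bound of order $L^2$ ``provided $h$ is sufficiently small''. But $L$ is not comparable to $h$: the interface may cross $e$ arbitrarily close to the vertex $A$, so $L/h$ can be arbitrarily small and no smallness condition on $h$ alone yields $h^3\lesssim L^2$. The argument must be localized: the sector you use sits in $B(A,\tfrac12 L\sin\alpha_0)$, any point of $\Gamma$ that could invade it lies at arc length $O(L)$ from $P$, and over that arc length the deviation of $\Gamma$ from $\ell_P$ is only $O(L^2)$, which is indeed beaten by the distance $\geq\tfrac12 L\sin\alpha_0$ from the sector to $\ell_P$ once $L\leq h$ is small. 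With that scale-$L$ tube estimate in place of your global $O(h^3)$ area bound, the transversal case closes (the near-tangential case is unaffected, since there the lower bound is $\gtrsim h^2$ and $O(h^3)$ suffices). Two smaller omissions: assumption (A1) allows $\Gamma$ to miss the open edge $e$ entirely (then $\widetilde{e}=e$, there is no crossing point $P$, and your coordinate set-up does not apply) and allows tangential contact at $P$, so ``meets $e$ transversally'' is an unstated hypothesis rather than a consequence of (A1); both degenerate configurations turn out to be favourable, but they need to be dispatched explicitly before the case split on $\alpha$ makes sense.
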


We also need the following trace inequality for interface edges and its proof can be found in \cite{Wu2010An}.
\begin{lemma}\label{tr_intf}
{Suppose $h$ be sufficiently small, then for each $K\in G^\Gamma_h$ and $v\in H^1(K)$,} it holds
$$\norm{v}_{0,\Gamma_K}\lesssim h^{-1/2}_K\norm{v}_{0,K}+\norm{v}^{1/2}_{0,K}\norm{\nabla v}^{1/2}_{0,K}.$$
Further, if $v\in P_1(K)$, then
$$\norm{v}_{0,\Gamma_K}\lesssim h^{-1/2}_K\norm{v}_{0,K}.$$
\end{lemma}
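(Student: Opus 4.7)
The plan is to exploit the $C^2$-regularity of $\Gamma$ together with the smallness of $h$ to reduce the curved trace inequality on $\Gamma_K$ to a one-dimensional integration combined with the standard multiplicative trace inequality on $K$. First I introduce local coordinates on $K$ whose axes align with the tangent and normal directions to $\Gamma$ at a fixed point of $\Gamma_K$. The bound on the curvature of $\Gamma$ coming from $\Gamma\in C^2$ then guarantees that $\Gamma_K$ can be written as the graph $x_2=\phi(x_1)$ of a $C^2$ function with $\phi(0)=\phi'(0)=0$ and $\|\phi''\|_{L^\infty}$ bounded independently of $K$. In particular, on the parameter interval $I$ of $x_1$-values, of length at most $Ch_K$, both $|\phi'|$ and the arc-length weight $\sqrt{1+(\phi')^2}$ stay bounded by an absolute constant once $h$ is small enough.

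Next, I parametrize $\Gamma_K$ by $x_1\in I$ to obtain $\norm{v}_{0,\Gamma_K}^2 \le C\int_I v(x_1,\phi(x_1))^2\,dx_1$. For each $x_1\in I$, the fundamental theorem of calculus in the $x_2$-direction yields
\[
v(x_1,\phi(x_1))^2 = v(x_1,x_2^*)^2 - 2\int_{\phi(x_1)}^{x_2^*} v(x_1,x_2)\,\partial_{x_2}v(x_1,x_2)\,dx_2,
\]
where $x_2^*=x_2^*(x_1)$ is chosen so that the vertical segment joining $(x_1,\phi(x_1))$ to $(x_1,x_2^*)$ lies in $K$ and its far endpoint lies on $\partial K$. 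Integrating in $x_1$ and applying Cauchy--Schwarz bounds the double integral by $\norm{v}_{0,K}\norm{\nabla v}_{0,K}$.

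It remains to control the boundary term $\int_I v(x_1,x_2^*)^2\,dx_1$, taken over a piecewise smooth portion of $\partial K$ of total length $O(h_K)$. A scaled trace inequality on $K$, obtained by affine pullback to the reference triangle and the usual multiplicative trace estimate there, gives $\int_I v(x_1,x_2^*)^2\,dx_1 \lesssim h_K^{-1}\norm{v}_{0,K}^2 + \norm{v}_{0,K}\norm{\nabla v}_{0,K}$. Combining the two estimates yields $\norm{v}_{0,\Gamma_K}^2 \lesssim h_K^{-1}\norm{v}_{0,K}^2 + \norm{v}_{0,K}\norm{\nabla v}_{0,K}$, and the first claim follows via $(a+b)^{1/2}\le a^{1/2}+b^{1/2}$. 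The $P_1$ sharpening is then immediate after inserting the standard inverse estimate $\norm{\nabla v}_{0,K}\lesssim h_K^{-1}\norm{v}_{0,K}$ into the right-hand side.

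The main obstacle is geometric: ensuring the vertical segments from $(x_1,\phi(x_1))$ to $\partial K$ remain inside $K$ and terminate on a uniformly nondegenerate portion of $\partial K$ as $x_1$ ranges over $I$. When $\Gamma$ cuts $K$ very close to a vertex, a single ``target edge'' for $x_2^*$ need not work for all $x_1\in I$, and one must split $I$ into subintervals pointing towards different edges of $K$ (a pyramid-type decomposition from a fixed interior reference point of $K$, using the shape regularity of the triangulation and assumption (A1)) or, equivalently, replace the pointwise reference height by an averaged $x_2$-slice and invoke the Poincar\'e inequality on the resulting vertical strip. Either variant yields the same scaling and completes the argument.
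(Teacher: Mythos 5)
Your argument is correct and is essentially the standard proof of this trace inequality; the paper itself does not prove the lemma but only cites \cite{Wu2010An}, where the same strategy (local graph representation of $\Gamma_K$ using the $C^2$ curvature bound, a fundamental-theorem-of-calculus sweep in a fixed transversal direction, and the scaled multiplicative trace inequality on $\partial K$) is used. One remark: the ``main obstacle'' you describe at the end is not actually an obstacle, since $K$ is a triangle and hence convex, so each segment from $(x_1,\phi(x_1))$ to its first exit point on $\partial K$ automatically lies in $K$ and the vertical lines sweep a subset of $K$ with multiplicity one; the only care needed for the boundary term is the elementary observation that $dx_1\le ds$ on $\partial K$, so $\int_I v(x_1,x_2^*)^2\,dx_1\le\norm{v}_{0,\partial K}^2$, after which the scaled trace inequality applies directly.
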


In order to estimate the error of our method, the following trace inequality is needed for the cut segments totally contained in $\Omega_i$. We have proved in \cite{hxx2020}.
\begin{lemma}\label{tr_cut}
Suppose that $v\in H^2(K)$ for $K\in G_h^\Gamma$. For $\widetilde{e }\in \F^{cut}_{h,i}$, if $e\subseteq \partial K$ such that $\widetilde{e}\subseteq e$. Then we have
$$\frac{1}{|\widetilde{e}|}\norm{v}^2_{0,\widetilde{e}}\leq C\left(\frac{1}{h^2_K}\norm{v}^2_{0,K}+\norm{\nabla v}^2_{0,K}+h^2_K|\nabla v|^2_{1,K}\right).$$
\end{lemma}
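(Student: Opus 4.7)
The main obstacle to overcome is that $|\widetilde e|$ can be arbitrarily small (the interface may cut the edge $e$ arbitrarily close to a vertex of $K$), so the naive route ``restrict to $\widetilde e$, then apply a standard trace inequality'' yields the uncontrollable factor $|e|/|\widetilde e|$. The plan is to carry out an averaging argument along the \emph{entire} edge $e$ that kills this small factor, and only afterwards invoke the standard scaled trace inequality on $K$.

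First, since $v\in H^2(K)$ and $e$ is a one-dimensional segment, the trace of $v$ on $e$ lies in $H^{3/2}(e)\hookrightarrow C^0(\overline e)$; in particular $v$ is absolutely continuous along $e$. Parametrise $e$ by arc length $s$, and for any $x\in e$ and $y\in\widetilde e\subseteq e$ write the fundamental theorem of calculus
\[
v(y)=v(x)+\int_{x}^{y}\partial_s v(s)\rd s,\qquad \partial_s v=\nabla v\cdot\tau_e,
\]
where $\tau_e$ denotes the unit tangent of $e$. Squaring, using $(a+b)^2\le 2a^2+2b^2$, and applying Cauchy--Schwarz together with $|y-x|\le |e|$ yields the pointwise bound
\[
v(y)^2\le 2\,v(x)^2+2|e|\int_{e}(\partial_s v)^2\rd s.
\]

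Now I would integrate this inequality in $y$ over $\widetilde e$, then in $x$ over $e$, and divide through by $|e|\,|\widetilde e|$. The crucial point is that the factor $|\widetilde e|$ cancels from the first term on the right, producing
\[
\frac{1}{|\widetilde e|}\norm{v}_{0,\widetilde e}^{2}\le \frac{2}{|e|}\norm{v}_{0,e}^{2}+2|e|\,\norm{\partial_s v}_{0,e}^{2}.
\]
The ``small segment length'' has been traded for the full edge length $|e|\eqsim h_K$; this is the decisive step and is where the argument bypasses the unfitted geometry.

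To finish, the standard scaled trace inequality on the triangle $K$ (combined with Young's inequality to absorb the mixed $L^2$--$H^1$ cross terms that arise in the trace estimate for $\nabla v$) gives $\norm{v}_{0,e}^{2}\lesssim h_K^{-1}\norm{v}_{0,K}^{2}+h_K\norm{\nabla v}_{0,K}^{2}$ and $\norm{\partial_s v}_{0,e}^{2}\lesssim h_K^{-1}\norm{\nabla v}_{0,K}^{2}+h_K|\nabla v|_{1,K}^{2}$. Substituting these two bounds into the previous display and using $|e|\eqsim h_K$ delivers exactly the stated inequality. Note that the hypothesis $v\in H^2(K)$ is used precisely to guarantee that $\nabla v\in H^1(K)$, which is what makes the second trace bound available.
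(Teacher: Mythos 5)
Your proof is correct: the key step of trading the uncontrollable factor $|e|/|\widetilde e|$ for an $L^\infty$-type bound over the whole edge via the fundamental theorem of calculus, i.e. $|\widetilde e|^{-1}\norm{v}^2_{0,\widetilde e}\lesssim |e|^{-1}\norm{v}^2_{0,e}+|e|\norm{\partial_s v}^2_{0,e}$, followed by the scaled trace inequality on $K$ applied to $v$ and to $\nabla v$, is exactly the standard argument for this estimate. The paper itself does not reproduce a proof but defers to \cite{hxx2020}, where essentially this same reasoning is used, so your route matches the intended one.
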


Next, we give some properties of $A_h(\cdot,\cdot)$. The proof of Lemma~\ref{ah-coer} and Lemma~\ref{continu_ah} can be obtained from \cite{hxx2020}.
\begin{lemma}\label{ah-coer} Assuming that $h$ is small enough, the bilinear discrete form $A_h(\cdot,\cdot)$ is coercive on $V_h$ provided that $\gamma_i,i=0,1,2$ are chosen large enough. That is,
$$A_h(\mathbf{v},\mathbf{v})\geq \frac{1}{2}\norme{\mathbf{v}}^2,\ \forall \mathbf{v}\in V_h.$$
\end{lemma}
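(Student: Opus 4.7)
The plan is to set $\mathbf{u}=\mathbf{v}$, expand $A_h(\mathbf{v},\mathbf{v})$, and then absorb the four "wrong-sign'' consistency cross terms into the positive squared terms via Young's inequality combined with trace estimates and the ghost penalty $J_\mathbf{u}$. More precisely, the positive part of $A_h(\mathbf{v},\mathbf{v})$ is exactly
\ban{
S(\mathbf{v}) := \sum_{i=1}^2\sum_{K\in\T_{h,i}}\norm{\sqrt{\mu_i}\na\mathbf{v}}^2_{0,K\cap\Om_i}+\frac{\ga_0\{\mu\}_w}{h}\sum_{K\in G_h^\Ga}\norm{[\mathbf{v}]}^2_{0,\Ga_K}+\sum_{i=1}^2\sum_{\widetilde{e}\in\F^{cut}_{h,i}}\ga_i\mu_i|\widetilde{e}|^{-1}\norm{[\mathbf{v}]}^2_{0,\widetilde{e}}+J_\mathbf{u}(\mathbf{v},\mathbf{v}),
}
and the remaining terms are $-2\sum_K\int_{\Ga_K}\{\mu\na\mathbf{v}\cdot\bn\}_w[\mathbf{v}]$ and $-2\sum_{i,\widetilde{e}}\int_{\widetilde{e}}\{\mu_i\na\mathbf{v}\cdot\bn_{\widetilde{e}}\}_k[\mathbf{v}]$. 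I would apply Young's inequality to each of these, writing, for small $\ep_1,\ep_2>0$,
\ban{
2|\langle\{\mu\na\mathbf{v}\cdot\bn\}_w,[\mathbf{v}]\rangle_{\Ga_K}|\le \frac{h}{\ep_1\{\mu\}_w}\norm{\{\mu\na\mathbf{v}\cdot\bn\}_w}^2_{0,\Ga_K}+\frac{\ep_1\{\mu\}_w}{h}\norm{[\mathbf{v}]}^2_{0,\Ga_K},
}
and analogously on cut edges with weight $|\widetilde{e}|\mu_i^{-1}$ versus $|\widetilde{e}|^{-1}\mu_i$. The jump parts on the right are then harmless: they are absorbed into $\tfrac{\ga_0\{\mu\}_w}{h}\norm{[\mathbf{v}]}^2_{0,\Ga_K}$ and $\ga_i\mu_i|\widetilde{e}|^{-1}\norm{[\mathbf{v}]}^2_{0,\widetilde{e}}$ once $\ga_0,\ga_1,\ga_2$ are chosen sufficiently large.

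The substantive work is estimating the flux norms independently of the viscosity jump and of the cut configuration. For the interface part I would exploit the identity $w_1\mu_1=w_2\mu_2=\tfrac{1}{2}\{\mu\}_w$ coming from the harmonic weights, so that
\ban{
\norm{\{\mu\na\mathbf{v}\cdot\bn\}_w}^2_{0,\Ga_K}\le \frac{\{\mu\}_w^2}{2}\sum_{i=1}^2\norm{\na\mathbf{v}_i\cdot\bn}^2_{0,\Ga_K}.
}
Since $\na\mathbf{v}|_K$ is piecewise constant on each side of $\Ga_K$, the polynomial trace inequality of Lemma~\ref{tr_intf} gives $\norm{\na\mathbf{v}_i\cdot\bn}^2_{0,\Ga_K}\ls h_K^{-1}\norm{\na\mathbf{v}_i}^2_{0,K}$. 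Analogously, Lemma~\ref{tr_cut} handles the cut-edge fluxes, yielding $|\widetilde{e}|\mu_i\norm{\{\na\mathbf{v}\cdot\bn_{\widetilde{e}}\}_k}^2_{0,\widetilde{e}}\ls\mu_i\norm{\na\mathbf{v}}^2_{0,K_l\cup K_r}$. After these estimates the flux parts are dominated by $\mu_i\norm{\na\mathbf{v}}^2_{0,K}$ on whole elements $K\in G_h^\Ga$, not on the possibly tiny sub-regions $K\cap\Om_i$.

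The main obstacle is exactly this last point: in general $|K\cap\Om_i|$ can be arbitrarily small, so one cannot directly bound $\mu_i\norm{\na\mathbf{v}}^2_{0,K}$ by $\norm{\sqrt{\mu_i}\na\mathbf{v}}^2_{0,K\cap\Om_i}$. This is where the ghost penalty $J_\mathbf{u}$, together with assumption (A2), enters. By (A2), every $K\in G_h^\Ga$ can be connected through a short chain of elements in $\T_{h,i}$ (sharing edges lying in $\F^{\Ga}_{h,i}$ or cut edges in $\F^{cut}_{h,i}$) to a well-shaped triangle $K'\subset\Om_i$ with $|K'|\eqsim h^2$. Iterating the identity $\na\mathbf{v}|_{K}=\na\mathbf{v}|_{K'}+\sum[\na\mathbf{v}]$ along this chain and using that each $|e|\mu_i\norm{[\na\mathbf{v}]}^2_{0,e}$ appears in $J_\mathbf{u}$, one obtains the robust stability bound
\ban{
\mu_i\norm{\na\mathbf{v}}^2_{0,K}\ls \norm{\sqrt{\mu_i}\na\mathbf{v}}^2_{0,K'}+J_\mathbf{u}(\mathbf{v},\mathbf{v})|_{\text{chain}},\qquad K'\subset\Om_i.
}
Summing this over all cut elements produces a bound by $C\,S(\mathbf{v})$ with constant independent of the position of $\Ga$ and of $\mu_1/\mu_2$.

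Finally, collecting the estimates yields $A_h(\mathbf{v},\mathbf{v})\ge (1-C(\ep_1+\ep_2))\,S(\mathbf{v})-\tfrac{C}{\ep_1\ga_0}S(\mathbf{v})-\tfrac{C}{\ep_2\min(\ga_1,\ga_2)}S(\mathbf{v})$. Choosing $\ep_1,\ep_2$ small and then $\ga_0,\ga_1,\ga_2$ large enough (thresholds depending only on shape regularity, the constants in Lemmas~\ref{sta2}--\ref{tr_cut}, and the $C^2$-bound on $\Ga$, but independent of $h$, $\mu_1,\mu_2$, and the cut geometry) gives $A_h(\mathbf{v},\mathbf{v})\ge\tfrac12 S(\mathbf{v})=\tfrac12\norme{\mathbf{v}}^2$, since $S(\mathbf{v})$ coincides with $\norme{\mathbf{v}}^2$ by definition~\eqref{norm1}.
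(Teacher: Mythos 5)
Your argument is correct and is essentially the intended one: the paper itself only defers this lemma to \cite{hxx2020}, and the proof there is exactly your route (Young's inequality on the consistency terms, the harmonic-weight identity $w_i\mu_i=\tfrac12\{\mu\}_w$ plus the trace bounds of Lemmas~\ref{tr_intf}--\ref{tr_cut} to control the fluxes on whole cut elements, and the chain/ghost-penalty argument via (A2) and $J_\mathbf{u}$ to return to the physical sub-regions). The only cosmetic slip is the final claim that $S(\mathbf{v})=\norme{\mathbf{v}}^2$: since your $S$ carries the factors $\gamma_0,\gamma_i$ it only satisfies $S(\mathbf{v})\geq\norme{\mathbf{v}}^2$ once $\gamma_i\geq 1$, which is all the conclusion requires.
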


\begin{lemma}\label{continu_ah} There exists a positive constants $ C_{A_1}$ such that
$$A_h(\mathbf{u},\mathbf{v})\leq C_{A_1}\norme{\mathbf{u}}_V \norme{\mathbf{v}}_V, \  \forall \mathbf{u},\mathbf{v}\in V.$$
Additionally, for $\mathbf{u}\in V$ and $\mathbf{v}\in V_h$, under assuming that $h$ is small enough, there exist two positive constants $ C_{A_2}$ and $ C_{A_3}$ such that
$$A_h(\mathbf{u},\mathbf{v})\leq C_{A_2}\norme{\mathbf{u}}_V \norme{\mathbf{v}},$$
and
$$\norme{\mathbf{v}}_V\leq C_{A_3} \norme{\mathbf{v}}.$$
\end{lemma}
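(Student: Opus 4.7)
The plan is to prove all three statements together, since they share a common mechanism: Cauchy–Schwarz applied term-by-term to $A_h=a_h+J_\mathbf{u}$, pairing each consistency/flux contribution with the matching jump-penalty contribution already present in the norms. The key algebraic identity that makes all the resulting constants independent of $\mu_1,\mu_2$ is $w_1\mu_1=w_2\mu_2=\{\mu\}_w/2$, and the key geometric input is that, under (A1), each cut sub-element $K\cap\Omega_i$ is either a triangle or a triangle and a quadrilateral, so the trace inequalities of Lemmas~\ref{tr_intf}--\ref{tr_cut} apply.

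For the first inequality (continuity on $V\times V$), I would expand $A_h(\mathbf{u},\mathbf{v})$ into its six groups of terms and bound each by Cauchy–Schwarz. The volume term gives $\sum\|\sqrt{\mu}\nabla\mathbf{u}\|\|\sqrt{\mu}\nabla\mathbf{v}\|$. The interface consistency terms are split as
\[
\int_{\Gamma_K}\{\mu\nabla\mathbf{u}\cdot\mathbf{n}\}_w[\mathbf{v}]
\le
\Bigl(\tfrac{h}{\{\mu\}_w}\|\{\mu\nabla\mathbf{u}\cdot\mathbf{n}\}_w\|_{0,\Gamma_K}^2\Bigr)^{1/2}
\Bigl(\tfrac{\{\mu\}_w}{h}\|[\mathbf{v}]\|_{0,\Gamma_K}^2\Bigr)^{1/2},
\]
so the first factor is controlled by $\norme{\mathbf{u}}_V$ and the second by $\norme{\mathbf{v}}_V$; the symmetric term is handled the same way. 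The Nitsche penalty on $\Gamma_K$ is already in both norms, and the cut-edge consistency and penalty terms are split analogously using $|\widetilde e|^{-1}\mu_i$ against $|\widetilde e|\mu_i$. Finally $J_\mathbf{u}$ is bounded directly by itself in both norms.

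For the inverse estimate $\norme{\mathbf{v}}_V\le C_{A_3}\norme{\mathbf{v}}$ on $V_h$, I need to dominate the two extra pieces in $\norme{\cdot}_V^2$. For the interface flux, writing $f_i=\nabla\mathbf{v}_i\cdot\mathbf{n}|_{\Gamma_K}$ and using $w_i\mu_i=\{\mu\}_w/2$ gives $\{\mu\nabla\mathbf{v}\cdot\mathbf{n}\}_w=\tfrac{\{\mu\}_w}{2}(f_1+f_2)$; since $\nabla\mathbf{v}_i$ is constant on $K\cap\Omega_i$, the second trace bound of Lemma~\ref{tr_intf} (applied after extending the piecewise linear to each sub-element) yields $\|f_i\|_{0,\Gamma_K}^2\lesssim h_K^{-1}\|\nabla\mathbf{v}_i\|_{0,K\cap\Omega_i}^2$. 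Multiplying by $h/\{\mu\}_w$ and using $\{\mu\}_w\le 2\mu_i$ produces $\mu_i\|\nabla\mathbf{v}\|_{0,K\cap\Omega_i}^2$, which is absorbed into $\norme{\mathbf{v}}^2$. The cut-edge flux term is bounded by the same pattern with Lemma~\ref{tr_cut} (the $h^2_K|\nabla v|^2_{1,K}$ contribution vanishes for piecewise $P_1$ functions), and $J_\mathbf{u}(\mathbf{v},\mathbf{v})$ and the $\mu^{-1/2}$–jump pieces appear in both norms, so no new work is needed there.

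For the middle inequality, I would run the same Cauchy–Schwarz splittings as in part one, but now on the $\mathbf{v}$-side keep the flux seminorms separate (rather than absorbing them into $\norme{\mathbf{v}}_V$) and immediately apply the inverse bounds from part three term-wise, so every $\mathbf{v}$-factor is controlled by $\norme{\mathbf{v}}$ while every $\mathbf{u}$-factor is controlled by $\norme{\mathbf{u}}_V$. Equivalently, one may simply combine parts one and three: $A_h(\mathbf{u},\mathbf{v})\le C_{A_1}\norme{\mathbf{u}}_V\norme{\mathbf{v}}_V\le C_{A_1}C_{A_3}\norme{\mathbf{u}}_V\norme{\mathbf{v}}$. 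The main obstacle is the careful algebra of the harmonic weights together with justifying the trace inequalities on the non-triangular sub-pieces $K\cap\Omega_i$; both are routine but require the "sufficiently small $h$" hypothesis (so that Lemma~\ref{sta2} and the sub-element geometry are well behaved).
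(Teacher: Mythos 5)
The paper itself does not write out this proof (it defers to \cite{hxx2020}), so I am judging your argument on its own terms. Your overall architecture is the standard and correct one: the first continuity bound follows from term-by-term Cauchy--Schwarz, pairing each flux factor weighted by $h/\{\mu\}_w$ or $|\widetilde e|\mu_i$ against the matching jump factor weighted by $\{\mu\}_w/h$ or $|\widetilde e|^{-1}\mu_i$, and the middle inequality is indeed an immediate corollary of the first and third. The observation $w_1\mu_1=w_2\mu_2=\{\mu\}_w/2$ is the right device for making the constants $\mu$-independent.

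However, there is a genuine gap in your proof of $\norme{\mathbf{v}}_V\le C_{A_3}\norme{\mathbf{v}}$, which is the load-bearing part. You claim that Lemma~\ref{tr_intf} gives $\|\nabla\mathbf{v}_i\cdot\mathbf{n}\|_{0,\Gamma_K}^2\lesssim h_K^{-1}\|\nabla\mathbf{v}_i\|_{0,K\cap\Omega_i}^2$. It does not: for the constant $\nabla\mathbf{v}_i$ on $K$ the trace inequality yields $h_K^{-1}\|\nabla\mathbf{v}_i\|_{0,K}^2$ over the \emph{whole} element $K$, and the ratio $|K|/|K\cap\Omega_i|$ is not uniformly bounded with respect to the position of the interface, so one cannot simply "absorb" this into the volume term of $\norme{\mathbf{v}}^2$, which only sees $\|\sqrt{\mu_i}\nabla\mathbf{v}\|_{0,K\cap\Omega_i}$. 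This is precisely why the ghost-penalty part of $J_\mathbf{u}$ over $\F^{\Gamma}_{h,i}$ is included in the norm: under Assumption (A2) one bounds, for $K\in G^\Gamma_h$, the quantity $\|\nabla\mathbf{v}_i\|^2_{0,K}$ by $\|\nabla\mathbf{v}_i\|^2_{0,K'}$ for a neighboring uncut element $K'\subset\Omega_i$ plus the accumulated gradient jumps $\sum_{e\in\F^\Gamma_{h,i}}|e|\,\|[\nabla\mathbf{v}]\|^2_{0,e}$, exactly as is done for the pressure in \eqref{bh_proof0}; only then is everything controlled by $\norme{\mathbf{v}}^2$. The same correction is needed for the cut-edge average term, where Lemma~\ref{sta2} gives $|\widetilde e|^2\lesssim\max_j|K_j^i|$ for only \emph{one} of the two sub-elements, so the other side must be reached through the $\F^{cut}_{h,i}$ jump penalty. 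Your proposal never actually uses the $\F^{\Gamma}_{h,i}$ portion of $J_\mathbf{u}$ (you dismiss $J_\mathbf{u}$ as "no new work"), which indicates the mechanism was missed; as written, the constant $C_{A_3}$ would degenerate for small cuts.
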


Further, we give the following properties of $b_h(\cdot,\cdot)$.
\begin{lemma}\label{continu_bh}
There exist a positive constants $C_{b_1}$ such that,
for any $\mathbf{v}\in V+V_h, p\in Q$, the following inequality holds
\begin{equation}\label{bh_1}
\begin{aligned}
b_h(p,\mathbf{v})\leq& C_{b_1}\norme{\mathbf{v}}\Big(\sum^2_{i=1}\sum_{K\in {\T}_{h,i}}\norm{\mu^{-1/2}p}^2_{0,K\cap\Omega_i}\\
&+\frac{h}{\{\mu\}_w}\sum_{K\in G^{\Gamma}_h}\norm{\{p\}_w}^2_{0,\Gamma_K}\quad+\sum^2_{i=1}\sum_{\widetilde{e}\in \F^{cut}_{h,i}}\frac{|\widetilde{e}|}{\mu_i}\norm{\{p\}_k}^2_{0,\widetilde{e}}\Big)^{\frac{1}{2}}.
\end{aligned}
\end{equation}
Additionally, suppose that $h$ is sufficiently small. For any $\mathbf{v}\in V+V_h, p\in Q_h$, there exist two positive constants $C_{b_2}$ and $C_{p}$ such that
\begin{equation}\label{bh_2}
\begin{aligned}
&b_h(p,\mathbf{v})\leq C_{b_2}\norme{\mathbf{v}}\left(\sum^2_{i=1}\sum_{K\in {\T}_{h,i}}\norm{\mu^{-1/2}p}^2_{0,K\cap\Omega_i}+J_p(p,p)\right)^{\frac{1}{2}},
\end{aligned}
\end{equation}
and
\begin{equation}\label{bh_3}
\mu^{-1}_i\sum_{K\in {\T}_{h,i}}\norm{p}^2_{0,K\cap\Omega_i}\leq C_p\left(\mu^{-1}_i\sum_{K\in {\T}_{h,i}\setminus G^{\Gamma}_h}\norm{p}^2_{0,K}+J_p(p,p)\right).
\end{equation}
\end{lemma}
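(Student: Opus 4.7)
The plan is to prove the three bounds in order: first the basic Cauchy--Schwarz-based estimate (\ref{bh_1}), then use it together with trace inequalities to derive (\ref{bh_2}), and finally handle the ghost-penalty-type estimate (\ref{bh_3}) with a chain argument resting on assumption (A2).

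For (\ref{bh_1}), I would split $b_h(p,\mathbf{v})$ into its three constituent pieces---the volume integrals $\int_{K\cap\Omega_i}p\nabla\cdot\mathbf{v}$, the cut-edge integrals $\int_{\widetilde e}\{p\}_k[\mathbf{v}\cdot\mathbf{n}_{\widetilde e}]$ and the interface integrals $\int_{\Gamma_K}\{p\}_w[\mathbf{v}\cdot\mathbf{n}]$---and apply Cauchy--Schwarz on each with the matching weights $(\mu^{-1/2},\mu^{1/2})$, $(|\widetilde e|^{1/2}\mu_i^{-1/2},|\widetilde e|^{-1/2}\mu_i^{1/2})$ and $(h^{1/2}\{\mu\}_w^{-1/2},h^{-1/2}\{\mu\}_w^{1/2})$, respectively. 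Using $|\nabla\cdot\mathbf{v}|\le\sqrt 2|\nabla\mathbf{v}|$, the sum of the $\mathbf{v}$-sides reassembles into $\norme{\mathbf{v}}$ from (\ref{norm1}), while the $p$-sides produce exactly the three terms on the right-hand side of (\ref{bh_1}).

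To pass from (\ref{bh_1}) to (\ref{bh_2}) for $p\in Q_h$, I would show
\begin{equation*}
\frac{h}{\{\mu\}_w}\sum_{K\in G^{\Gamma}_h}\norm{\{p\}_w}_{0,\Gamma_K}^2+\sum_{i=1}^2\sum_{\widetilde e\in\F^{cut}_{h,i}}\frac{|\widetilde e|}{\mu_i}\norm{\{p\}_k}_{0,\widetilde e}^2\lesssim\sum_{i=1}^2\sum_{K\in\T_{h,i}}\mu_i^{-1}\norm{p}_{0,K\cap\Omega_i}^2+J_p(p,p).
\end{equation*}
On $\Gamma_K$, the trace inequality (Lemma~\ref{tr_intf}) together with the identity $w_i^2/\{\mu\}_w=w_i/(2\mu_i)\le 1/(2\mu_i)$ yields $\frac{h}{\{\mu\}_w}\norm{\{p\}_w}_{0,\Gamma_K}^2\lesssim\sum_i\mu_i^{-1}\norm{p_i}_{0,K}^2$ (over the \emph{full} cut element $K$). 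On cut edges, Lemma~\ref{sta2} supplies $|\widetilde e|^2\le\theta\max_{j=l,r}|K_j^i|$ and thus $\frac{|\widetilde e|}{\mu_i}\norm{\{p\}_k}_{0,\widetilde e}^2\lesssim\mu_i^{-1}(\norm{p}_{0,K_l\cap\Omega_i}^2+\norm{p}_{0,K_r\cap\Omega_i}^2)$, which is already absorbed into the right-hand side. To replace $\norm{p_i}_{0,K}^2$ on a full cut $K$ coming from the interface bound by norms on $K\cap\Omega_i$ modulo $J_p$, I invoke (\ref{bh_3}), which is proved next.

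For (\ref{bh_3}) I use a chain argument. For a fixed $K\in G^\Gamma_h\cap\T_{h,i}$, assumption (A2) yields a non-cut $K'\in\T_{h,i}\setminus G^\Gamma_h$ sharing an edge or a vertex $z\in\Omega_i$ with $K$. Traversing the patch $\triangle_z$ until the first non-cut element is hit produces a chain $K=K_0,K_1,\ldots,K_m=K'$ in $\T_{h,i}$ of length $m=O(1)$ whose every shared edge $e_j$ lies in $\F^\Gamma_{h,i}$ (because at least one of the two elements of $e_j$ is cut as long as $j<m$). Since $p$ is a constant $p_{K_j}$ on each $K_j$, one has $|e_j|\int_{e_j}\mu_i^{-1}[p]^2=\mu_i^{-1}|e_j|^2(p_{K_{j-1}}-p_{K_j})^2$, so using $|K|\eqsim|e_j|^2$ and the telescoping $p_K-p_{K'}=\sum_{j=1}^m(p_{K_{j-1}}-p_{K_j})$ gives
\begin{equation*}
\mu_i^{-1}\norm{p}_{0,K}^2\lesssim\mu_i^{-1}\norm{p}_{0,K'\cap\Omega_i}^2+\sum_{j=1}^m|e_j|\int_{e_j}\mu_i^{-1}[p]^2.
\end{equation*}
Summing over $K\in G^\Gamma_h\cap\T_{h,i}$ and observing that each non-cut $K'$ and each edge of $\F^\Gamma_{h,i}$ is charged by only $O(1)$ source elements (by shape-regularity and the finite valence of $\triangle_z$) yields (\ref{bh_3}). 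The main obstacle is precisely this step: guaranteeing uniform bounds on both the chain length and the reuse count is where assumption (A2) and quasi-uniformity are essential, and this is what keeps $C_p$ independent of how the interface cuts the mesh.
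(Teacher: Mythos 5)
Your overall strategy matches the paper's: \eqref{bh_1} by weighted Cauchy--Schwarz, \eqref{bh_3} by the constant-per-element chain argument resting on (A2) (the paper imports this from the proof of Lemma~4.1 of its earlier work, and your accounting of chain length and reuse multiplicity is if anything more explicit than the paper's), and \eqref{bh_2} by controlling the interface and cut-edge contributions of \eqref{bh_1} through that same chain mechanism. However, there is one concrete step that fails: your cut-edge estimate
$\frac{|\widetilde e|}{\mu_i}\norm{\{p\}_k}_{0,\widetilde e}^2\lesssim\mu_i^{-1}\bigl(\norm{p}_{0,K_l\cap\Omega_i}^2+\norm{p}_{0,K_r\cap\Omega_i}^2\bigr)$
does not follow from Lemma~\ref{sta2}. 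That lemma gives $|\widetilde e|^2\le\theta\max_{j=l,r}|K_j^i|$, i.e.\ it controls $|\widetilde e|^2$ only by the \emph{larger} of the two sub-element areas. Writing $p_j$ for the constant values, your claim requires $|\widetilde e|^2 p_r^2\lesssim|K_r^i|\,p_r^2=\norm{p}_{0,K_r\cap\Omega_i}^2$ for the smaller portion as well, and $K_r^i$ can be an arbitrarily thin sliver with $|K_r^i|\ll|\widetilde e|^2$, so that inequality is false in general; the constant would depend on how the interface cuts the mesh, which is exactly what the lemma is meant to avoid.

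The repair is the one the paper uses and is available within your own framework: on $\widetilde e$ write $p_r=p_l-[p]$ (with $l$ the index realizing the maximum in Lemma~\ref{sta2}), so that
\begin{equation*}
\frac{|\widetilde e|}{\mu_i}\norm{\{p\}_k}_{0,\widetilde e}^2\lesssim\mu_i^{-1}\Bigl(\norm{p}_{0,K_l^i}^2+|\widetilde e|\,\norm{[p]}_{0,\widetilde e}^2\Bigr),
\end{equation*}
where the first term is legitimately absorbed by the volume term and the second is exactly a summand of the $\F^{cut}_{h,i}$ part of $J_p(p,p)$, which already sits on the right-hand side of \eqref{bh_2}. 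One further small imprecision: for the interface term you ``invoke \eqref{bh_3}'' to pass from $\norm{p_i}_{0,K}^2$ on a full cut element to cut-portion norms plus $J_p$, but \eqref{bh_3} as stated bounds cut-portion norms by non-cut-element norms; what you actually need is the intermediate chain inequality $\sum_{K\in G_h^\Gamma}\norm{p_i}_{0,K}^2\lesssim\sum_{K\in\T_{h,i}\setminus G_h^\Gamma}\norm{p}_{0,K}^2+\sum_{e\in\F_{h,i}^\Gamma}|e|\,\norm{[p]}_{0,e}^2$, which your chain argument does establish en route, so this is a matter of citation rather than substance.
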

\begin{proof}
It is easy to obtain the first inequality \eqref{bh_1} by using Cauchy-Schwarz inequality directly. We will give the proof of \eqref{bh_2} and \eqref{bh_3} in details. First, using $\frac{hw^2_i}{\{\mu\}_w}\leq \frac{h}{2\mu_i}$, we have
$$\frac{h}{\{\mu\}_w}\sum_{K\in G^{\Gamma}_h}\norm{\{p\}_w}^2_{0,\Gamma_K}\leq \sum_{i=1}^2\frac{h}{\mu_i}\sum_{K\in G^{\Gamma}_h}\norm{p|_{\Omega_i}}^2_{0,\Gamma_K}.$$
Then for any $K\in G^{\Gamma}_h$, according to Assumption (A2), there exists $K'\in \Omega_i$ such that $K'$ shares an edge or a vertex with $K$. Since $p$ is piecewise constant on $\Omega^{+}_{h,i}$, from the proof of Lemma 4.1 of \cite{hxx2020}, we have
\begin{equation}\label{bh_proof0}
\begin{aligned}
h\norm{p|_{\Omega_i}}^2_{0,\Gamma_K}&\lesssim\norm{p}^2_{0,K'}+\sum_{e\in {\F}^{\Gamma}_{h,i}}|e|\ \norm{[p]}^2_{0,e}.
\end{aligned}
\end{equation}
Hence
\begin{equation}\label{bh_proof1}
\begin{split}
&\frac{h}{\{\mu\}_w}\sum_{K\in G^{\Gamma}_h}\norm{\{p\}_w}^2_{0,\Gamma_K}\\
&\qquad\lesssim \sum^2_{i=1}\mu^{-1}_i\left(\sum_{K\in {\T}_{h,i}}\norm{p}^2_{0,K\cap\Omega_i}+\sum_{e\in \F^{\Gamma}_{h,i}}|e|\ \norm{[p]}^2_{0,e}\right).
\end{split}
\end{equation}
For any {$\widetilde{e}\in \F^{cut}_{h,i}$}, there exist two elements $K_l,K_r\in G^{\Gamma}_h$ so that $\widetilde{e}=\partial K^i_l\cap K^i_r$ where $K^i_j=K_j\cap \Omega_i, j=l,r$. Assume $|K^i_l|=\max_{j=l,r}|K^i_j|$. According to Lemma~\ref{sta2}, we have $|\widetilde{e}|^2\lesssim |K^i_l|$. Applying the fact that $p\in Q_{h,i}$ is a piecewise constant polynomial, we obtain
\begin{equation}\label{bh_proof2}
\begin{aligned}
\frac{|\widetilde{e}|}{\mu_i}\norm{\{p\}_k}^2_{0,\widetilde{e}}&\leq \mu^{-1}_i|\widetilde{e}|\sum_{j=l,r}\norm{p|_{K^i_j}}^2_{0,\widetilde{e}}\\
& \lesssim\mu^{-1}_i|\widetilde{e}|\left(\norm{p|_{K^i_l}}^2_{0,\widetilde{e}}+\norm{[p]}^2_{0,\widetilde{e}}\right)\\
&\lesssim \mu^{-1}_i\left(\norm{p}^2_{0,K^i_l}+|\widetilde{e}|\ \norm{[p]}^2_{0,\widetilde{e}}\right).
\end{aligned}
\end{equation}
From \eqref{bh_1}, \eqref{bh_proof1} and \eqref{bh_proof2}, {\eqref{bh_2} follows immediately.}

For any $p\in Q_h$, {we note} that
\begin{equation}\label{bh_proof4}
\begin{aligned}
\mu^{-1}_i\sum_{K\in {\T}_{h,i}}&\norm{p}^2_{0,K\cap\Omega_i}\leq \mu^{-1}_i\left(\sum_{K\in {\T}_{h,i}\setminus G^{\Gamma}_h}\norm{p}^2_{0,K}+\sum_{K\in G^{\Gamma}_h}\norm{p}^2_{0,K}\right),
\end{aligned}
\end{equation}
and for $e\subseteq\partial K, K\in G^{\Gamma}_h$
\begin{equation}
\norm{p}^2_{0,K}\lesssim |e|\ \norm{p}^2_{0,e}.
\end{equation}
For $K\in G^{\Gamma}_h$, similar to \eqref{bh_proof0}, we have
$$|e|\ \norm{p}^2_{0,e}\lesssim \norm{p}^2_{0,K'}+\sum_{e\in {\F}^{\Gamma}_{h,i}}|e|\ \norm{[p]}^2_{0,e}.$$
Thus, the following inequality holds
$$\sum_{K\in G^{\Gamma}_h}\norm{p_i}^2_{0,K}\lesssim \sum_{K\in {\T}_{h,i}\setminus G^{\Gamma}_h}\norm{p}^2_{0,K}+\sum_{e\in {\F}^{\Gamma}_{h,i}}|e|\ \norm{[p]}^2_{0,e}.$$
{Then \eqref{bh_3} is yielded immediately}.
\end{proof}

From Lemma~\ref{continu_ah}, Lemma~\ref{continu_bh} and Cauchy-Schwarz inequality, we can obtain the following lemma easily.
\begin{lemma}\label{con_bilinear}
For $(\mathbf{u},p)\in (V+V_h)\times Q$, the following inequality holds
$$B_h[(\mathbf{u},p),(\mathbf{v},q)]{\lesssim}\norme{(\mathbf{u},p)}_V\norme{(\mathbf{v},q)}_V,\ \forall (\mathbf{v},q) \in (V+V_h)\times Q.$$
Furthermore, for $(\mathbf{v},q)\in V_h\times Q_h$, under assuming that $h$ is sufficiently small,  we have
$$B_h[(\mathbf{u},p),(\mathbf{v},q)]{\lesssim}\norme{(\mathbf{u},p)}_V\norme{(\mathbf{v},q)},$$
and
$$\norme{(\mathbf{v},q)}_V{\lesssim}\norme{(\mathbf{v},q)}.$$
\end{lemma}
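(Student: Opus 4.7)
The plan is to decompose $B_h[(\mathbf{u},p),(\mathbf{v},q)]$ into its four constituents $A_h(\mathbf{u},\mathbf{v})$, $b_h(p,\mathbf{v})$, $-b_h(q,\mathbf{u})$, and $J_p(p,q)$, bound each piece separately against the appropriate norms, and then recombine with Cauchy--Schwarz in the finite sum. For the first (general) estimate, Lemma~\ref{continu_ah} immediately yields $A_h(\mathbf{u},\mathbf{v})\lesssim \norme{\mathbf{u}}_V\norme{\mathbf{v}}_V$. The two mixed terms are handled via inequality \eqref{bh_1} of Lemma~\ref{continu_bh}: by definition of $\norme{(\cdot,\cdot)}_V$, the parenthesized factor on the right-hand side of \eqref{bh_1} is bounded by $\norme{(\mathbf{v},p)}_V$ (resp.\ $\norme{(\mathbf{u},q)}_V$), so $b_h(p,\mathbf{v})\lesssim \norme{\mathbf{v}}\,\norme{(\mathbf{v},p)}_V \le \norme{(\mathbf{u},p)}_V\norme{(\mathbf{v},q)}_V$ and similarly for $b_h(q,\mathbf{u})$. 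The stabilization $J_p(p,q)\le J_p(p,p)^{1/2}J_p(q,q)^{1/2}$ by the Cauchy--Schwarz inequality applied edge-by-edge, and both factors are dominated by the corresponding $V$-norms. Summing the four bounds and one more Cauchy--Schwarz step on the resulting four-term sum gives the first claim.

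For the second estimate, where $(\mathbf{v},q)\in V_h\times Q_h$, the treatment of $A_h(\mathbf{u},\mathbf{v})$ switches to the sharper bound $A_h(\mathbf{u},\mathbf{v})\lesssim \norme{\mathbf{u}}_V\norme{\mathbf{v}}$ from Lemma~\ref{continu_ah}. The term $b_h(p,\mathbf{v})$ is still bounded by \eqref{bh_1} (since $p\in Q$ is arbitrary), yielding a factor dominated by $\norme{(\mathbf{u},p)}_V\norme{\mathbf{v}}$. For the term $-b_h(q,\mathbf{u})$ I would apply inequality \eqref{bh_2} of Lemma~\ref{continu_bh}, which is valid precisely because $q\in Q_h$, to obtain $b_h(q,\mathbf{u})\lesssim \norme{\mathbf{u}}\,\bigl(\mu^{-1}\|q\|^2 + J_p(q,q)\bigr)^{1/2}\lesssim \norme{(\mathbf{u},p)}_V\norme{(\mathbf{v},q)}$; here $\norme{\mathbf{u}}\lesssim\norme{\mathbf{u}}_V$ holds trivially from the definition \eqref{norm2}. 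The $J_p$ contribution is handled as before.

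The third statement $\norme{(\mathbf{v},q)}_V\lesssim\norme{(\mathbf{v},q)}$ for $(\mathbf{v},q)\in V_h\times Q_h$ reduces to bounding the four ``extras'' in the $V$-norm by $\norme{(\mathbf{v},q)}^2$. For the two velocity extras involving $\{\mu\nabla\mathbf{v}\cdot\mathbf{n}\}_w$ on $\Gamma_K$ and $\{\nabla\mathbf{v}\cdot\mathbf{n}_{\widetilde{e}}\}_k$ on cut edges, Lemma~\ref{continu_ah} already supplies $\norme{\mathbf{v}}_V\le C_{A_3}\norme{\mathbf{v}}$. For the two pressure extras, I would repeat the argument used for \eqref{bh_2} inside the proof of Lemma~\ref{continu_bh}: since $q\in Q_h$ is piecewise constant, estimate \eqref{bh_proof1} gives $\frac{h}{\{\mu\}_w}\sum_{K\in G^\Gamma_h}\|\{q\}_w\|^2_{0,\Gamma_K}\lesssim \|\mu^{-1/2}q\|^2_{0,\Omega_1\cup\Omega_2}+J_p(q,q)$, and \eqref{bh_proof2} gives the analogous control on cut edges. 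Both right-hand sides are part of $\norme{(\mathbf{v},q)}^2$ by \eqref{vp_v}.

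The main obstacle is bookkeeping rather than genuine difficulty: one must carefully match each boundary/edge contribution in the four pieces of $B_h$ with the exact term in the $V$-norm that dominates it, and in particular realize that the extra pressure terms in $\norme{(\cdot,\cdot)}_V$ are designed precisely so that \eqref{bh_1} gives the first (symmetric) bound, while in the finite-dimensional case \eqref{bh_2}--\eqref{bh_3} let one drop those extras and recover the stronger, non-$V$ norm on the test side. Everything else is a routine Cauchy--Schwarz in the finite sum over the four terms.
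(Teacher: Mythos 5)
Your proposal is correct and follows essentially the same route the paper intends: the paper offers no detailed proof, stating only that the lemma follows from Lemma~\ref{continu_ah}, Lemma~\ref{continu_bh} and the Cauchy--Schwarz inequality, and your term-by-term decomposition of $B_h$ with \eqref{bh_1} for the general case, \eqref{bh_2} for $q\in Q_h$, and \eqref{bh_proof1}--\eqref{bh_proof2} for the pressure extras in the $V$-norm is precisely the fleshed-out version of that argument. (Only a cosmetic remark: the intermediate quantity $\norme{(\mathbf{v},p)}_V$ in your bound for $b_h(p,\mathbf{v})$ mixes slots, but since the pressure part of the $V$-norm is independent of the velocity slot the final inequality is unaffected.)
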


\section{Error analysis}\label{estimation} In this section, we will give a priori error estimates. Firstly, we prove the stability of $b_h(\cdot,\cdot)$. We use some of the ideas in \cite{Hansbo2014A,Kirchhart16} and introduce the piecewise constant function
\[{\overline{p}_{\mu}}=\begin{cases}
 \mu_1|\Omega_1|^{-1}&\text{\ on $\Omega_1$},\\
 -\mu_2|\Omega_2|^{-1}&\text{\ on $\Omega_2$}.
\end{cases}\]
Let $M_0=span\{\overline{p}_{\mu}\}\subset Q_h$. The space $Q_h$ can be decomposed as $Q_h=M_0\oplus M^{\bot}_{h,0}$, with $(p^{\bot}_{h,0},1)_{\Omega_i}=0,i=1,2$ for any $p^{\bot}_{h,0}\in M_{h,0}^{\bot}$, see Lemma 2.1 of \cite{Kirchhart16}.
\begin{lemma}\label{inf-sup-bh-1}
Suppose that $h$ is sufficiently small. For any $p_0\in M_0$, there exist $\mathbf{v}_{h,0}\in V_h$ and positive constants $C_{1,p_0}$ and $C_{2,p_0}$ such that
$$b_h(p_0,\mathbf{v}_{h,0})\geq C_{1,p_0}\norm{\mu^{-1/2}p_0}^2_{0,\Omega_1\cup\Omega_2},\ \ \norme{\mathbf{v}_{h,0}}\leq C_{2,p_0}\norm{\mu^{-1/2}p_0}_{0,\Omega_1\cup\Omega_2}.$$
\end{lemma}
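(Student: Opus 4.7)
The plan is to construct $\mathbf{v}_{h,0}$ as a Crouzeix--Raviart--type interpolant into $V_h$ of a carefully chosen lifting of $\mu^{-1}p_0$, imitating the classical Stokes inf-sup argument while accounting for the two-phase, unfitted nature of $V_h$. Since $M_0$ is one-dimensional, write $p_0 = c\overline{p}_\mu$; a direct computation gives $\|\mu^{-1/2}p_0\|^2_{0,\Omega_1\cup\Omega_2} = c^2(\mu_1/|\Omega_1| + \mu_2/|\Omega_2|)$ and $[p_0] = c(\mu_1/|\Omega_1|+\mu_2/|\Omega_2|) = \|\mu^{-1/2}p_0\|^2/c$. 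The goal is to build $\mathbf{v}_{h,0}$ with $b_h(p_0,\mathbf{v}_{h,0})$ comparable to $\|\mu^{-1/2}p_0\|^2$ and $\norme{\mathbf{v}_{h,0}}$ comparable to $\|\mu^{-1/2}p_0\|$, with constants depending only on $\Omega,\Omega_1,\Omega_2$ and the shape regularity.

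To secure viscosity-robust scaling, fix $\psi \in [H^{1/2}(\Gamma)]^2$ with $\int_\Gamma \psi\cdot\mathbf{n} = -c$ and $\|\psi\|_{H^{1/2}(\Gamma)} \lesssim c$, and solve the compatible divergence problems $\nabla\cdot\mathbf{w}^{(i)} = -\mu_i^{-1}p_0|_{\Omega_i}$ in $\Omega_i$ with $\mathbf{w}^{(i)}|_\Gamma = \psi$ and $\mathbf{w}^{(1)}|_{\partial\Omega}=0$. Classical lifting theory (Galdi/Bogovskii) gives $\|\mathbf{w}^{(i)}\|_{H^1(\Omega_i)} \lesssim c/\sqrt{|\Omega_i|}$. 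Extend each $\mathbf{w}^{(i)}$ by Stein extension to $\Omega^+_{h,i}$ without blowing up the $H^1$ norm, and define $\mathbf{v}_{h,0}^i$ on $\mathcal{T}_{h,i}$ by the Crouzeix--Raviart condition $\int_e \mathbf{v}_{h,0}^i = \int_e \mathbf{w}^{(i)}$ for every edge $e$ of every $K\in\mathcal{T}_{h,i}$; set $\mathbf{v}_{h,0}|_{\Omega_i} := \mathbf{v}_{h,0}^i|_{\Omega_i}$. The condition $\int_e \mathbf{v}_{h,0}=0$ on $e\subset\partial\Omega$ transfers from $\mathbf{w}^{(1)}=0$ there, so $\mathbf{v}_{h,0}\in V_h$.

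For the lower bound, integrate the volume part of $b_h(p_0,\mathbf{v}_{h,0})$ by parts element-wise using $\nabla p_0=0$; the interior uncut-edge contributions vanish by the CR edge-average property, while the cut-edge volume contributions cancel exactly against the $\{p_0\}_k$ term of $b_h$ because $\{p_0\}_k = p_0|_{\Omega_i}$ on every $\widetilde{e}\in\F^{cut}_{h,i}$. Combining the remaining $\Gamma_K$ boundary piece with the $\{p_0\}_w[\mathbf{v}_{h,0}\cdot\mathbf{n}]$ interface term and simplifying via $w_1+w_2=1$ and the definitions of $\{\cdot\}_w,\{\cdot\}^w$ leaves
\begin{equation*}
b_h(p_0,\mathbf{v}_{h,0}) = -[p_0]\int_\Gamma \{\mathbf{v}_{h,0}\cdot\mathbf{n}\}^w.
\end{equation*}
Since $\mathbf{w}^{(1)}|_\Gamma=\mathbf{w}^{(2)}|_\Gamma=\psi$ and $w_1+w_2=1$, the corresponding integral for $\mathbf{w}$ equals $(w_1+w_2)\int_\Gamma\psi\cdot\mathbf{n}=-c$, so the continuous value is $-[p_0](-c) = c[p_0] = \|\mu^{-1/2}p_0\|^2$. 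A trace-plus-approximation estimate on $\Gamma_K$ bounds the discrete/continuous discrepancy by $Ch^{1/2}[p_0]\sum_i\|\mathbf{w}^{(i)}\|_{H^1} \lesssim h^{1/2}\|\mu^{-1/2}p_0\|^2$, which is absorbed for $h$ small, yielding $b_h(p_0,\mathbf{v}_{h,0}) \geq \tfrac{1}{2}\|\mu^{-1/2}p_0\|^2$.

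For the norm bound, CR stability gives $\|\nabla \mathbf{v}_{h,0}^i\|_{L^2(\Omega^+_{h,i})} \lesssim \|\mathbf{w}^{(i)}\|_{H^1(\Omega^+_{h,i})} \lesssim c/\sqrt{|\Omega_i|}$, so the volume contribution satisfies $\sum_i\mu_i\|\nabla\mathbf{v}_{h,0}^i\|^2 \lesssim \sum_i c^2\mu_i/|\Omega_i| = \|\mu^{-1/2}p_0\|^2$. The jumps on $\Gamma_K$ are controlled by $\|\mathbf{v}_{h,0}^i-\mathbf{w}^{(i)}\|_{0,\Gamma_K}\lesssim h^{1/2}|\mathbf{w}^{(i)}|_{H^1(K)}$ combined with $\mathbf{w}^{(1)}=\mathbf{w}^{(2)}$ on $\Gamma$, giving $(\{\mu\}_w/h)\sum_K\|[\mathbf{v}_{h,0}]\|^2_{0,\Gamma_K}\lesssim \{\mu\}_w\sum_i\|\mathbf{w}^{(i)}\|^2_{H^1(\Omega^+_{h,i})}\lesssim\|\mu^{-1/2}p_0\|^2$ via $\{\mu\}_w\leq\min(\mu_1,\mu_2)$; the cut-edge and $J_\mathbf{u}$ contributions are handled analogously using Lemmas~\ref{sta2}, \ref{tr_intf}, and \ref{tr_cut}. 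The main obstacle is keeping every constant independent of the viscosity jump: this is the reason for constructing $\mathbf{w}$ subdomain-wise with $\nabla\cdot\mathbf{w}^{(i)}=-\mu_i^{-1}p_0|_{\Omega_i}$ (rather than a single global $\mathbf{w}$ with $\nabla\cdot\mathbf{w}=-\mu^{-1}p_0$), so that the weight $\mu_i$ in $\norme{\cdot}$ multiplies exactly $\|\mathbf{w}^{(i)}\|^2_{H^1(\Omega_i)}\lesssim c^2/|\Omega_i|$ and reproduces $\|\mu^{-1/2}p_0\|^2$ without generating crossed terms like $\mu_1/|\Omega_2|$ that would ruin robustness.
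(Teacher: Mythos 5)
Your proof is correct in its essentials but follows a genuinely different route from the paper's. The paper rescales the pressure to $\widetilde{p}_0=\mu^{-1}p_0$, invokes the known inf-sup stability of the (single-valued, interface-blind) nonconforming $P_1/P_0$ pair to obtain a test function $\mathbf{v}_{h,0}\in V_h$ with $[\mathbf{v}_{h,0}]|_\Gamma=0$ and $\mathbf{v}_{h,0}|_{\partial\Omega}=0$, controls the perturbation $\widetilde{p}_0-q_h$ (where $q_h$ replaces $\widetilde{p}_0$ by its element averages on cut elements) by an $O(h^{1/2})$ term, and then transfers the bound back to $p_0$ through the scalar identity $b_h(p_0,\mathbf{v}_{h,0})=C(\mu,\Omega)\,b_h(\widetilde{p}_0,\mathbf{v}_{h,0})$ --- valid because for such $\mathbf{v}_{h,0}$ both forms collapse to a single interface integral weighted by the respective jumps and $M_0$ is one-dimensional --- together with $C(\mu,\Omega)\geq\widetilde{C}\mu_{\max}$. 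You instead build the test function explicitly: a two-sided Bogovskii lifting with divergence $-\mu_i^{-1}p_0|_{\Omega_i}$ and a common interface trace $\psi$, followed by subdomain-wise Crouzeix--Raviart interpolation, and you evaluate $b_h(p_0,\cdot)$ exactly as $-[p_0]\int_\Gamma\{\mathbf{v}_{h,0}\cdot\mathbf{n}\}^w$; your element-wise integration by parts, the cancellation of the cut-edge volume residuals against the $\{p_0\}_k$ terms, and the resulting identity all check out (it is the same algebraic fact the paper exploits implicitly). Both arguments require $h$ small to absorb an $O(h^{1/2})$ defect. The paper's route buys brevity by outsourcing the hard work to the classical fitted-mesh inf-sup result; yours buys a self-contained construction that makes the $\mu$-robustness mechanism explicit (the weight $\mu_i$ in $\norme{\cdot}$ meets exactly $\norm{\mathbf{w}^{(i)}}^2_{H^1(\Omega_i)}\lesssim c^2$, so no cross terms of the form $\mu_1/|\Omega_2|$ appear). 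Two small slips to repair: the harmonic mean satisfies $\{\mu\}_w\leq 2\min(\mu_1,\mu_2)$, not $\leq\min(\mu_1,\mu_2)$ (harmless, since only a two-sided bound up to constants is needed); and the lifting estimate yields $\norm{\mathbf{w}^{(i)}}_{H^1(\Omega_i)}\lesssim c\bigl(1+|\Omega_i|^{-1/2}\bigr)\lesssim c$ rather than $c/\sqrt{|\Omega_i|}$, because of the $\norm{\psi}_{H^{1/2}(\Gamma)}\lesssim c$ contribution --- again harmless since $|\Omega_i|$ and $\Gamma$ are fixed data of the problem.
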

\begin{proof}
Let $\widetilde{p}_0=\mu^{-1}p_0$, then $(\widetilde{p}_0,1)_{\Omega_1\cup\Omega_2}=0$. The relation between $\widetilde{p}_0$ and $p_0$ satisfies $\norm{\mu^{-1/2}p_0}^2_{0,\Omega_1\cup\Omega_2}=C(\mu,\Omega)\norm{\widetilde{p}_0}^2_{0,\Omega_1\cup\Omega_2}$, with
$$C(\mu,\Omega)=\frac{\mu_1 |\Omega_1|^{-1}+\mu_2 |\Omega_2|^{-1}}{|\Omega_1|^{-1}+|\Omega_2|^{-1}}\geq \mu_{max}\min_{i=1,2}\frac{|\Omega_i|^{-1}}{|\Omega_1|^{-1}+|\Omega_2|^{-1}}=\widetilde{C}\mu_{max},$$
with $\mu_{max}=\max\{\mu_1,\mu_2\}$.
Define $I(\widetilde{p}_0)$ such that
\[{I(\widetilde{p}_{0})}=\begin{cases}
 \widetilde{p}_0                      &\text{$K \in \T_h\setminus{G^{\Gamma}_h}$},\\
 \frac{1}{|K|}\int_{K}\widetilde{p}_0 &\text{$K \in G^{\Gamma}_h$}.
\end{cases}\]
Let $\alpha=\frac{1}{|\Omega_1\cup\Omega_2|}(I(\widetilde{p}_0),1)_{\Omega_1\cup\Omega_2}$ and $q_h=I(\widetilde{p}_0)-\alpha$, then $(q_h,1)_{\Omega_1\cup\Omega_2}=0$. From the inf-sup stability of the nonconforming-$P_1/P_0$ pair, there exist $\mathbf{v}_{h,0}\in V_h$ with $\mathbf{v}_{h,0}|_{\partial\Omega}=0$ and $[\mathbf{v}_{h,0}]|_{\Gamma}=0$ such that
\begin{equation}\label{p0-vh}
\norm{\nabla\mathbf{v}_{h,0}}_{0,\Omega_1\cup\Omega_2}=\norm{\widetilde{p}_0}_{0,\Omega_1\cup\Omega_2}, \ \ b_h(q_h,\mathbf{v}_{h,0})\geq C_1\norm{\widetilde{p}_0}_{0,\Omega_1\cup\Omega_2}\norm{q_h}_{0,\Omega_1\cup\Omega_2},
\end{equation}
where we have used the fact that $$b_h(q_h,\mathbf{v}_{h,0})=-\sum_{i=1}^2\sum_{K\in {\T}_{h,i}}\int_{K\cap\Omega_i}q_h\nabla\cdot \mathbf{v}_{h,0}.$$
From the definition of $b_h(\cdot,\cdot)$ and $\mathbf{v}_{h,0}\in V_h$ with $\mathbf{v}_{h,0}|_{\partial\Omega}=0$ and $[\mathbf{v}_{h,0}]|_{\Gamma}=0$, we have
\begin{equation}
\begin{aligned}
b_h(\widetilde{p}_0-q_h,\mathbf{v}_{h,0})=&-\sum_{i=1}^2\sum_{K\in {\T}_{h,i}}\int_{K\cap\Omega_i}(\widetilde{p}_0-q_h)\nabla\cdot \mathbf{v}_{h,0}\\
&+\sum_{i=1}^2\sum_{\widetilde{e}\in\F^{cut}_{h,i} }\int_{\widetilde{e}} \{\widetilde{p}_0-q_h\}_k[\mathbf{v}_{h,0}\cdot \mathbf{n}_{\widetilde{e}}].
\end{aligned}
\end{equation}
For $\tilde{e}\subset e\subset\partial K$, using Lemma~\ref{tr_cut} we have
\begin{equation}
|\widetilde{e}|^{-1}\norm{\mathbf{v}_{h,0}}^2_{0,\widetilde{e}}\lesssim h^{-2}\norm{\mathbf{v}_{h,0}}^2_{0,K}+\norm{\nabla\mathbf{v}_{h,0}}^2_{0,K},
\end{equation}
then by the following Poincar$\acute{e}$ inequality on $\Omega_1\cup\Omega_2$ (see Lemma 5 of \cite{sarkis1995})
\begin{equation}\label{poincare}
\norm{\mathbf{v}_{h,0}}_{0,\Omega_1\cup\Omega_2}\lesssim h\norm{\nabla\mathbf{v}_{h,0}}_{0,\Omega_1\cup\Omega_2},
\end{equation}
we get
\begin{equation}\label{bh_p0_1}
\sum_{i=1}^2\sum_{\widetilde{e}\in\F^{cut}_{h,i} }|\widetilde{e}|^{-1}\norm{\mathbf{v}_{h,0}}^2_{0,\widetilde{e}}\lesssim\norm{\nabla\mathbf{v}_{h,0}}^2_{0,\Omega_1\cup\Omega_2}.
\end{equation}
For $\tilde{e}\subset e\subset\partial K$, the following inequalities hold
\begin{equation}
|\widetilde{e}|\norm{\widetilde{p}_0-q_h}^2_{0,\widetilde{e}}\leq |e|\norm{\widetilde{p}_0-q_h}^2_{0,e}\lesssim \norm{\widetilde{p}_0-q_h}^2_{0,K},
\end{equation}
then using $\frac{|\Omega^{+}_{h,i}|}{|\Omega^{-}_{h,i}}|=1+\frac{|\Omega^{+}_{h,i}\setminus \Omega^{-}_{h,i}|}{|\Omega^{-}_{h,i}|}\lesssim 1$, we get
\begin{equation}\label{bh_p0_2}
\sum_{\widetilde{e}\in\F^{cut}_{h,i} }|\widetilde{e}|\norm{\widetilde{p}_0-q_h}^2_{0,\widetilde{e}}\lesssim \norm{\widetilde{p}_0-q_h}^2_{0,\Omega^{+}_{h,i}}\lesssim \norm{\widetilde{p}_0-q_h}^2_{0,\Omega^{-}_{h,i}}.
\end{equation}
By Cauchy-Schwarz inequality, \eqref{bh_p0_1} and \eqref{bh_p0_2}, the following estimate holds
\begin{equation}
b_h(\widetilde{p}_0-q_h,\mathbf{v}_{h,0})\geq-
C_2\norm{\nabla\mathbf{v}_{h,0}}_{0,\Omega_1\cup\Omega_2}\norm{\widetilde{p}_0-q_h}_{0,\Omega_1\cup\Omega_2}.
\end{equation}
Thus
\begin{equation}
\begin{aligned}
b_h(\widetilde{p}_0,\mathbf{v}_{h,0})&=b_h(q_h,\mathbf{v}_{h,0})+b_h(\widetilde{p}_0-q_h,\mathbf{v}_{h,0})\\
&\geq C_1\norm{\widetilde{p}_0}_{0,\Omega_1\cup\Omega_2}\norm{q_h}_{0,\Omega_1\cup\Omega_2}-
C_2\norm{\widetilde{p}_0}_{0,\Omega_1\cup\Omega_2}\norm{\widetilde{p}_0-q_h}_{0,\Omega_1\cup\Omega_2}\\
&\geq \norm{\widetilde{p}_0}_{0,\Omega_1\cup\Omega_2}\left(C_1\norm{\widetilde{p}_0}_{0,\Omega_1\cup\Omega_2}
-(C_2+C_1)\norm{\widetilde{p}_0-q_h}_{0,\Omega_1\cup\Omega_2}\right)\\
&\geq \norm{\widetilde{p}_0}^2_{0,\Omega_1\cup\Omega_2}\left(C_1-
\frac{(C_2+C_1)\norm{\widetilde{p}_0-q_h}_{0,\Omega_1\cup\Omega_2}}{\norm{\widetilde{p}_0}_{0,\Omega_1\cup\Omega_2}}\right).
\end{aligned}
\end{equation}
Note that
\begin{equation}
\begin{aligned}
|\alpha|&=\frac{1}{|\Omega_1\cup\Omega_2|}|(I(\widetilde{p}_0),1)_{\Omega_1\cup\Omega_2}|=
\frac{1}{|\Omega_1\cup\Omega_2|}|(I(\widetilde{p}_0)-\widetilde{p}_0,1)_{\Omega_1\cup\Omega_2}|\\
&\leq c\norm{I(\widetilde{p}_0)-\widetilde{p}_0}_{0,\Omega_1\cup\Omega_2}\leq ch^{1/2},
\end{aligned}
\end{equation}
which implies $\norm{\widetilde{p}_0-q_h}_{0,\Omega_1\cup\Omega_2}\leq ch^{1/2}$.
Hence,
$$b_h(\widetilde{p}_0,\mathbf{v}_{h,0})\geq (C_1-ch^{1/2})\norm{\widetilde{p}_0}^2_{0,\Omega_1\cup\Omega_2}.$$
Since $\mathbf{v}_{h,0}\in V_h$ and $\mathbf{v}_{h,0}|_{\partial\Omega}=0$, $[\mathbf{v}_{h,0}]|_{\Gamma}=0$, we have
$$b_h(p_0,\mathbf{v}_{h,0})=-\sum_{K\in G^\Gamma_h}\int_{\Gamma_K}[p_0]\mathbf{v}_{h,0}\cdot n,\ \ \ b_h(\widetilde{p}_0,\mathbf{v}_{h,0})=-\sum_{K\in G^\Gamma_h}\int_{\Gamma_K}[\widetilde{p}_0]\mathbf{v}_{h,0}\cdot n$$
then $b_h(p_0,\mathbf{v}_{h,0})=C(\mu,\Omega)b_h(\widetilde{p}_0,\mathbf{v}_{h,0})$.
Further, we obtain
\begin{equation}
\begin{aligned}
b_h(p_0,\mathbf{v}_{h,0})&=C(\mu,\Omega)b_h(\widetilde{p}_0,\mathbf{v}_{h,0})\geq (C_1-ch^{1/2})C(\mu,\Omega)\norm{\widetilde{p}_0}^2_{0,\Omega_1\cup\Omega_2}\\
&\geq C_{1,p_0}\norm{\mu^{-1/2}p_0}^2_{0,\Omega_1\cup\Omega_2},
\end{aligned}
\end{equation}
provided $h$ is sufficiently small.
Finally, from Lemma~\ref{tr_cut}, standard trace inequality and \eqref{poincare}, we obtain
\begin{equation}
\begin{aligned}
\norme{\mathbf{v}_{h,0}}&\leq C\mu^{1/2}_{max}\norm{\nabla\mathbf{v}_{h,0}}_{0,\Omega_1\cup\Omega_2}\\
&=C\mu^{1/2}_{max}\norm{\widetilde{p}_0}_{0,\Omega_1\cup\Omega_2}\leq C\widetilde{C}^{-1/2}\norm{\mu^{-1/2}p_0}_{0,\Omega_1\cup\Omega_2}.
\end{aligned}
\end{equation}
\end{proof}

\begin{lemma}\label{inf-sup-bh-2}
Suppose that $h$ is sufficiently small. For any $p^{\perp}_{h,0}=(p^{\perp}_{h,1},p^{\perp}_{h,2})\in M^{\perp}_{h,0}$, there exist $\widetilde{\mathbf{v}}_h \in V_h$ and positive constants $C_{1,p^{\perp}_{h,0}}$, $C_{2,p^{\perp}_{h,0}}$ and $C_{3,p^{\perp}_{h,0}}$ such that
$$b_h(p^{\perp}_{h,0},\widetilde{\mathbf{v}}_h)\geq C_{1,p^{\perp}_{h,0}}\norm{\mu^{-1/2}p^{\perp}_{h,0}}^2_{\Omega_1\cup\Omega_2}-C_{2,p^{\perp}_{h,0}}J_p(p^{\perp}_{h,0},p^{\perp}_{h,0})$$
and
$$\norme{\widetilde{\mathbf{v}}_h}\leq C_{3,p^{\perp}_{h,0}}\norm{\mu^{-1/2}p^{\perp}_{h,0}}_{\Omega_1\cup\Omega_2}.$$
\end{lemma}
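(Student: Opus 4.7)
The plan is to build $\widetilde{\mathbf{v}}_h$ through a Fortin-type construction that exploits the stronger orthogonality enjoyed by $M^{\perp}_{h,0}$. Since $p^{\perp}_{h,0}\in M^{\perp}_{h,0}$ satisfies $(p^{\perp}_{h,i},1)_{\Omega_i}=0$ for each $i=1,2$, the function $\mu_i^{-1}p^{\perp}_{h,i}$ has zero mean on the Lipschitz subdomain $\Omega_i$. By the continuous surjectivity of the divergence on $[H^1_0(\Omega_i)]^2$, there exists $\mathbf{w}_i\in [H^1_0(\Omega_i)]^2$ with $\nabla\cdot\mathbf{w}_i=\mu_i^{-1}p^{\perp}_{h,i}$ and $\mu_i^{1/2}|\mathbf{w}_i|_{1,\Omega_i}\lesssim \norm{\mu^{-1/2}p^{\perp}_{h,i}}_{0,\Omega_i}$. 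Because $\mathbf{w}_i$ vanishes on $\Gamma\subset\pa\Omega_i$, the global piecewise definition $\mathbf{w}|_{\Omega_i}:=\mathbf{w}_i$ lies in $[H^1_0(\Omega)]^2$ and carries no jump across $\Gamma$.

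Next, I would fix a bounded extension $E_i:H^1(\Omega_i)\to H^1(\Omega_{h,i}^{+})$ and define $\widetilde{\mathbf{v}}_h\in V_h$ by applying componentwise the edge-integral preserving nonconforming $P_1$ interpolant $\pi^i_h$ to $E_i\mathbf{w}_i$ on $\T_{h,i}$. The defining property of $\pi^i_h$ gives the Fortin identity $\int_K \nabla\cdot\widetilde{\mathbf{v}}_h = \int_K \nabla\cdot E_i\mathbf{w}_i$ for every $K\in \T_{h,i}$. Standard stability of $\pi^i_h$ and $E_i$, combined with the trace inequalities in Lemma~\ref{tr_cut} and Lemma~\ref{tr_intf} to control the cut-edge and interface contributions of $\norme{\widetilde{\mathbf{v}}_h}$, deliver the second inequality $\norme{\widetilde{\mathbf{v}}_h}\leq C_{3,p^{\perp}_{h,0}}\norm{\mu^{-1/2}p^{\perp}_{h,0}}_{0,\Omega_1\cup\Omega_2}$.

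To evaluate $b_h(p^{\perp}_{h,0},\widetilde{\mathbf{v}}_h)$ I would apply the divergence theorem on each $K\cap\Omega_i$, using that $p^{\perp}_{h,0}$ is piecewise constant. On uncut elements the Fortin identity directly produces $-\int_{K}p^{\perp}_{h,i}\nabla\cdot\mathbf{w}_i=-\int_{K}\mu_i^{-1}(p^{\perp}_{h,i})^{2}$, which when summed over $\T_{h,i}\setminus G^{\Gamma}_h$ yields the dominant positive term $\sum_i\norm{\mu^{-1/2}p^{\perp}_{h,i}}^2_{0,\Omega_{h,i}^{-}}$. On cut elements the discrepancy between the partial integral $\int_{K\cap\Omega_i}\nabla\cdot\widetilde{\mathbf{v}}_h$ and $\int_K \nabla\cdot E_i\mathbf{w}_i$, together with the interface integrals on $\Gamma_K$ and the cut-edge terms on $\widetilde{e}\in \F^{cut}_{h,i}$ already present in $b_h$, produces a remainder $R$ whose integrand is a product of jumps or averages of the piecewise-constant $p^{\perp}_{h,0}$ against traces of $\widetilde{\mathbf{v}}_h$ or $\mathbf{w}$ on $\Gamma_K$ and on $\widetilde{e}$.

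The main obstacle is to absorb this remainder into the admitted right-hand side. Using Cauchy--Schwarz I would bound traces of $\widetilde{\mathbf{v}}_h$ and $\mathbf{w}$ on cut edges and interface arcs, via Lemma~\ref{tr_cut}, Lemma~\ref{tr_intf} and the Poincar\'e inequality \eqref{poincare}, by local volume norms that are controlled by $\norme{\widetilde{\mathbf{v}}_h}$ and hence by $\norm{\mu^{-1/2}p^{\perp}_{h,0}}_{0,\Omega_1\cup\Omega_2}$. The remaining pressure factors, namely $\norm{\{p^{\perp}_{h,0}\}_w}_{0,\Gamma_K}$, $\norm{\{p^{\perp}_{h,0}\}_k}_{0,\widetilde{e}}$ and the gap on cut elements, are traded through the telescoping bound \eqref{bh_proof0} used in Lemma~\ref{continu_bh}, giving on the one hand $\norm{p^{\perp}_{h,0}}^2_{0,\Omega_{h,i}^{-}}$ (already present in the dominant term) and on the other hand $\sum_{e\in\F^{\Gamma}_{h,i}}|e|\,\norm{[p^{\perp}_{h,0}]}^2_{0,e}\lesssim J_p(p^{\perp}_{h,0},p^{\perp}_{h,0})$. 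Using Young's inequality I absorb a small multiple of $\norm{\mu^{-1/2}p^{\perp}_{h,0}}^2_{0,\Omega_1\cup\Omega_2}$ into the main term (assuming $h$ small enough), keep the residual as $-C_{2,p^{\perp}_{h,0}}J_p(p^{\perp}_{h,0},p^{\perp}_{h,0})$, and recover the dominant coefficient $C_{1,p^{\perp}_{h,0}}$ of $\norm{\mu^{-1/2}p^{\perp}_{h,0}}^2_{0,\Omega_1\cup\Omega_2}$.
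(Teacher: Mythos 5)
Your construction is the classical Fortin--Verf\"urth argument (continuous right inverse of the divergence on each $\Omega_i$, extension, Crouzeix--Raviart interpolation). It is a genuinely different route from the paper's, but it has a gap at the absorption step that I do not see how to close. Because your $\widetilde{\mathbf{v}}_h$ is supported on all of $\Omega^{+}_{h,i}$, including the cut elements, the remainder $R$ is not small relative to the main term: on $K\in G^{\Gamma}_h$ the discrepancy between $\int_{K\cap\Omega_i}\nabla\cdot\widetilde{\mathbf{v}}_h$ and the Fortin identity over all of $K$ is bounded only by $\norm{p^{\perp}_{h,i}}_{0,K}\norm{\nabla\mathbf{w}_i}_{0,K}$ (and the $\Gamma_K$ and $\F^{cut}_{h,i}$ contributions involve the \emph{averages} $\{p^{\perp}_{h,0}\}_w$, $\{p^{\perp}_{h,0}\}_k$, not jumps, so they are of the same type). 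Summing and using $\norm{\nabla\mathbf{w}}_{0,\Omega}\lesssim\norm{\mu^{-1/2}p^{\perp}_{h,0}}_{0,\Omega_1\cup\Omega_2}$ gives $|R|\leq C\,\norm{\mu^{-1/2}p^{\perp}_{h,0}}_{0,\mathrm{cut}}\,\norm{\mu^{-1/2}p^{\perp}_{h,0}}_{0,\Omega_1\cup\Omega_2}$ with a generic constant from trace and extension estimates. The telescoping bound \eqref{bh_3} converts the cut-region norm into $\norm{\mu^{-1/2}p^{\perp}_{h,0}}_{0,\Omega^{-}_{h,i}}$ plus $J_p(p^{\perp}_{h,0},p^{\perp}_{h,0})^{1/2}$, but with a constant $C_p$ that is not small; after Young's inequality you are left with a multiple of $\norm{\mu^{-1/2}p^{\perp}_{h,0}}^2_{0,\Omega^{-}_{h,i}}$ whose coefficient has no reason to be smaller than that of your dominant positive term, which is fixed by the continuous inf-sup constant of $\Omega_i$. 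There is no small parameter: $|K\setminus\Omega_i|/|K|$ need not be small for a general cut, and $J_p$ alone does not control the cut-region pressure (take $p^{\perp}_{h,0}$ locally constant near $\Gamma$). So the claimed absorption fails.

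The paper sidesteps this entirely by choosing the discrete test field to \emph{vanish on every cut element} of $G^{\Gamma}_h$ and on $\partial\Omega^{-}_{h,i}$: all interface, cut-edge and partial-element contributions to $b_h$ then drop out identically, $b_h(p^{\perp}_{h,0},\widetilde{\mathbf{v}}_{h,i})$ reduces to the clean volume term over $\Omega^{-}_{h,i}$, and the full-domain norm is recovered afterwards through \eqref{bh_3}, whose $J_p$ cost is exactly what the $-C_{2,p^{\perp}_{h,0}}J_p$ term in the statement is designed to accept. Two smaller points: your identity $-\int_K p^{\perp}_{h,i}\nabla\cdot\mathbf{w}_i=-\int_K\mu_i^{-1}(p^{\perp}_{h,i})^2$ is \emph{negative}, so you need $\nabla\cdot\mathbf{w}_i=-\mu_i^{-1}p^{\perp}_{h,i}$ given the sign convention in $b_h$ (trivial fix); and even on the uncut part your dominant term is only over $\Omega^{-}_{h,i}$, so the upgrade to $\norm{\mu^{-1/2}p^{\perp}_{h,0}}^2_{0,\Omega_1\cup\Omega_2}$ via \eqref{bh_3} must be carried out explicitly, not only invoked for the remainder.
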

\begin{proof}
Define $\alpha_i=\frac{1}{|\Omega^{-}_{h,i}|}(p^{\perp}_{h,i},1)_{\Omega^{-}_{h,i}}$ and $q_{h,i}=p^{\perp}_{h,i}-\alpha_i$, then $(q_{h,i},1)_{\Omega^{-}_{h,i}}=0$. Let $q_h=(q_{h,1},q_{h,2})$. From the inf-sup stability of nonconforming-$P_1/P_0$, there exist $\widetilde{\mathbf{v}}_{h,i}=(\mathbf{v}_{h,1},\mathbf{v}_{h,2})\in V_h$ with $\mathbf{v}_{h,i}=0$ on $G^{\Gamma}_h$, on $\partial\Omega^{-}_{h,i}$ and on $\partial\Omega$, and $\mathbf{v}_{h,j}=0$ for $j\neq i$ such that
$$\norme{\widetilde{\mathbf{v}}_{h,i}}=\norm{\mu_i^{-1/2}q_{h,i}}_{0,\Omega_i},\ \ b_h(q_h,\widetilde{\mathbf{v}}_{h,i})\geq C\norm{\mu_i^{-1/2}q_{h,i}}_{0,\Omega^{-}_{h,i}}\norm{\mu_i^{-1/2}q_{h,i}}_{0,\Omega_i}.$$
From Lemma~\ref{continu_bh}, we have
\begin{equation}\label{inf-sup-bh-21}
\begin{aligned}
\norm{\mu_i^{-1/2}q_{h,i}}^2_{0,\Omega_i}&\leq C_p\left(\norm{\mu_i^{-1/2}q_{h,i}}^2_{0,\Omega^{-}_{h,i}}+J_p(q_{h},q_{h})\right)\\
&\leq C_p\left(C^{-1}b_h(q_h,\widetilde{\mathbf{v}}_{h,i})+J_p(q_{h},q_{h})\right).
\end{aligned}
\end{equation}
Using the fact $\widetilde{\mathbf{v}}_{h,i}\in V_h$ and $p^{\perp}_{h,0}\in M^{\perp}_{h,0}$, we note that $b_h(q_h,\widetilde{\mathbf{v}}_{h,i})=b_h(p^{\perp}_{h,0},\widetilde{\mathbf{v}}_{h,i})$, $J_p(q_{h},q_{h})=J_p(p^{\perp}_{h,0},p^{\perp}_{h,0})$,
\begin{equation}\label{inf-sup-bh-22}
\begin{aligned}
|\alpha_i|&=\frac{1}{|\Omega^{-}_{h,i}|}|(p^{\perp}_{h,i},1)_{\Omega^{-}_{h,i}}|
=\frac{1}{|\Omega^{-}_{h,i}|}|(p^{\perp}_{h,i},1)_{\Omega_{i}}-(p^{\perp}_{h,i},1)_{\Omega_i\setminus \Omega^{-}_{h,i}}|\\
&=\frac{1}{|\Omega^{-}_{h,i}|}|(p^{\perp}_{h,i},1)_{\Omega_i\setminus \Omega^{-}_{h,i}}|\leq \frac{|{\Omega_i\setminus \Omega^{-}_{h,i}}|^{1/2}}{|\Omega^{-}_{h,i}|}\norm{p^{\perp}_{h,i}}_{0,\Omega_i}\leq Ch^{1/2}\norm{p^{\perp}_{h,i}}_{0,\Omega_i}.
\end{aligned}
\end{equation}

From \eqref{inf-sup-bh-21} and \eqref{inf-sup-bh-22}, we get
\begin{equation}
\begin{aligned}
&C_p\left(C^{-1}b_h(p^{\perp}_{h,0},\widetilde{\mathbf{v}}_{h,i})+J_p(p^{\perp}_{h,0},p^{\perp}_{h,0})\right)\geq
\norm{\mu_i^{-1/2}q_{h,i}}^2_{0,\Omega_i}\\
&\geq\norm{\mu_i^{-1/2}p^{\perp}_{h,i}}^2_{0,\Omega_i}-\mu_i^{-1}|\Omega_i|\alpha_i^2\geq (1-Ch)\norm{\mu_i^{-1/2}p^{\perp}_{h,i}}^2_{0,\Omega_i},
\end{aligned}
\end{equation}
and
\begin{equation}
\norme{\widetilde{\mathbf{v}}_{h,i}}\leq C\norm{\mu_i^{-1/2}p^{\perp}_{h,i}}_{0,\Omega_i}.
\end{equation}
Finally, taking $\widetilde{\mathbf{v}}_{h}=\widetilde{\mathbf{v}}_{h,1}+\widetilde{\mathbf{v}}_{h,2}$, we have complete the proof for $h$ sufficiently small.
\end{proof}

\begin{lemma}\label{inf-sup-bh-3}
Suppose that $h$ is sufficiently small. For any $p_h\in Q_h$, there exist ${\mathbf{v}}_h\in V_h$ and positive constants $C_1$, $C_2$ and $C_3$ such that
$$b_h(p_h,{\mathbf{v}}_h)\geq C_1\norm{\mu^{-1/2}p_h}^2_{\Omega_1\cup\Omega_2}-C_2J_p(p_h,p_h),\ \ \norme{{\mathbf{v}}_h}\leq C_3\norm{\mu^{-1/2}p_h}^2_{\Omega_1\cup\Omega_2}.$$
\end{lemma}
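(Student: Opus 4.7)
The natural plan is to combine Lemmas~\ref{inf-sup-bh-1} and \ref{inf-sup-bh-2} via the splitting $p_h = p_0 + p_{h,0}^{\perp} \in M_0 \oplus M_{h,0}^{\perp}$. First I would verify that this splitting is orthogonal in the $\mu^{-1}$-weighted inner product: since $\mu^{-1}\overline{p}_{\mu}$ equals the constant $|\Omega_1|^{-1}$ on $\Omega_1$ and $-|\Omega_2|^{-1}$ on $\Omega_2$ while $p_{h,0}^{\perp}$ has zero mean on each $\Omega_i$, the cross product $(\mu^{-1} p_0, p_{h,0}^{\perp})_{\Omega_1\cup\Omega_2}$ vanishes, so that
$$\norm{\mu^{-1/2} p_h}^2_{0,\Omega_1\cup\Omega_2} = \norm{\mu^{-1/2} p_0}^2_{0,\Omega_1\cup\Omega_2} + \norm{\mu^{-1/2} p_{h,0}^{\perp}}^2_{0,\Omega_1\cup\Omega_2}.$$
A second observation, used repeatedly below, is that $p_0$ is constant on each $\Omega_i$, so $[p_0]$ vanishes on every $e \in \F^{\Gamma}_{h,i}$ and $\widetilde{e} \in \F^{cut}_{h,i}$; hence $J_p(p_0,\cdot)\equiv 0$, and in particular $J_p(p_h,p_h) = J_p(p_{h,0}^{\perp}, p_{h,0}^{\perp})$.

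I would apply Lemma~\ref{inf-sup-bh-1} to $p_0$ to obtain $\mathbf{v}_{h,0}\in V_h$ and Lemma~\ref{inf-sup-bh-2} to $p_{h,0}^{\perp}$ to obtain $\widetilde{\mathbf{v}}_h\in V_h$, and then set $\mathbf{v}_h := \widetilde{\mathbf{v}}_h + \delta\,\mathbf{v}_{h,0}$ for a parameter $\delta > 0$ to be calibrated. Expanding
\ban{
b_h(p_h, \mathbf{v}_h) = b_h(p_{h,0}^{\perp}, \widetilde{\mathbf{v}}_h) + \delta\, b_h(p_0, \mathbf{v}_{h,0}) + b_h(p_0, \widetilde{\mathbf{v}}_h) + \delta\, b_h(p_{h,0}^{\perp}, \mathbf{v}_{h,0}),
}
the first two (diagonal) contributions are bounded from below directly by Lemmas~\ref{inf-sup-bh-2} and \ref{inf-sup-bh-1}. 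For the two cross terms I would invoke \eqref{bh_2} of Lemma~\ref{continu_bh} together with the norm controls on $\widetilde{\mathbf{v}}_h, \mathbf{v}_{h,0}$ and $J_p(p_0,p_0)=0$ to get
\ban{
|b_h(p_0, \widetilde{\mathbf{v}}_h)| &\lesssim \norm{\mu^{-1/2} p_{h,0}^{\perp}}_{0,\Omega_1\cup\Omega_2}\norm{\mu^{-1/2} p_0}_{0,\Omega_1\cup\Omega_2},\\
|b_h(p_{h,0}^{\perp}, \mathbf{v}_{h,0})| &\lesssim \norm{\mu^{-1/2} p_0}_{0,\Omega_1\cup\Omega_2}\bigl(\norm{\mu^{-1/2} p_{h,0}^{\perp}}^2_{0,\Omega_1\cup\Omega_2} + J_p(p_{h,0}^{\perp}, p_{h,0}^{\perp})\bigr)^{1/2}.
}
Young's inequality with parameters tuned so that the $p_{h,0}^{\perp}$- and $J_p$-contributions from the cross terms are absorbed into $\tfrac{1}{2}C_{1,p_{h,0}^{\perp}}\norm{\mu^{-1/2} p_{h,0}^{\perp}}^2$ and $C_{2,p_{h,0}^{\perp}}J_p(p_{h,0}^{\perp},p_{h,0}^{\perp})$ leaves a lower bound of the form
$$b_h(p_h, \mathbf{v}_h) \geq (\delta C_{1,p_0} - A - B\delta^2)\norm{\mu^{-1/2} p_0}^2_{0,\Omega_1\cup\Omega_2} + c\norm{\mu^{-1/2} p_{h,0}^{\perp}}^2_{0,\Omega_1\cup\Omega_2} - C\, J_p(p_h, p_h),$$
with positive constants $A,B,c,C$ depending only on the constants in the three previous lemmas. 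Taking $\delta$ large enough to beat $A$ and small enough that $B\delta^2<\delta C_{1,p_0}/2$ then gives the claimed lower bound via the orthogonality identity.

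The norm bound follows at once from the triangle inequality, the norm controls of Lemmas~\ref{inf-sup-bh-1}--\ref{inf-sup-bh-2}, and the same orthogonality:
$$\norme{\mathbf{v}_h} \leq \norme{\widetilde{\mathbf{v}}_h} + \delta\, \norme{\mathbf{v}_{h,0}} \lesssim \norm{\mu^{-1/2} p_{h,0}^{\perp}}_{0,\Omega_1\cup\Omega_2} + \norm{\mu^{-1/2} p_0}_{0,\Omega_1\cup\Omega_2} \lesssim \norm{\mu^{-1/2} p_h}_{0,\Omega_1\cup\Omega_2}.$$
The main obstacle is the $\delta$-calibration in the previous step: the non-$\delta$-scaled constant $A$ produced by the first cross term forces a lower bound on $\delta$, while the $B\delta^2$ contribution from Young on the second cross term forces an upper bound, and verifying that the admissible window is non-empty requires one to keep track of how the explicit constants from Lemmas~\ref{inf-sup-bh-1}, \ref{inf-sup-bh-2} and \ref{continu_bh} combine. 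Once this calibration is carried out, everything else is routine manipulation.
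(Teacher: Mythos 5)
Your overall strategy is the paper's: split $p_h=p_0+p^{\perp}_{h,0}$, take $\mathbf{v}_{h,0}$ from Lemma~\ref{inf-sup-bh-1} and $\widetilde{\mathbf{v}}_h$ from Lemma~\ref{inf-sup-bh-2}, and combine them with one scaling parameter. The orthogonality of the splitting in the $\mu^{-1}$-weighted inner product, the identity $J_p(p_h,p_h)=J_p(p^{\perp}_{h,0},p^{\perp}_{h,0})$, and the final norm bound are all fine. But there is a genuine gap exactly where you flag "the main obstacle": you treat \emph{both} cross terms as generic terms to be absorbed by Young's inequality, and this leaves you with a lower bound of the form $\delta C_{1,p_0}-A-B\delta^2$ on the coefficient of $\norm{\mu^{-1/2}p_0}^2$, where $A$ comes from $b_h(p_0,\widetilde{\mathbf{v}}_h)$ (not scaled by $\delta$) and $B\delta^2$ from $\delta\,b_h(p^{\perp}_{h,0},\mathbf{v}_{h,0})$. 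The admissible window for $\delta$ is non-empty only if, roughly, $C_{1,p_0}^2\gtrsim AB$; since $A$ and $B$ are built from continuity constants of $b_h$ and the norm bounds $C_{2,p_0}$, $C_{3,p^{\perp}_{h,0}}$, while $C_{1,p_0}$ is an inf-sup (coercivity-type) constant, there is no reason for this product condition to hold, and you do not establish it. As written, the argument does not close.

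The missing idea is that the first cross term is not merely small -- it is exactly zero. By construction in Lemma~\ref{inf-sup-bh-2}, $\widetilde{\mathbf{v}}_h$ vanishes on $G^{\Gamma}_h$, on $\partial\Omega^{-}_{h,i}$ and on $\partial\Omega$, while $p_0\in M_0$ is constant on each $\Omega^{-}_{h,i}$; hence all interface and cut-edge contributions to $b_h(p_0,\widetilde{\mathbf{v}}_h)$ drop out and the volume term integrates to zero by the divergence theorem, giving $b_h(p_0,\widetilde{\mathbf{v}}_h)=0$. With that observation only one cross term survives, the quadratic-in-$\delta$ obstruction disappears, and the calibration always succeeds: in your parametrization $\mathbf{v}_h=\widetilde{\mathbf{v}}_h+\delta\mathbf{v}_{h,0}$ you would take $\delta$ small, whereas the paper writes $\mathbf{v}_h=\mathbf{v}_{h,0}+\zeta\widetilde{\mathbf{v}}_h$ and takes $\zeta$ large; the two are equivalent. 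You should replace the estimate $|b_h(p_0,\widetilde{\mathbf{v}}_h)|\lesssim\norm{\mu^{-1/2}p^{\perp}_{h,0}}\norm{\mu^{-1/2}p_0}$ by this exact cancellation; the rest of your argument then goes through.
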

\begin{proof}
For any $p_h\in Q_h$, we have $p_h=p_0+p^{\perp}_{h,0}$, where $p_0\in M_0$ and $p^{\perp}_{h,0}\in M^{\perp}_{h,0}$. Let $\mathbf{v}_{h,0}$ be such that Lemma~\ref{inf-sup-bh-1} is satisfied and $\widetilde{\mathbf{v}}_{h}$ such that Lemma~\ref{inf-sup-bh-2} is satisfied. Note that $\widetilde{\mathbf{v}}_{h}=0$ on $\partial\Omega^{-}_{h,i}\cup\partial\Omega$ and $p_0$ is constant on $\Omega^{-}_{h,i}$, hence $b_h(p_0,\widetilde{\mathbf{v}}_{h})=0$. For $\zeta>0$, define $\mathbf{v}_h =\mathbf{v}_{h,0}+\zeta\widetilde{\mathbf{v}}_{h}$. We then obtain
\begin{equation}\label{inf-sup-bh-31}
\begin{aligned}
b_h(p_h,\mathbf{v}_h)&=b_h(p_0,\mathbf{v}_{h,0})+b_h(p^{\perp}_{h,0},\mathbf{v}_{h,0})+\zeta b_h(p^{\perp}_{h,0},\widetilde{\mathbf{v}}_{h})\\
&\geq C_{1,p_0}\norm{\mu^{-1/2}p_0}^2_{0,\Omega_1\cup\Omega_2}+b_h(p^{\perp}_{h,0},\mathbf{v}_{h,0})\\
&\ \ \ \ +\zeta\left(C_{1,p^{\perp}_{h,0}}\norm{\mu^{-1/2}p^{\perp}_{h,0}}^2_{0,\Omega_1\cup\Omega_2}
-C_{2,p^{\perp}_{h,0}}J_p(p^{\perp}_{h,0},p^{\perp}_{h,0})\right).
\end{aligned}
\end{equation}
Since $p_0$ is constant on each $\Omega^{+}_{h,i}$, we have $J_p(p^{\perp}_{h,0},p^{\perp}_{h,0})=J_p(p_h,p_h)$. Note that $\mathbf{v}_{h,0}\in V_h$ with $\mathbf{v}_{h,0}|_{\partial\Omega}=0$ and $[\mathbf{v}_{h,0}]|_\Gamma=0$. Further, similar to the estimates of \eqref{bh_p0_1} and \eqref{bh_p0_2}, the following estimates hold
\begin{equation}
\begin{aligned}
b_h(p^{\perp}_{h,0},\mathbf{v}_{h,0})&=\sum^2_{i=1}\left(-\sum_{K\in \T_{h,i}}\int_{K\cap\Omega_i}p^{\perp}_{h,0}\nabla\cdot\mathbf{v}_{h,0}+\sum_{\tilde{e}\in \F^{cut}_{h,i}}\int_{\tilde{e}}\{p_{h,0}^{\perp}\}_k[\mathbf{v}_{h,0}\cdot \mathbf{n}_{\tilde{e}}]\right)\\
&\geq -C\norme{\mathbf{v}_{h,0}}\norm{\mu^{-1/2}p^{\perp}_{h,0}}_{0,\Omega_1\cup\Omega_2}.
\end{aligned}
\end{equation}
Thus, \eqref{inf-sup-bh-31} can be estimated by
\begin{equation}\label{inf-sup-bh-32}
\begin{aligned}
b_h(p_h,\mathbf{v}_h)&\geq C_{1,p_0}\norm{\mu^{-1/2}p_0}^2_{0,\Omega_1\cup\Omega_2}\\
&\ \ \ \ - CC_{2,p_0}\norm{\mu^{-1/2}p_0}_{0,\Omega_1\cup\Omega_2}\norm{\mu^{-1/2}p^{\perp}_{h,0}}_{0,\Omega_1\cup\Omega_2}\\
&\ \ \ \ +\zeta\left(C_{1,p^{\perp}_{h,0}}\norm{\mu^{-1/2}p^{\perp}_{h,0}}^2_{0,\Omega_1\cup\Omega_2}
-C_{2,p^{\perp}_{h,0}}J_p(p^{\perp}_{h,0},p^{\perp}_{h,0})\right)\\
&\geq \left(C_{1,p_0}-\frac{CC_{2,p_0}\varepsilon}{2}\right)\norm{\mu^{-1/2}p_0}^2_{0,\Omega_1\cup\Omega_2}-\zeta C_{2,p^{\perp}_{h,0}}J_p(p_{h},p_{h})\\
&\ \ \ \ +\left(\zeta C_{1,p^{\perp}_{h,0}}-\frac{CC_{2,p_0}}{2\varepsilon}\right)\norm{\mu^{-1/2}p^{\perp}_{h,0}}^2_{0,\Omega_1\cup\Omega_2}\\
&\geq C_1\norm{\mu^{-1/2}p_h}^2_{0,\Omega_1\cup\Omega_2}-C_2J_p(p_{h},p_{h}),
\end{aligned}
\end{equation}
where $\varepsilon=\frac{C_{1,p_0}}{CC_{2,p_0}}$ and $\zeta=\frac{C^2C^2_{2,p_0}}{C_{1,p_0}C_{1,p^{\perp}_{h,0}}}$. Finally, we have
$$\norme{\mathbf{v}_h}\leq \norme{\mathbf{v}_{h,0}}+\zeta\norme{\widetilde{\mathbf{v}}_{h}}\leq C_3\norm{\mu^{-1/2}p_{h}}_{0,\Omega_1\cup\Omega_2},$$
and combining this with \eqref{inf-sup-bh-32} completes the proof.
\end{proof}

Now we prove the inf-sup stability of our scheme.
\begin{theorem}
Suppose that $\gamma_i, i=0,1,2$ are large enough and $h$ is small enough. Let $(\mathbf{u}_h,p_h)\in V_h\times Q_h$, then there exist a constant $C_s$ such that
\begin{equation}
\sup_{0\neq(\mathbf{v}_h,q_h)\in V_h\times Q_h}\frac{B_h[(\mathbf{u}_h,p_h),(\mathbf{v}_h,q_h)]}{\norme{(\mathbf{v}_h,q_h)}}\geq C_s\norme{(\mathbf{u}_h,p_h)}.
\end{equation}
\end{theorem}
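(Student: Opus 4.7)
My plan is the standard saddle-point argument: combine coercivity of $A_h$ on $V_h$ (Lemma~\ref{ah-coer}) with the inf-sup property of $b_h$ on $V_h\times Q_h$ modulo the penalty $J_p$ (Lemma~\ref{inf-sup-bh-3}), via a convex combination of two test pairs.

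First I would fix $(\mathbf{u}_h,p_h)\in V_h\times Q_h$ and test with $(\mathbf{u}_h,p_h)$ itself. Using the antisymmetric structure of $b_h$ inside $B_h$, the two off-diagonal terms cancel, so
\begin{equation*}
B_h[(\mathbf{u}_h,p_h),(\mathbf{u}_h,p_h)]=A_h(\mathbf{u}_h,\mathbf{u}_h)+J_p(p_h,p_h)\;\geq\;\tfrac{1}{2}\norme{\mathbf{u}_h}^2+J_p(p_h,p_h),
\end{equation*}
by Lemma~\ref{ah-coer}. This controls the velocity energy norm and the pressure jump penalty, but gives no control over $\norm{\mu^{-1/2}p_h}_{0,\Omega_1\cup\Omega_2}$.

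To recover the missing pressure term, invoke Lemma~\ref{inf-sup-bh-3} to produce $\mathbf{w}_h\in V_h$ with
$b_h(p_h,\mathbf{w}_h)\ge C_1\norm{\mu^{-1/2}p_h}_{0,\Omega_1\cup\Omega_2}^2-C_2J_p(p_h,p_h)$ and
$\norme{\mathbf{w}_h}\le C_3\norm{\mu^{-1/2}p_h}_{0,\Omega_1\cup\Omega_2}$. Then test with the combined pair $(\mathbf{u}_h+\alpha\mathbf{w}_h,\,p_h)$ for a small parameter $\alpha>0$ to be fixed. Using again the cancellation of the $b_h(p_h,\mathbf{u}_h)$ terms,
\begin{equation*}
B_h[(\mathbf{u}_h,p_h),(\mathbf{u}_h+\alpha\mathbf{w}_h,p_h)]
=A_h(\mathbf{u}_h,\mathbf{u}_h)+\alpha A_h(\mathbf{u}_h,\mathbf{w}_h)+\alpha\,b_h(p_h,\mathbf{w}_h)+J_p(p_h,p_h).
\end{equation*}
The cross term $\alpha A_h(\mathbf{u}_h,\mathbf{w}_h)$ is the only obstacle; I would bound it by the discrete continuity $A_h(\mathbf{u}_h,\mathbf{w}_h)\lesssim \norme{\mathbf{u}_h}\,\norme{\mathbf{w}_h}$ (which follows from Lemma~\ref{continu_ah} together with $\norme{\cdot}_V\lesssim\norme{\cdot}$ on $V_h$) and then by $\norme{\mathbf{w}_h}\le C_3\norm{\mu^{-1/2}p_h}_{0,\Omega_1\cup\Omega_2}$. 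Young's inequality with a tunable weight $\delta$ splits this into $\tfrac{1}{4}\norme{\mathbf{u}_h}^2$ (absorbed into coercivity) plus $\mathcal{O}(\alpha^2)\norm{\mu^{-1/2}p_h}_{0,\Omega_1\cup\Omega_2}^2$ (absorbed by the inf-sup gain of order $\alpha$).

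Collecting terms I would obtain
\begin{equation*}
B_h[(\mathbf{u}_h,p_h),(\mathbf{u}_h+\alpha\mathbf{w}_h,p_h)]
\;\ge\;\tfrac{1}{4}\norme{\mathbf{u}_h}^2+(\alpha C_1-C\alpha^2)\norm{\mu^{-1/2}p_h}_{0,\Omega_1\cup\Omega_2}^2+(1-\alpha C_2)J_p(p_h,p_h),
\end{equation*}
so that fixing $\alpha$ sufficiently small yields the lower bound $\gtrsim \norme{(\mathbf{u}_h,p_h)}^2$ by definition \eqref{vp_v}. For the denominator,
\begin{equation*}
\norme{(\mathbf{u}_h+\alpha\mathbf{w}_h,p_h)}^2\;\le\;2\norme{\mathbf{u}_h}^2+2\alpha^2C_3^2\norm{\mu^{-1/2}p_h}_{0,\Omega_1\cup\Omega_2}^2
+\norm{\mu^{-1/2}p_h}_{0,\Omega_1\cup\Omega_2}^2+J_p(p_h,p_h)\lesssim\norme{(\mathbf{u}_h,p_h)}^2,
\end{equation*}
so dividing gives the desired constant $C_s>0$. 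The main obstacle is really Lemma~\ref{inf-sup-bh-3}, which is already established; after that, the remaining work is the bookkeeping of the Young's-inequality tuning between $\alpha$, the coercivity constant of $A_h$, and the constants $C_1,C_2,C_3$ from Lemma~\ref{inf-sup-bh-3}.
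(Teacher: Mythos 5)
Your proposal is correct and follows essentially the same route as the paper's proof: test first with $(\mathbf{u}_h,p_h)$ and use Lemma~\ref{ah-coer}, then add $\alpha\mathbf{w}_h$ from Lemma~\ref{inf-sup-bh-3}, control the cross term $A_h(\mathbf{u}_h,\mathbf{w}_h)$ via Lemma~\ref{continu_ah} and Young's inequality, and finish by bounding $\norme{(\mathbf{u}_h+\alpha\mathbf{w}_h,p_h)}\lesssim\norme{(\mathbf{u}_h,p_h)}$. The only differences are notational (your $\alpha$ is the paper's $\eta$), so no further changes are needed.
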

\begin{proof}
First, we start by choosing $(\mathbf{v_h},q_h)=(\mathbf{u_h},p_h)$ to obtain
\begin{equation}
B_h[(\mathbf{u}_h,p_h),(\mathbf{u}_h,p_h)]=A_h(\mathbf{u}_h,\mathbf{u}_h)+J_p(p_h,p_h).
\end{equation}
Using {Lemma~\ref{ah-coer}}, we get
\begin{equation}\label{infsup_1}
B_h[(\mathbf{u}_h,p_h),(\mathbf{u}_h,p_h)]\geq \frac{1}{2}\norme{\mathbf{u_h}}^2+J_p(p_h,p_h).
\end{equation}
Next, from Lemma~\ref{inf-sup-bh-3}, we know that for $p_h\in Q_h$, there exist ${\mathbf{w}}_h\in V_h$ such that
\begin{equation}\label{infsup-12}
\norme{{\mathbf{w}}_h}\leq C_3\norm{\mu^{-1/2}p_h}_{0,\Omega_1\cup\Omega_2},
\end{equation}
and
\begin{equation}\label{infsup-13}
b_h(p_h,{\mathbf{w}}_h)\geq C_1\norm{\mu^{-1/2}p_h}^2_{0,\Omega_1\cup\Omega_2}-C_2J_p(p_h,p_h).
\end{equation}
From the definition of $B_h$, we have
\begin{equation}\label{infsup_2}
B_h[(\mathbf{u}_h,p_h),(\mathbf{w}_{h},0)]=A_h(\mathbf{u}_h,\mathbf{w}_{h})+b_h(p_h, \mathbf{w}_{h}).
\end{equation}
Using Lemma~\ref{continu_ah} and \eqref{infsup-12}, we infer that
\begin{equation}\label{infsup_3}
\begin{aligned}
A_h(\mathbf{u}_h,\mathbf{w}_{h})&\geq -C_{A_1}C^2_{A_3}\norme{\mathbf{u}_h}\  \norme{\mathbf{w}_{h}}\\
&\geq -C_AC_3\norme{\mathbf{u}_h}\ \norm{\mu^{-1/2}p_h}_{0,\Omega_1\cup\Omega_2},
\end{aligned}
\end{equation}
where $C_A=C_{A_1}C^2_{A_3}$.
Further, let $(\mathbf{v}_h,q_h)=(\mathbf{u}_h+\eta \mathbf{w}_{h},p_h)$, using \eqref{infsup_1},\eqref{infsup-13}, \eqref{infsup_2}, \eqref{infsup_3} and Young's inequality, we get
\begin{equation}
\begin{aligned}
&B_h[(\mathbf{u}_h,p_h),(\mathbf{u}_h+\eta \mathbf{w}_{h},p_h)]\\
&=B_h[(\mathbf{u}_h,p_h),(\mathbf{u}_h,p_h)]+\eta B_h[(\mathbf{u}_h,p_h),(\mathbf{w}_{h},0)]\\
&\geq \frac{1}{2}\norme{\mathbf{u}_h}^2+J_p(p_h,p_h)+C_1\eta\norm{\mu^{-1/2}p_h}^2_{0,\Omega_1\cup\Omega_2}\\
&\ \ \ \ -\eta C_2J_p(p_h,p_h)-\eta C_AC_3\norme{\mathbf{u}_h}\norm{\mu^{-1/2}p_h}_{0,\Omega_1\cup\Omega_2}\\
&\geq \left(\frac{1}{2}-\frac{C_AC_3\epsilon}{2}\right)\norme{\mathbf{u}_h}^2+(1-\eta C_2)J_p(p_h,p_h)\\
&\ \ \ \ +\left(C_1\eta-\frac{C_AC_3\eta^2}{2\epsilon}\right)\norm{\mu^{-1/2}p_h}^2_{0,\Omega_1\cup\Omega_2}\\
&\geq C_{1,s}\norme{(\mathbf{u}_h,p_h)}^2,
\end{aligned}
\end{equation}
the last inequality holds by choosing
$\epsilon=\frac{1}{2C_AC_3}$ and $0<\eta<\min\{\frac{C_1}{C^2_AC^2_3},\frac{1}{2C_2}\}$.
{Finally, the proof follows by employing
$$\norme{(\mathbf{u}_h+\eta \mathbf{w}_{h},p_h)}\leq\norme{(\mathbf{u}_h,p_h)}+\eta \norme{\mathbf{w}_{h}}\leq C_{2,s}\norme{(\mathbf{u}_h,p_h)}.$$}
\end{proof}

To obtain a priori error estimates, we need the interplation operators and their approximation errors. To show these, we need construct the extension operator $E^2_i: [H^2(\Omega_i)]^2\rightarrow [H^2(\Omega)]^2$, and $E^1_i: H^1(\Omega_i)\rightarrow H^1(\Omega),i=1,2$ such that $(E^2_i\mathbf{w}_i,E^1_ir_i)|_{\Omega_i}=(\mathbf{w}_i,r_i)$,
$$\norm{E^2_i\mathbf{w}_i}_{s,\Omega}\leq C\norm{\mathbf{w}_i}_{s,\Omega_i}, \forall\mathbf{w}_i\in [H^s(\Omega_i)]^2, s=0,1,2,$$
and
$$\norm{E^1_ir_i}_{t,\Omega}\leq C\norm{r_i}_{t,\Omega_i}, \ \forall r_i\in H^t(\Omega_i), t=0,1.$$

For any piecewise $H^2$ function $\mathbf{v}\in [H^2(\Omega_1\cup\Omega_2)]^2$ and any piecewise $H^1$ function $q\in H^1(\Omega_1\cup\Omega_2)$, from now on, we let $\mathbf{v}_i={E^2_i(\mathbf{v}|_{\Omega_i})}|_{\Omega^{+}_{h,i}}$, $q_i={E^1_i(q|_{\Omega_i})}|_{\Omega^{+}_{h,i}}$ be the extension of the restriction of $\mathbf{v}$ and $q$ on $\Omega_i$ to $\Omega^{+}_{h,i}, i=1,2$, respectively.
Let $\Pi^1_h$ be the standard Crouzeix-Raviart interpolant and $\Pi^0_h$ be the standard $L^2$-projection operator onto piecewise constants space. We define interpolations $I_h$ on $V_h$ and $R_h$ on $Q_h$ by
\begin{equation}\label{inter_ope}
((I_h\mathbf{v})|_{\Omega_i},(R_hq)|_{\Omega_i}) :=((I_h{\mathbf{v}}_i)|_{\Omega_i},(R_hq_i)|_{\Omega_i}), i=1,2,
\end{equation}
where $I_h{\mathbf{v}}_i=\Pi^1_h\mathbf{v}_i$ and $R_hq_i=\Pi^0_hq_i$. {Then, using the interpolation operators $I_h$ and $R_h$ defined above, we analyze the approximation properties of the proposed finite element space.}

\begin{theorem}\label{appr_err}
Suppose that $\mathbf{v}\in [H^2(\Omega_1\cup\Omega_2)]^2$ and $q\in H^1(\Omega_1\cup\Omega_2)$ and $(I_h\mathbf{v},R_hq)$ be a pair of interpolant operators defined as in \eqref{inter_ope}. Then
$$\norme{(\mathbf{v}-I_h\mathbf{v},q-R_hq)}_V \lesssim h\left(\mu^{1/2}_i|\mathbf{v}|_{2,\Omega_1\cup\Omega_2}+\mu^{-1/2}_i|q|_{1,\Omega_1\cup\Omega_2}\right).$$
\end{theorem}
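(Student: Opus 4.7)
The plan is to split $\norme{(\mathbf v-I_h\mathbf v,\,q-R_hq)}_V^2$ into its bulk, interface, cut--edge, and jump--stabilization contributions and to bound each piece by combining (i) the standard interpolation error estimates for $\Pi^1_h$ and $\Pi^0_h$ on each $K\in\T_{h,i}$, (ii) the trace inequalities of Lemma~\ref{tr_intf} on $\Ga_K$ and Lemma~\ref{tr_cut} on $\widetilde e$, and (iii) the $H^2$--/$H^1$--stability of the extensions $E^2_i,E^1_i$. Writing $\mathbf v_i=E^2_i(\mathbf v|_{\Om_i})$ and $q_i=E^1_i(q|_{\Om_i})$, the building blocks are
\[
\norm{\mathbf v_i-\Pi^1_h\mathbf v_i}_{0,K}+h_K|\mathbf v_i-\Pi^1_h\mathbf v_i|_{1,K}\lesssim h_K^2|\mathbf v_i|_{2,K},\qquad \norm{q_i-\Pi^0_h q_i}_{0,K}\lesssim h_K|q_i|_{1,K}.
\]

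The bulk energy piece $\sum_i\sum_K\norm{\sqrt{\mu_i}\na(\mathbf v-I_h\mathbf v)}_{0,K\cap\Om_i}^2$ and the pressure piece $\norm{\mu^{-1/2}(q-R_hq)}_{0,\Om_1\cup\Om_2}^2$ follow directly from these building blocks after restriction from $K$ to $K\cap\Om_i$, summation over $i$, and invocation of the extension stability, producing $h^2\mu_i|\mathbf v|_{2,\Om_1\cup\Om_2}^2$ and $h^2\mu_i^{-1}|q|_{1,\Om_1\cup\Om_2}^2$. For the terms supported on $\Ga_K$ with $K\in G^\Ga_h$ I would apply Lemma~\ref{tr_intf} successively to $\mathbf v_i-\Pi^1_h\mathbf v_i$, to $\na(\mathbf v_i-\Pi^1_h\mathbf v_i)$ (which lies in $H^1(K)$), and to $q_i-\Pi^0_h q_i$. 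Using the weight bounds $\{\mu\}_w\le 2\min_i\mu_i$ and $hw_i^2/\{\mu\}_w\le h/(2\mu_i)$, this controls $\frac{\{\mu\}_w}{h}\norm{[\mathbf v-I_h\mathbf v]}_{0,\Ga_K}^2$, $\frac{h}{\{\mu\}_w}\norm{\{\mu\na(\mathbf v-I_h\mathbf v)\cdot\mathbf n\}_w}_{0,\Ga_K}^2$ and $\frac{h}{\{\mu\}_w}\norm{\{q-R_hq\}_w}_{0,\Ga_K}^2$ by the building blocks on the parent element.

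For the cut--segment contributions on $\widetilde e\in\F^{cut}_{h,i}$ I invoke Lemma~\ref{tr_cut}, namely $|\widetilde e|^{-1}\norm{w}_{0,\widetilde e}^2\lesssim h_K^{-2}\norm{w}_{0,K}^2+\norm{\na w}_{0,K}^2+h_K^2|\na w|_{1,K}^2$, and apply it with $w=\mathbf v_i-\Pi^1_h\mathbf v_i$, $w=\na(\mathbf v_i-\Pi^1_h\mathbf v_i)\cdot\mathbf n_{\widetilde e}$ (the flux case is multiplied by $|\widetilde e|^2$), and $w=q_i-\Pi^0_h q_i$, thereby handling $|\widetilde e|^{-1}\mu_i\norm{[\mathbf v-I_h\mathbf v]}_{0,\widetilde e}^2$, $|\widetilde e|\mu_i\norm{\{\na(\mathbf v-I_h\mathbf v)\cdot\mathbf n_{\widetilde e}\}_k}_{0,\widetilde e}^2$ and $|\widetilde e|\mu_i^{-1}\norm{\{q-R_hq\}_k}_{0,\widetilde e}^2$. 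The stabilization $J_\mathbf u(\mathbf v-I_h\mathbf v,\mathbf v-I_h\mathbf v)$ uses that $\mathbf v_i\in H^2(\Om^+_{h,i})$ has no true normal--gradient jump across interior edges, so $[\na(\mathbf v-I_h\mathbf v)]=[\na(\mathbf v_i-\Pi^1_h\mathbf v_i)]$ on $e\in\F^\Ga_{h,i}$, which a standard scaled trace inequality bounds by $h^2\mu_i|\mathbf v_i|_{2,K_l\cup K_r}^2$; $J_p(q-R_hq,q-R_hq)$ is treated analogously for the piecewise constant projection.

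The main obstacle is to verify that all the $\mu$-- and cut--dependent prefactors combine into a bound independent of both the viscosity contrast $\mu_1/\mu_2$ and the position of $\Ga$ with respect to the mesh. Robustness in the mesh location is inherited from Lemmas~\ref{tr_intf} and~\ref{tr_cut}, whose right--hand sides measure $\mathbf v_i$ and $q_i$ on the whole parent element $K$ rather than on the (possibly tiny) subpiece $K\cap\Om_i$ or segment $\widetilde e$, so no small--cut factor enters. Robustness in the contrast follows from the harmonic--weight identities $\{\mu\}_w=2\mu_iw_i=2\mu_1\mu_2/(\mu_1+\mu_2)$ combined with $hw_i^2/\{\mu\}_w\le h/(2\mu_i)$, which ensure that each interface or flux term carries exactly the $\mu_i$ power needed to match $\mu_i^{1/2}$ (for $\mathbf v$) or $\mu_i^{-1/2}$ (for $q$) on the right--hand side.
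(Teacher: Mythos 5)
Your proposal follows essentially the same route as the paper: standard interpolation estimates for $\Pi^1_h$ and $\Pi^0_h$ on whole parent elements, the trace inequalities of Lemma~\ref{tr_intf} and Lemma~\ref{tr_cut}, the stability of the extensions $E^2_i,E^1_i$, and the harmonic-weight identities $\{\mu\}_w=2\mu_iw_i$ and $hw_i^2/\{\mu\}_w\le h/(2\mu_i)$; the only organizational difference is that the paper dispatches the entire velocity part by citing Lemma~4.3 of its earlier work and then treats the pressure terms one by one, whereas you carry out both halves explicitly. One technical caveat: Lemma~\ref{tr_cut} is stated for $v\in H^2(K)$ (its right-hand side contains $h_K^2|\nabla v|^2_{1,K}$), so you cannot literally apply it to $q_i-\Pi^0_hq_i$ or to $\nabla(\mathbf v_i-\Pi^1_h\mathbf v_i)\cdot\mathbf n_{\widetilde e}$, which are only $H^1(K)$. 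This does not break the argument, because every such occurrence sits in a term weighted by $|\widetilde e|$ rather than $|\widetilde e|^{-1}$, and there the crude bound $|\widetilde e|\,\norm{w}^2_{0,\widetilde e}\le |e|\,\norm{w}^2_{0,e}$ followed by the ordinary trace inequality on the full edge $e$ (valid for $H^1(K)$ functions) suffices --- this is exactly what the paper does for those terms. The genuinely delicate term, $|\widetilde e|^{-1}\mu_i\norm{[\mathbf v-I_h\mathbf v]}^2_{0,\widetilde e}$, is the only one that truly needs Lemma~\ref{tr_cut}, and there you apply it to an $H^2(K)$ function, which is correct.
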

\begin{proof}
Denote by ${\mathbf{w}}_i = \mathbf{v}_i-I_h\mathbf{v}_i$, $\zeta_i =q_i-R_hq_i, i=1,2$ and $\mathbf{w}=\mathbf{v}-I_h\mathbf{v}$, $\zeta=q-R_hq$. Clearly, $\mathbf{w}|_{\Omega_i}={{\mathbf{w}}_i}|_{\Omega_i}$, $\zeta|_{\Omega_i}={\zeta_i}|_{\Omega_i}, i=1,2$.
By Lemma 4.3 of \cite{hxx2020}, we know that
\begin{equation}
\norme{\mathbf{w}}^2_V\lesssim \sum_{i=1}^2\mu_i h^2|\mathbf{v}|^2_{2,\Omega_i}.
\end{equation}
From the standard finite element interpolation theory in \cite{Ern2004Theory}, for $l=0,1$
\begin{equation}\label{inter_pressure}
\norm{\zeta_i}_{l,K}\lesssim h^{m-l}|q_i|_{m,K}, l\leq m\leq 1.
\end{equation}
Further, collecting the property of extension operator, we have
{
\begin{equation}
\sum_{K\in {\T}_{h,i}}\norm{\zeta_i}^2_{l,K}\lesssim h^{2-2l}|q_i|^2_{1,\Omega^{+}_{h,i}}\lesssim h^{2-2l}|q|^2_{1,\Omega_i}, l=0,1.
\end{equation}
}
Next we estimate each term with $\zeta$ of $\norme{(\cdot,\zeta)}_V$. Clearly
\begin{equation*}
\begin{aligned}
&\sum_{K\in {\T}_{h,i}}\norm{\mu_i^{-1/2}\zeta}^2_{0,K\cap\Omega_i}\lesssim \mu_i^{-1}h^2|q|^2_{1,\Omega_i}.
\end{aligned}
\end{equation*}
From $\frac{w^2_i}{\{\mu\}_w}\leq \frac{1}{2\mu_i},i=1,2$ and Lemma~\ref{tr_intf}, we infer that
\begin{equation*}
\begin{aligned}
&\frac{h}{\{\mu\}_w}\sum_{K\in G^{\Gamma}_h}\norm{\{\zeta\}_w}^2_{0,\Gamma_K}\lesssim\sum^2_{i=1}\sum_{K\in G^{\Gamma}_h}\frac{ h}{\mu_i}\norm{\zeta_i}^2_{0,\Gamma_K}\\
&\lesssim \sum^2_{i=1}\sum_{K\in G^{\Gamma}_h}\frac{ h}{\mu_i}\left(h^{-1}_K\norm{\zeta_i}^2_{0,K}+\norm{\zeta_i}_{0,K}\norm{\nabla\zeta_i}_{0,K}\right)\lesssim \sum_{i=1}^2\mu_i^{-1} h^2|q|^2_{1,\Omega_i}.
\end{aligned}
\end{equation*}
For any {$\widetilde{e}\in {\F}^{cut}_{h,i}$}, we assume that $e=\partial K_l\cap \partial K_r, K_l,K_r\in {\T}_{h,i}$ {such that} $\widetilde{e}\subseteq e$. Applying the triangle and standard trace inequalities, we have
\begin{equation*}
\begin{aligned}
&\mu^{-1}_i|\widetilde{e}|\ \norm{[\zeta_i]}^2_{\widetilde{e}}\leq \mu^{-1}_i|e|\ \norm{[\zeta_i]}^2_{e}\\
&\lesssim \mu^{-1}_i|e|\sum_{j=l,r}\left(|e|^{-1}\norm{\zeta_i}^2_{0,K_j}+|e|\ |\zeta_i|^2_{1,K_j} \right)\lesssim \mu^{-1}_i h^2 \sum_{j=l,r}|q_i|^2_{1,K_j}.
\end{aligned}
\end{equation*}
Similarly, using {triangle and trace inequalities}, we obtain
$$\sum_{e\in {\F}^{\Gamma}_{h,i}}\mu^{-1}_i |e|\ \norm{[\zeta_i]}^2_{0,e}\lesssim \mu^{-1}_i h^2|q|^2_{1,\Omega_i},$$
and
$$\sum_{\widetilde{e}\in {\F}_{h,i}^{cut}}\mu^{-1}_i|\widetilde{e}|\ \norm{\{\zeta\}_k}^2_{0,\widetilde{e}}\lesssim \mu_i^{-1}h^2|q|^2_{1,\Omega_i}.$$
{The theorem follows by combining above estimates and the definition of $\norme{(\cdot,\cdot)}_V$.}
\end{proof}

\begin{theorem}\label{energy_err}
Let $(\mathbf{u},p)$ be the weak solution of \eqref{eP} and $(\mathbf{u}_h,p_h)$ be the solution of the finite element formulation \eqref{numer_sol} respectively. Suppose that the solution $(\mathbf{u},p)\in [H^2(\Omega_1\cup\Omega_2)\cap H^1_0(\Omega)]^2\times H^1(\Omega_1\cup\Omega_2)\cap L^2_{\mu}(\Omega)$ and $h$ is sufficiently small. If $\gamma_i, i=0,1,2$ are large enough, then the following error estimate holds
$$\norme{(\mathbf{u}-\mathbf{u}_h,p-p_h)}\lesssim h\left(\norm{\mu^{1/2}\mathbf{u}}_{2,\Omega_1\cup \Omega_2}+\norm{\mu^{-1/2}p}_{1,\Omega_1\cup \Omega_2}\right).$$
\end{theorem}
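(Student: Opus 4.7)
My plan is to follow the classical Strang-type decomposition that is standard for nonconforming mixed methods with Nitsche coupling. Set $\mathbf{e}_h := I_h\mathbf{u} - \mathbf{u}_h \in V_h$ and $\epsilon_h := R_h p - p_h \in Q_h$, and split the error as $(\mathbf{u}-\mathbf{u}_h,\, p-p_h) = (\mathbf{u}-I_h\mathbf{u},\, p-R_hp) - (\mathbf{e}_h,\epsilon_h)$. By the triangle inequality in $\norme{\cdot}$, the interpolation part is already controlled by Theorem~\ref{appr_err}, since $\norme{\cdot} \leq \norme{\cdot}_V$, so the heart of the matter is to estimate the discrete piece $\norme{(\mathbf{e}_h,\epsilon_h)}$.

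To bound the discrete error I would invoke the inf-sup stability just established: there exists $(\mathbf{v}_h,q_h)\in V_h\times Q_h$ with $\norme{(\mathbf{v}_h,q_h)}\lesssim 1$ and
\[
C_s\,\norme{(\mathbf{e}_h,\epsilon_h)} \;\leq\; B_h[(\mathbf{e}_h,\epsilon_h),(\mathbf{v}_h,q_h)].
\]
Applying the quasi-orthogonality identity~\eqref{orth} to the right-hand side, I would rewrite
\[
B_h[(\mathbf{e}_h,\epsilon_h),(\mathbf{v}_h,q_h)] \;=\; -B_h[(\mathbf{u}-I_h\mathbf{u},\,p-R_hp),(\mathbf{v}_h,q_h)] \;+\; E_{nc}(\mathbf{u},p;\mathbf{v}_h),
\]
where $E_{nc}$ collects the nonconformity residuals on uncut edges appearing on the right-hand side of~\eqref{orth}. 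This leaves two pieces to control.

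The first piece is handled by the discrete continuity of $B_h$ (Lemma~\ref{con_bilinear}) together with the interpolation bound of Theorem~\ref{appr_err}, producing the target factor $h\bigl(\mu_i^{1/2}|\mathbf{u}|_{2,\Om_1\cup\Om_2} + \mu_i^{-1/2}|p|_{1,\Om_1\cup\Om_2}\bigr)\norme{(\mathbf{v}_h,q_h)}$. For the second piece I would exploit the Crouzeix--Raviart property $\int_e [\mathbf{v}_h]\,ds = 0$ for every $e\in\F_{h,i}^{nc}$: subtract the edge-mean of $\mu_i\nabla\mathbf{u}\cdot\mathbf{n}_e$ (respectively of $p$) without changing the integral, apply Cauchy--Schwarz, and invoke the local Bramble--Hilbert trace estimate $\|g - \bar g_e\|_{0,e}\lesssim h^{1/2}|g|_{1,K}$ together with the standard jump bound $\|[\mathbf{v}_h]\|_{0,e}\lesssim h^{1/2}\|\nabla \mathbf{v}_h\|_{0,K_l\cup K_r}$. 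Summing over edges and pairing the factors $\mu_i^{1/2}$ and $\mu_i^{-1/2}$ with the corresponding weights already present in $\norme{\mathbf{v}_h}$ produces the same $O(h)$ bound with the correct viscosity scaling.

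The main obstacle, in my view, is ensuring that the constants are genuinely robust in $\mu$ and insensitive to how the interface intersects the mesh. The delicate case is when an edge in $\F_{h,i}^{nc}$ is adjacent to a cut element $K\in G_h^{\Gamma}$: one must avoid factors depending on $|K\cap\Om_i|/|K|$. This is exactly where the stabilization terms $J_{\mathbf{u}}$ and $J_p$, built into $\norme{\cdot}$, become indispensable -- they yield control on the whole extended subdomains $\Om_{h,i}^+$ in the spirit of inequality~\eqref{bh_3} in Lemma~\ref{continu_bh}, so that norms of $\mathbf{v}_h$ and $q_h$ on cut fragments can be lifted to full neighbouring elements at no cost. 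Once these pieces are assembled, dividing through by $\norme{(\mathbf{v}_h,q_h)}$, and combining the discrete bound with the interpolation bound via the triangle inequality, yields the claimed estimate.
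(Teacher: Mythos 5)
Your proposal is correct and follows essentially the same route as the paper: triangle inequality splitting off the interpolation error (Theorem~\ref{appr_err}), inf-sup stability to reduce the discrete part to $B_h$, the quasi-orthogonality identity~\eqref{orth} to isolate the nonconformity residual on uncut edges, and the Crouzeix--Raviart mean-zero property plus edge-mean subtraction and trace estimates to bound that residual at order $h$ with the correct viscosity weights. Your observation that the $J_{\mathbf{u}}$ stabilization is what absorbs the trace of $\nabla\mathbf{v}_h$ on edges adjacent to cut elements is precisely how the paper handles its ``Case 2''.
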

\begin{proof}
Adding and subtracting the interpolations $I_h\mathbf{u}$ and $R_hp$ to $\norme{\cdot}$ and using the triangle inequality, we get
\begin{equation}\label{errorterm_1}
\norme{(\mathbf{u}-\mathbf{u}_h,p-p_h)}\leq {\norme{(\mathbf{u}-I_h\mathbf{u},p-R_hp)}_V}+\norme{(I_h\mathbf{u}-\mathbf{u}_h,R_hp-p_h)}.
\end{equation}
For the second term of the right hand side of \eqref{errorterm_1}, using the inf-sup condition and Lemma~\ref{con_bilinear}, we get
\begin{equation}\label{errorterm_2}
\begin{aligned}
&\norme{(I_h\mathbf{u}-\mathbf{u}_h,R_hp-p_h)}\lesssim\sup_{0\neq(\mathbf{v}_h,q_h)\in V_h\times Q_h}\frac{B_h[(I_h\mathbf{u}-\mathbf{u}_h,R_hp-p_h),(\mathbf{v}_h,q_h)]}{\norme{(\mathbf{v}_h,q_h)}}\\
&= \sup_{0\neq(\mathbf{v}_h,q_h)\in V_h\times Q_h}\frac{B_h[(\mathbf{u}-\mathbf{u}_h,p-p_h),(\mathbf{v}_h,q_h)]+B_h[(I_h\mathbf{u}-\mathbf{u},R_hp-p),(\mathbf{v}_h,q_h)]}{\norme{(\mathbf{v}_h,q_h)}}\\
&\lesssim \sup_{0\neq(\mathbf{v}_h,q_h)\in V_h\times Q_h}\frac{B_h[(\mathbf{u}-\mathbf{u}_h,p-p_h),(\mathbf{v}_h,q_h)]}{\norme{(\mathbf{v}_h,q_h)}}+\norme{(\mathbf{u}-I_h\mathbf{u},p-R_hp)}_V.
\end{aligned}
\end{equation}
From~\eqref{orth}, it follows that
\begin{equation}\label{inter_orth}
\begin{aligned}
&B_h[(\mathbf{u}-\mathbf{u}_h,p-p_h),(\mathbf{v}_h,q_h)]=\sum^2_{i=1}\sum_{e\in \F_{h,i}^{nc}}\left(\int_e\mu_i \nabla \mathbf{u}\cdot \mathbf{n}_e[\mathbf{v}_h]-\int_e p[\mathbf{v}_h\cdot \mathbf{n}_e]\right).
\end{aligned}
\end{equation}
Let $\overline{v}=\frac{1}{|e|}\int_e v$. Applying the error estimate for polynomial projection and the standard error estimate on interpolation of Sobolev spaces (see \cite{MR851383}), the following inequality holds
\begin{equation}
\norm{v-\bar{v}}_{0,e}\lesssim h^{1/2}\norm{v}_{1/2,e}.
\end{equation}
Further, from the Poincar$\acute{e}$ inequality, we have
$$\norm{v-\overline{v}}_{0,e}\lesssim |e|\ \norm{\nabla v}_{0,e}.$$
Since {$e\in {\F}^{nc}_{h,i}$} is the non-cut edge, there are three cases.

Case 1: $e=\partial K_l \cap\partial K_r$, $K_l,K_r\in {\T}_{h,i}$ are totally contained in $\Omega_i$. We have
$$\int_e\mu_i \nabla \mathbf{u}\cdot \mathbf{n}_e[\mathbf{v}_h]=\int_e\mu_i \left(\nabla\mathbf{u}\cdot\mathbf{n}_e-\overline{\nabla \mathbf{u}\cdot \mathbf{n}_e}\right)[\mathbf{v}_h-\overline{\mathbf{v_h}}].$$
Furthermore, using Cauchy-Schwarz, interpolation and trace inequalities, we get
\begin{equation}\label{case1}
\begin{aligned}
\int_e\mu_i \nabla \mathbf{u}\cdot \mathbf{n}_e[\mathbf{v}_h]&\leq \mu_i\norm{\nabla\mathbf{u}\cdot\mathbf{n}_e-\overline{\nabla \mathbf{u}\cdot \mathbf{n}_e}}_{0,e}\sum_{j=l,r}|e|\ \norm{{\nabla \mathbf{v_h}}|_{K_j}}_{0,e}\\
&\lesssim \mu_ih^{1/2}\norm{\nabla \mathbf{u}\cdot \mathbf{n}_e}_{\frac{1}{2},e}\left(\sum_{j=l,r}|e|\ \norm{{\nabla \mathbf{v_h}}|_{K_j}}_{0,e}\right)\\
&\lesssim \sqrt{\mu_i}h\norm{\nabla\mathbf{u}}_{1,K_l\cup K_r}\left(\sum_{j=l,r}|e|^{1/2}\ \norm{\sqrt{\mu_i}\nabla \mathbf{v_h}|_{K_j}}_{0,e}\right)\\
&\leq \mu_i^{1/2}h\norm{\mathbf{u}}_{2,K_l\cup K_r}\norm{\sqrt{\mu_i}\nabla \mathbf{v_h}}_{0,K_l\cup K_r},
\end{aligned}
\end{equation}
where we have used the fact that triangulations is conforming, quasi-uniform and regular for the last inequality.
Similarly, for $\int_e p[\mathbf{v}_h\cdot \mathbf{n}_e]$ we have
\begin{equation}
\begin{aligned}
\int_e p[\mathbf{v}_h\cdot \mathbf{n}_e]&=\int_e(p-\overline{p})[\mathbf{v}_h\cdot \mathbf{n}_e-\overline{\mathbf{v_h}\cdot \mathbf{n}_e}]\\
&\lesssim\mu_i^{-1/2}h\norm{p}_{1,K_l\cup K_r}\norm{\sqrt{\mu_i}\nabla \mathbf{v_h}}_{0,K_l\cup K_r}.
\end{aligned}
\end{equation}
Case 2: $e=\partial K_l\cap \partial K_r$, $K_r, K_l\in \T_{h,i}$ where only one of the two elements is the cut element. Without loss of the generality, we assume that $K_l$ is totally contained in $\Omega_i$ and $K_r\in G^{\Gamma}_h$. Similar to \eqref{case1}, we have the following estimates
\begin{equation}
\begin{aligned}
\int_e\mu_i \nabla \mathbf{u}\cdot \mathbf{n}_e[\mathbf{v}_h]
&\lesssim \mu_ih\norm{\nabla\mathbf{u}}_{1,K_l}\left(\sum_{j=l,r}|e|^{1/2}\norm{{\nabla \mathbf{v_h}}|_{K_j}}_{0,e}\right)\\
&\leq \mu_ih\norm{\mathbf{u}}_{2,K_l}|e|^{1/2}\left(\norm{{\nabla \mathbf{v_h}}|_{K_l}}_{0,e}+\norm{{\nabla \mathbf{v_h}}|_{K_l}\pm[\nabla\mathbf{v_h}]}_{0,e}\right)\\
&\lesssim \sqrt{\mu_i}h\norm{\mathbf{u}}_{2,K_l}\left(\norm{\sqrt{\mu_i}{\nabla \mathbf{v_h}}}_{0,K_l}+|e|^{1/2}\sqrt{\mu_i}\norm{[\nabla\mathbf{v_h}]}_{0,e}\right).
\end{aligned}
\end{equation}
Likewise, for $\int_e p[\mathbf{v}_h\cdot \mathbf{n}_e]$ we have
\begin{equation}
\begin{aligned}
\int_e p[\mathbf{v}_h\cdot \mathbf{n}_e]
&\lesssim\mu_i^{-1/2}h\norm{p}_{1,K_l}\left(\norm{\sqrt{\mu_i}{\nabla \mathbf{v_h}}}_{0,K_l}+|e|^{1/2}\sqrt{\mu_i}\norm{[\nabla\mathbf{v_h}]}_{0,e}\right).
\end{aligned}
\end{equation}
Case 3: {$e\in \F^{nc}_{h,i}$} and $e\in \partial K\cap\partial\Omega$ for $K\in \T_{h,i}$. Similarly, we have
\begin{equation}
\begin{aligned}
\int_e\mu_i \nabla \mathbf{u}\cdot \mathbf{n}_e[\mathbf{v}_h]
&\lesssim \mu_i^{1/2}h\norm{\mathbf{u}}_{2,K}\norm{\sqrt{\mu_i}\nabla\mathbf{v_h}}_{0,K},
\end{aligned}
\end{equation}
and
\begin{equation}
\begin{aligned}
\int_e p[\mathbf{v}_h\cdot \mathbf{n}_e]
&\lesssim\mu_i^{-1/2}h\norm{p}_{1,K}\norm{\sqrt{\mu_i}\nabla\mathbf{v_h}}_{0,K},
\end{aligned}
\end{equation}
where we have used $\int_e \mathbf{v}_h=0$ for $\mathbf{v}_h\in V_h$.

Hence, \eqref{inter_orth} is estimated by
\begin{equation}\label{errorterm_3}
B_h[(\mathbf{u}-\mathbf{u}_h,p-p_h),(\mathbf{v}_h,q_h)]\lesssim h\left(\norm{\mu^{1/2}\mathbf{u}}_{2,\Omega_1\cup\Omega_2}+\norm{\mu^{-1/2}p}_{1,\Omega_1\cup\Omega_2}\right)\norme{\mathbf{v_h}}.
\end{equation}
Finally, the result follows by combining \eqref{errorterm_1}, \eqref{errorterm_2}, \eqref{errorterm_3} and Theorem~\ref{appr_err}.
\end{proof}

Using the Aubin-Nitsche duality argument, the following $L^2$-estimate for the velocity can be proven assuming additional regularity. Consider the dual adjoint problem. Let $\mathbf{z}$ and $r$ be the solution of the problem
\begin{equation}\label{arg_ep}\left\{
\begin{aligned}
            & - \na\cdot\big(\mu \na \mathbf{z}\big)-\nabla r  =  \mathbf{u}-\mathbf{u}_h,\qquad   &\text{ in }\Om_1\cup\Om_2,\\
            & \nabla \cdot \mathbf{z} =0,\qquad  &\text{ in }\Om_1\cup\Om_2,\\
            & [\mathbf{z}]=0,\  [r\mathbf{n}+\mu \nabla \mathbf{z}\cdot \mathbf{n}]=0, \qquad &\text{ on } \Ga, \\
            &  \mathbf{z} =  0,\qquad &\text{ on } \pa\Om.
\end{aligned}\right.
\end{equation}
We assume that the solution of the adjoint problem satisfies the following regularity
$$\mu_i\norm{\mathbf{z}}_{2,\Omega_i}+\norm{r}_{1,\Omega_i}\lesssim \norm{\mathbf{u}-\mathbf{u}_h}_{0,\Omega}{ ,i=1,2}.$$
\begin{theorem}\label{l2}
Under the same assumptions of Theorem~\ref{energy_err}, there holds
$$\norm{\mathbf{u}-\mathbf{u}_h}_{0,\Omega}\lesssim \mu^{-1/2}_{min}h^2\left(\norm{\mu^{1/2}\mathbf{u}}_{2,\Omega_1\cup\Omega_2}+\norm{\mu^{-1/2}p}_{1,\Omega_1\cup\Omega_2}\right),$$
where $\mu_{min}=\min_{i=1,2}\{\mu_i\}$.
\end{theorem}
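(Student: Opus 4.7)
The plan is to invoke the Aubin--Nitsche duality argument with the dual problem \eqref{arg_ep}. Write $e_{\mathbf{u}}=\mathbf{u}-\mathbf{u}_h$ and $e_p=p-p_h$. Testing the dual equation against $e_{\mathbf{u}}$ and adding $(e_p,\nabla\cdot\mathbf{z})_\Omega=0$ (permitted since $\nabla\cdot\mathbf{z}=0$) gives
\begin{equation*}
\|e_{\mathbf{u}}\|_{0,\Omega}^{2}=\bigl(-\nabla\cdot(\mu\nabla\mathbf{z})-\nabla r,\,e_{\mathbf{u}}\bigr)_{\Omega}+\bigl(e_p,\nabla\cdot\mathbf{z}\bigr)_{\Omega}.
\end{equation*}
I would then integrate by parts element by element, invoking the dual interface condition $[\mu\nabla\mathbf{z}\cdot\mathbf{n}+r\mathbf{n}]=0$ on $\Gamma$, the boundary condition $\mathbf{z}|_{\partial\Omega}=0$, and the $H^2$-regularity of $\mathbf{z}$ within each $\Omega_i$, so that the jump terms on the interface and on cut edges assemble into $B_h$. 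This yields an identity of the form
\begin{equation*}
\|e_{\mathbf{u}}\|_{0,\Omega}^{2}=B_h\bigl[(e_{\mathbf{u}},e_p),(\mathbf{z},-r)\bigr]+R_{\mathrm{nc}}(\mathbf{z},r;e_{\mathbf{u}}),
\end{equation*}
where $R_{\mathrm{nc}}$ collects the edge integrals $\sum_{i}\sum_{e\in\F_{h,i}^{nc}}\int_e(\mu_i\nabla\mathbf{z}\cdot\mathbf{n}_e\cdot[e_{\mathbf{u}}]-r[e_{\mathbf{u}}\cdot\mathbf{n}_e])$ produced by the non-conformity of $\mathbf{u}_h$ on the uncut edges (exactly analogous to the residual in \eqref{orth}).

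Next I would insert the interpolants and split
\begin{equation*}
B_h\bigl[(e_{\mathbf{u}},e_p),(\mathbf{z},-r)\bigr]=B_h\bigl[(e_{\mathbf{u}},e_p),(\mathbf{z}-I_h\mathbf{z},-(r-R_h r))\bigr]+B_h\bigl[(e_{\mathbf{u}},e_p),(I_h\mathbf{z},-R_h r)\bigr].
\end{equation*}
For the first piece I would apply the continuity bound of Lemma~\ref{con_bilinear}, then use Theorem~\ref{energy_err} for $\norme{(e_{\mathbf{u}},e_p)}_V$ and Theorem~\ref{appr_err} for $\norme{(\mathbf{z}-I_h\mathbf{z},r-R_h r)}_V$. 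This produces the factor $h^2\bigl(\|\mu^{1/2}\mathbf{u}\|_{2,\Omega_1\cup\Omega_2}+\|\mu^{-1/2}p\|_{1,\Omega_1\cup\Omega_2}\bigr)\bigl(\mu_i^{1/2}|\mathbf{z}|_{2,\Omega_1\cup\Omega_2}+\mu_i^{-1/2}|r|_{1,\Omega_1\cup\Omega_2}\bigr)$; the dual regularity hypothesis $\mu_i\|\mathbf{z}\|_{2,\Omega_i}+\|r\|_{1,\Omega_i}\lesssim\|e_{\mathbf{u}}\|_{0,\Omega}$ then turns the dual factor into $\mu_{\min}^{-1/2}\|e_{\mathbf{u}}\|_{0,\Omega}$, which after division gives exactly the claimed bound.

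The second piece is the primal non-conformity residual: using \eqref{orth} it equals $\sum_i\sum_{e\in\F_{h,i}^{nc}}\bigl(\int_e\mu_i\nabla\mathbf{u}\cdot\mathbf{n}_e[I_h\mathbf{z}]-\int_e p[I_h\mathbf{z}\cdot\mathbf{n}_e]\bigr)$. Because $I_h\mathbf{z}$ is Crouzeix--Raviart, $\int_e[I_h\mathbf{z}]=0$ on each $e\in\F_{h,i}^{nc}$, so I may subtract edgewise means of $\mu_i\nabla\mathbf{u}\cdot\mathbf{n}_e$ and $p$ and then estimate each edge term exactly as in the three-case analysis of Theorem~\ref{energy_err}, except that now $[I_h\mathbf{z}]$ carries an extra factor of $h$ (after subtracting its own mean and using Bramble--Hilbert together with the $H^2$ regularity of $\mathbf{z}$ on $\Omega_i$), producing the full $h^2$ rate. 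The symmetric dual residual $R_{\mathrm{nc}}$ is bounded by the same mean-subtraction trick on $[e_{\mathbf{u}}]$ (zero-mean on uncut edges) combined with the energy-error bound Theorem~\ref{energy_err} for $e_{\mathbf{u}}$ on each of $K_l,K_r$. Combining all pieces and dividing by $\|e_{\mathbf{u}}\|_{0,\Omega}$ yields the desired estimate.

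The main obstacle will be organizing the consistency terms from \emph{both} sides of the duality: the non-conformity of $\mathbf{u}_h$ produces edge residuals via \eqref{orth}, and the elementwise IBP on the dual side produces analogous residuals involving $\mu\nabla\mathbf{z}$ and $r$. Keeping track of weights (harmonic on $\Gamma$, arithmetic on cut edges) to preserve $\mu$-robustness, and exploiting the vanishing mean of CR jumps on each non-cut edge to recover the extra $h$ factor from Bramble--Hilbert, is the delicate part; once this is done, the scaling $\mu_{\min}^{-1/2}$ in the final estimate is dictated precisely by the dual regularity assumption.
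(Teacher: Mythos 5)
Your proposal is correct and arrives at the stated bound, but it organizes the Aubin--Nitsche argument differently from the paper. You work entirely with the continuous dual pair $(\mathbf{z},r)$: after the elementwise integration by parts you split $B_h[(\mathbf{u}-\mathbf{u}_h,p-p_h),(\mathbf{z},-r)]$ by inserting $(I_h\mathbf{z},R_hr)$, and you then control two consistency residuals, the primal one tested with $[I_h\mathbf{z}]$ (via \eqref{orth}) and the dual one $R_{\mathrm{nc}}$ tested with $[\mathbf{u}-\mathbf{u}_h]$. The paper instead introduces the discrete dual solution $(\mathbf{z}_h,r_h)$ of \eqref{l2_1} and splits the error into $\I_1+\I_2+\I_3$, which forces it to prove the auxiliary energy estimates \eqref{pf_1} and \eqref{zh_rh} for the discrete dual problem; your route avoids that entirely, needing only the interpolation estimate of Theorem~\ref{appr_err} for $(\mathbf{z},r)$ plus the dual regularity. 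The price is that your dual consistency residual is paired with the full nonconforming error $[\mathbf{u}-\mathbf{u}_h]$ rather than with an interpolation error $[I_h\mathbf{u}-\mathbf{u}]$ as in the paper's $\I_2$, so there you must combine the zero mean of the Crouzeix--Raviart jumps with the energy bound of Theorem~\ref{energy_err} to recover the second power of $h$ --- which you correctly do. Two small points to make explicit in a write-up: (i) the continuity bound for your first piece requires $\norme{(\mathbf{u}-\mathbf{u}_h,p-p_h)}_V$ rather than $\norme{(\mathbf{u}-\mathbf{u}_h,p-p_h)}$, which follows by a triangle inequality through the interpolant using Theorem~\ref{appr_err} together with the last inequality of Lemma~\ref{con_bilinear} (the paper uses the same fact tacitly in \eqref{l2_term1} and \eqref{I3_term42}); (ii) the reduction of $b_h(p-p_h,\mathbf{z})$ uses both $\nabla\cdot\mathbf{z}=0$ and the continuity of $\mathbf{z}\cdot\mathbf{n}$ across $\Gamma$ and the cut edges. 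Both routes deliver the same $\mu_{min}^{-1/2}h^{2}$ rate with constants independent of the viscosity contrast.
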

{
\begin{remark}
Suppose that $\delta\kappa \mathbf{n}=\mathbf{0}$ and there holds the following regularity estimate
$$\mu_i\norm{\mathbf{u}}_{2,\Omega_i}+\norm{p}_{1,\Omega_i}\lesssim \norm{\mathbf{f}}_{0,\Omega}, i=1,2.$$
Thus, from Theorem~\ref{l2}, we have the following error bound for $L^2$-norm of velocity:
$$\norm{\mathbf{u}-\mathbf{u}_h}_{0,\Omega}\lesssim \mu^{-1}_{min}h^2\norm{\mathbf{f}}_{0,\Omega},$$
which does not dependent on the contrast of the viscosity coefficient.
\end{remark}
}
\begin{proof}
Multiply the equation \eqref{arg_ep} by $\mathbf{u}$, integrating on each sub-domain and using integration by parts, we have
\begin{equation}
\begin{aligned}
(\mathbf{u}-\mathbf{u}_h,\mathbf{u})&=\sum^2_{i=1}\int_{\Omega_i}\left(\mu_i\nabla \mathbf{z}\cdot \nabla \mathbf{u}+ r\nabla\cdot \mathbf{u}\right)-\int_{\Gamma}\left[\mu\nabla \mathbf{z}\cdot \mathbf{n} \mathbf{u}+r \mathbf{n}\cdot \mathbf{u}\right]\\
&=A_h(\mathbf{z},\mathbf{u})-b_h(r,\mathbf{u}).
\end{aligned}
\end{equation}
Further, let $(\mathbf{z}_h,r_h)\in V_h\times Q_h$ be the solution of the finite element method approximation of $(\mathbf{z},r)$ which satisfies
\begin{equation}\label{l2_1}
A_h(\mathbf{z}_h,\mathbf{v}_h)-b_h(r_h,\mathbf{v}_h)=(\mathbf{u}-\mathbf{u}_h,\mathbf{v}_h), \ \forall \mathbf{v}_h\in V_h.
\end{equation}
It is easy to obtain
\begin{equation}\label{l2_2}
\begin{aligned}
&A_h(\mathbf{z},\mathbf{v}_h)-b_h(r,\mathbf{v}_h)=(\mathbf{u}-\mathbf{u}_h,\mathbf{v}_h)\\
&\qquad+\sum^2_{i=1}\sum_{e\in \F_{h,i}^{nc}}\left(\int_e\mu_i \nabla \mathbf{z}\cdot \mathbf{n}_e[\mathbf{v}_h]+\int_e r[\mathbf{v}_h\cdot \mathbf{n}_e]\right), \ \forall \mathbf{v}_h\in V_h.
\end{aligned}
\end{equation}
Hence,
\begin{equation}
\begin{aligned}
\norm{\mathbf{u}-\mathbf{u}_h}^2_{0,\Omega}&=(\mathbf{u}-\mathbf{u}_h,\mathbf{u})-(\mathbf{u}-\mathbf{u}_h,\mathbf{u}_h)\\
&=A_h(\mathbf{z},\mathbf{u})-b_h(r,\mathbf{u})-A_h(\mathbf{z}_h,\mathbf{u}_h)+b_h(r_h,\mathbf{u}_h)\\
&=A_h(\mathbf{z}-\mathbf{z}_h,\mathbf{u})+A_h(\mathbf{z}_h,\mathbf{u}-\mathbf{u}_h)-b_h(r,\mathbf{u})+b_h(r_h,\mathbf{u}_h)\\
&=A_h(\mathbf{z}-\mathbf{z}_h,\mathbf{u}-I_h\mathbf{u})-b_h(r-r_h,\mathbf{u}-I_h\mathbf{u})\\
&\quad+A_h(\mathbf{z}-\mathbf{z}_h,I_h\mathbf{u})-b_h(r-r_h,I_h\mathbf{u})\\
&\quad+A_h(\mathbf{u}-\mathbf{u}_h,\mathbf{z}_h)-b_h(r_h,\mathbf{u}-\mathbf{u}_h):=\I_1+\I_2+\I_3,
\end{aligned}
\end{equation}
{where we have used the fact that $A_h$ is symmetric, and $\I_1$, $\I_2$ and $\I_3$ stand for the first two terms, the third and fourth terms, and the last two terms, respectively.}

Using the continuities of $A_h(\cdot,\cdot)$ and $b_h(\cdot,\cdot)$, we get
\begin{equation}
\begin{aligned}
\I_1&\lesssim \norme{\left(\mathbf{z}-\mathbf{z}_h,r-r_h\right)}_V\norme{\mathbf{u}-I_h\mathbf{u}}_V.\\
\end{aligned}
\end{equation}
{Similar to} Theorem~\ref{energy_err}, {we have}
\begin{equation}\label{pf_1}
\norme{(\mathbf{z}-\mathbf{z}_h,r-r_h)}_V\lesssim h\left(\norm{\mu^{1/2}\mathbf{z}}_{2,\Omega_1\cup\Omega_2}+\norm{\mu^{-1/2}r}_{1,\Omega_1\cup\Omega_2}\right).
\end{equation}
Then, {from the regularity and Theorem~\ref{appr_err}}, $\I_1$ can be estimated by
\begin{equation}
\begin{aligned}
\I_1&\lesssim h\left(\norm{\mu^{1/2}\mathbf{z}}_{2,\Omega_1\cup\Omega_2}+\norm{\mu^{-1/2}r}_{1,\Omega_1\cup\Omega_2}\right)\norme{\mathbf{u}-I_h\mathbf{u}}_V\\
&\lesssim \mu^{-1/2}_{min}h^2|\mu^{1/2}\mathbf{u}|_{2,\Omega_1\cup\Omega_2}\norm{\mathbf{u}-\mathbf{u}_h}_{0,\Omega}.
\end{aligned}
\end{equation}
From \eqref{l2_1} and \eqref{l2_2}, $I_2$ can be rewritten by
\begin{equation}
\begin{aligned}
\I_2 &= A_h(\mathbf{z},I_h\mathbf{u})-b_h(r,I_h\mathbf{u})-A_h(\mathbf{z}_h,I_h\mathbf{u})+b_h(r_h,I_h\mathbf{u})\\
&=\sum^2_{i=1}\sum_{e\in \F_{h,i}^{nc}}\left(\int_e\mu_i \nabla \mathbf{z}\cdot \mathbf{n}_e[I_h\mathbf{u}]+\int_e r[I_h\mathbf{u}\cdot \mathbf{n}_e]\right).
\end{aligned}
\end{equation}
{Using the fact that $\nabla I_h\mathbf{z}$ and $R_hr$ are constants, $\int_e[I_h\mathbf{u}]=0$ and $[\mathbf{u}]=0$ for $e\in \F^{nc}_{h,i}$, further, it follows that}
\begin{equation*}
\begin{aligned}
\I_2&=\sum^2_{i=1}\sum_{e\in \F_{h,i}^{nc}}\left(\int_e\mu_i \{\nabla (\mathbf{z}-I_h\mathbf{z})\cdot \mathbf{n}_e\}_k[I_h\mathbf{u}-\mathbf{u}]+\int_e \{r-R_hr\}_k[(I_h\mathbf{u}-\mathbf{u})\cdot \mathbf{n}_e]\right)\\
&\lesssim h^2\sum^2_{i=1}\left(\mu^{1/2}_i\norm{\mathbf{z}}_{2,\Omega_i}+\mu^{-1/2}_i\norm{r}_{1,\Omega_i}\right)\mu^{1/2}_i\norm{\mathbf{u}}_{2,\Omega_i}\\
&\lesssim \mu^{-1/2}_{min}h^2\norm{\mu^{1/2}\mathbf{u}}_{2,\Omega_1\cup\Omega_2}\norm{\mathbf{u}-\mathbf{u}_h}_{0,\Omega},
\end{aligned}
\end{equation*}
where we have used {trace inequality, the approximation properties of interpolation operators $I_h$, $R_h$ and the regularity.}

{Adding and subtracting $I_h\mathbf{z}$ and $R_hr$ for $A_h$ and $b_h$ respectively, $\I_3$ can be written by}
\begin{equation}\label{I3_term1}
\begin{aligned}
\I_3&=A_h(\mathbf{u}-\mathbf{u}_h,\mathbf{z}_h-I_h\mathbf{z})-b_h(r_h-R_hr,\mathbf{u}-\mathbf{u}_h)\\
&\quad+A_h(\mathbf{u}-\mathbf{u}_h,I_h\mathbf{z})-b_h(R_hr,\mathbf{u}-\mathbf{u}_h)\\
&:=\I_{31}+\I_{32},\\
\end{aligned}
\end{equation}
where $\I_{31}$ and $\I_{32}$ are the first two and last two terms respectively.

First, we estimate the term $\I_{31}$. Using Lemma~\ref{continu_ah} and Lemma~\ref{continu_bh}, we have
\begin{equation}\label{l2_term1}
\begin{aligned}
\I_{31}&\lesssim\norme{\mathbf{u}-\mathbf{u}_h}_V\norme{(\mathbf{z}_h-I_h\mathbf{z},r_h-R_hr)}.\\
&\lesssim\norme{(\mathbf{u}-\mathbf{u}_h,p-p_h)}_V\norme{(\mathbf{z}_h-I_h\mathbf{z},r_h-R_hr)}.
\end{aligned}
\end{equation}
From the proof procedure of Theorem~\ref{energy_err}, {we have}
\begin{equation}
\norme{(\mathbf{u}_h-I_h\mathbf{u},p_h-R_hp)}\lesssim h\left(\norm{\mu^{1/2}\mathbf{u}}_{2,\Omega_1\cup\Omega_2}+\norm{\mu^{-1/2}p}_{1,\Omega_1\cup\Omega_2}\right).
\end{equation}
Then, $\norme{(\mathbf{z}_h-I_h\mathbf{z},r_h-R_hr)}$ have the similar estimate
\begin{equation}\label{zh_rh}
\norme{(\mathbf{z}_h-I_h\mathbf{z},r_h-R_hr)}\lesssim h\left(\norm{\mu^{1/2}\mathbf{z}}_{2,\Omega_1\cup\Omega_2}+\norm{\mu^{-1/2}r}_{1,\Omega_1\cup\Omega_2}\right).
\end{equation}
Together with {Theorem~\ref{energy_err}}, \eqref{zh_rh} and the regularity, $\I_{31}$ can be estimated by
\begin{equation}\label{I3_term3}
\begin{aligned}
\I_{31}&\lesssim \mu^{-1/2}_{min}h^2\left(\norm{\mu^{1/2}\mathbf{u}}_{2,\Omega_1\cup\Omega_2}+\norm{\mu^{-1/2}p}_{1,\Omega_1\cup\Omega_2}\right)\norm{\mathbf{u}-\mathbf{u}_h}_{0,\Omega}.
\end{aligned}
\end{equation}
Now, we bound the remaining term $\I_{32}$. From the definition of $B_h$ and \eqref{orth}, we have
\begin{equation}\label{I3_term2}
\begin{aligned}
&A_h(\mathbf{u}-\mathbf{u}_h,\mathbf{v}_h)+b_h(p-p_h,\mathbf{v}_h)-b_h(q_h,\mathbf{u}-\mathbf{u}_h)+J_p(p-p_h,q_h)\\
&=\sum^2_{i=1}\sum_{e\in \F_{h,i}^{nc}}\left(\int_e\mu_i \nabla \mathbf{u}\cdot \mathbf{n}_e[\mathbf{v}_h]-\int_e p[\mathbf{v}_h\cdot \mathbf{n}_e]\right), \ \forall (\mathbf{v}_h,q_h)\in V_h\times Q_h.
\end{aligned}
\end{equation}
Thus, $\I_{32}$ can be rewritten by
\begin{equation}\label{I3_term4}
\begin{aligned}
\I_{32}&=\sum^2_{i=1}\sum_{e\in \F_{h,i}^{nc}}\left(\int_e\mu_i \nabla \mathbf{u}\cdot \mathbf{n}_e[I_h\mathbf{z}]-\int_e p[I_h\mathbf{z}\cdot \mathbf{n}_e]\right)\\
&\quad-b_h(p-p_h,I_h\mathbf{z})-J_p(p-p_h,R_hr).\\
\end{aligned}
\end{equation}
{Similar to $\I_2$, it follows that}
\begin{equation}\label{I3_term41}
\begin{aligned}
&\sum^2_{i=1}\sum_{e\in \F_{h,i}^{nc}}\left(\int_e\mu_i \nabla \mathbf{u}\cdot \mathbf{n}_e[I_h\mathbf{z}]-\int_e p[I_h\mathbf{z}\cdot \mathbf{n}_e]\right)\\
&\quad\lesssim h^2\sum^2_{i=1}\mu^{1/2}_i\left(\mu^{1/2}_i\norm{\mathbf{u}}_{2,\Omega_i}+\mu^{-1/2}_i|
\norm{p}_{1,\Omega_i}\right)\norm{\mathbf{z}}_{2,\Omega_i}\\
&\quad\lesssim \mu^{-1/2}_{min}h^2\left(\norm{\mu^{1/2}\mathbf{u}}_{2,\Omega_1\cup\Omega_2}+\norm{\mu^{-1/2}p}_{1,\Omega_1\cup\Omega_2}\right)\norm{\mathbf{u}-\mathbf{u}_h}_{0,\Omega}.
\end{aligned}
\end{equation}
Applying the fact that $\nabla\cdot \mathbf{z}=0$ and the continuity of {$\mathbf{z}$ and $r$}, we obtain
$$-b_h(p-p_h,I_h\mathbf{z})-J_p(p-p_h,R_hr)=b_h(p-p_h,\mathbf{z}-I_h\mathbf{z})+J_p(p-p_h,r-R_hr).$$
From Lemma~\ref{continu_bh} and Cauchy-Schwarz inequality, we refer that
\begin{equation}\label{I3_term42}
\begin{aligned}
&b_h(p-p_h,\mathbf{z}-I_h\mathbf{z})+J_p(p-p_h,r-R_hr)\\
&\lesssim\norme{(\mathbf{u}-\mathbf{u}_h,p-p_h)}_V\norme{(\mathbf{z}-I_h\mathbf{z},r-R_hr)}_V\\
&\lesssim h\sum^2_{i=1}\left(\mu^{1/2}_i|\mathbf{z}|_{2,\Omega_i}+\mu^{-1/2}_i|r|_{1,\Omega_i}\right)\norme{(\mathbf{u}-\mathbf{u}_h,p-p_h)}_V\\
&\lesssim h^2\mu_{min}^{-1/2}\left(\norm{\mu^{1/2}\mathbf{u}}_{2,\Omega_1\cup\Omega_2}+\norm{\mu^{-1/2}p}_{1,\Omega_1\cup\Omega_2}\right)\norm{\mathbf{u}-\mathbf{u}_h}_{0,\Omega}.
\end{aligned}
\end{equation}
Combing with \eqref{I3_term41} and \eqref{I3_term42}, we can estimate $\I_{32}$ by
\begin{equation}\label{I3_term5}
\begin{aligned}
\I_{32}&\lesssim h^2\mu_{min}^{-1/2}\left(\norm{\mu^{1/2}\mathbf{u}}_{2,\Omega_1\cup\Omega_2}+\norm{\mu^{-1/2}p}_{1,\Omega_1\cup\Omega_2}\right)\norm{\mathbf{u}-\mathbf{u}_h}_{0,\Omega}.
\end{aligned}
\end{equation}
At last, the result follows by combining the previous estimates.
\end{proof}

\section{Numerical examples}\label{test}
In the above section, we have shown that the proposed finite element method with nonconforming-$P_1/P_0$ pair is of optimal convergence order. In this section we investigate results for numerical experiments in two dimension space for the Stokes interface problem. We present the convergence rate of $H^1$, $L^2$ errors for velocity and $L^2$ error for pressure from two examples. Let $|\cdot|_{1,h,\Omega}$ be the piecewise $H^1$ semi norm. Then we denote the errors as follows:
$$e^0_{h,\mathbf{u}}:=\frac{\norm{\mathbf{u}-\mathbf{u}_h}_{0,\Omega}}{\norm{\mathbf{u}}_{0,\Omega}}, e^0_{h,p}:=\frac{\norm{\mu^{-1/2}(p-p_h)}_{0,\Omega}}{\norm{\mu^{-1/2}p}_{0,\Omega}},e^1_{h,\mathbf{u}}:=\frac{|\mu^{1/2} (\mathbf{u}-\mathbf{u}_h)|_{1,h,\Omega}}{|\mu^{1/2}\mathbf{u}|_{1,h,\Omega}}.$$
\subsection{Example 1: a continuous problem}
We consider a continuous problem presented in \cite{Becker2009A}. The computational domain is $\Omega=[-1,1]\times[-1,1]$, the interface is a circle centered in $(0,0)$ with radius $0.5$ and $\mu=1$. The Dirichlet boundary conditions on $\partial\Omega$ are chosen such that the exact solution satisfies $\mathbf{u}=(20xy^3,5x^4-5y^4)$ and $p=60x^2y-20y^3$.
\begin{table}[htp]
\caption{Errors for a continuous problem with $\mu=1$.}\label{ex1}
\begin{center}
\begin{tabular}{|c|cc|cc|cc|} \hline
$h$  &  $e^1_{h,\mathbf{u}}$&rate &$e^0_{h,\mathbf{u}}$ &rate &$e^0_{h,p}$ &rate\\\hline
   $1/4$     & 0.5040  &       & 0.2726&        &0.5597&\\\hline
   $1/8$     & 0.2816  &0.8389 & 0.0920&1.5671  &0.3237& 0.7900\\\hline
   $1/16$    & 0.1458  &0.9497 & 0.0262&1.8121  &0.1439& 1.1696\\\hline
   $1/32$    & 0.0737  &0.9843 & 0.0066&1.9890  &0.0615& 1.2264\\\hline
   $1/64$    & 0.0372  &0.9864 & 0.0016&2.0444  &0.0300& 1.0356\\\hline
   \end{tabular}
\end{center}
\end{table}

We test our theoretical results with the convergence of errors $e^1_{h,\mathbf{u}}$, $e^0_{h,\mathbf{u}}$ and $e^0_{h,p}$. Five kinds of mesh size are chosen as $h=1/4$,$1/8$,$1/16$,$1/32$,$1/64$. The errors and their convergence orders for the velocity in $L^2$ and $H^1$ norms and the pressure in $L^2$ norm are shown in Table~\ref{ex1}. We can see that the convergence orders of the errors are optimal. Namely, the second order for $e^0_{h,\mathbf{u}}$, and the first order for $e^1_{h,\mathbf{u}}$ and $e^0_{h,p}$. These results support our theoretical results.

\subsection{Example 2: an interface problem}
We now consider a problem where the pressure is continuous and the velocity field is discontinuous on the interface due to different fluid viscosities.
Let $\Omega=[-1,1]\times[-1,1]$, the interface is a circle centered in $(0,0)$ with the radius $0.5$. The interface separates domain $\Omega$ into two regions $\Omega_1=\{(x,y): x^2+y^2>0.25\}$ and $\Omega_2=\{(x,y): x^2+y^2<0.25\}$. The Dirichlet boundary conditions on $\partial\Omega$ are chosen such that the exact solution of the Stokes equation is given by
\[\mathbf{u}=\begin{cases}
(\frac{y(x^2+y^2-0.25)}{\mu_1},\frac{-x(x^2+y^2-0.25}{\mu_1})^{T} &\text{$(x,y) \in \Omega_1$},\\
(\frac{y(x^2+y^2-0.25)}{\mu_2},\frac{-x(x^2+y^2-0.25}{\mu_2})^{T} &\text{$(x,y) \in \Omega_2$},
\end{cases}\]
and
$$ p=4(y^2-x^2),$$
then the right hand side $\mathbf{f}=(-8x-8y,8x+8y)^T$ and the jump conditions $[\mathbf{u}]=0$, $[p\mathbf{n}-\mu\nabla\mathbf{u}\cdot\mathbf{n}]=\mathbf{0}$ on the interface. The viscosity is taken by $\mu_1=1000$ and $\mu_2=1$. Five kinds of mesh size are chosen as $h=1/4,1/8,1/16,1/32,1/64$. The results are shown in Table~\ref{ex2}. It is observed that the convergence orders for $e^1_{h,\mathbf{u}}$, $e^0_{h,\mathbf{u}}$ and $e^0_{h,p}$ are optimal, which demonstrate the theoretical results.
\begin{table}[htp]
\caption{Errors for an interface problem with $\mu_1=1000$ and $\mu_2=1$.}\label{ex2}
\begin{center}
\begin{tabular}{|c|cc|cc|cc|} \hline
$h$  &  $e^1_{h,\mathbf{u}}$&rate &$e^0_{h,\mathbf{u}}$ &rate &$e^0_{h,p}$ &rate \\\hline
   $1/4$     & 0.5115  &       & 0.2754&        &0.5438&\\\hline
   $1/8$     & 0.2850  &0.8438 & 0.0913&1.5928  &0.2976& 0.8697\\\hline
   $1/16$    & 0.1463  &0.9620 & 0.0253&1.8515  &0.1503& 0.9855\\\hline
   $1/32$    & 0.0738  &0.9872 & 0.0063&2.0057  &0.0641& 1.2294\\\hline
   $1/64$    & 0.0373  &0.9844 & 0.0016&1.9773  &0.0302 & 1.0858\\\hline
   \end{tabular}
\end{center}
\end{table}

\begin{table}[htp]
\caption{Errors for an interface problem with $(\mu_1,\mu_2)=(10,1),(10^2,1),\cdots, (10^5,1)$ and fixed mesh $h=1/32$.}\label{ex3}
\begin{center}
\begin{tabular}{|c|c|c|c|c|} \hline
$\mu_1$  &$\mu_2$ &  $e^1_{h,\mathbf{u}}$  &$e^0_{h,\mathbf{u}}$  &$e^0_{h,p}$   \\\hline
   $1E+01$ &$1$   & 0.0738   & 0.0063  &0.0598\\\hline
   $1E+02$ &$1$   & 0.0737   & 0.0066  &0.0612\\\hline
   $1E+03$ &$1$   & 0.0737   & 0.0066  &0.0615\\\hline
   $1E+04$ &$1$   & 0.0737   & 0.0066  &0.0615\\\hline
   $1E+05$ &$1$   & 0.0737   & 0.0066  &0.0615\\\hline
   \end{tabular}
\end{center}
\end{table}

For the above interface problem, the second numerical test is designed to investigate  the influence of the jump of the different viscosities on the errors. To do this, we fix the mesh size $h=1/32$. The errors for velocity and pressure are listed in Table~\ref{ex3} with $(\mu_1,\mu_2)=(10,1),(10^2,1),\cdots, (10^5,1)$. It indicates that the errors converge as $\frac{\mu_{max}}{\mu_{min}}\rightarrow \infty$, which means that they are all independent of the jump of the viscosities.

\section{Conclusions}\label{conclude}
{In this paper,} we have introduced a nonconforming Nitsche's extended finite element method which gives a way to accurately solve the Stokes interface problems with different viscosities. The method allows for discontinuities across the interface, namely, the interface can be intersected by the mesh. Harmonic weighted averages and arithmetic averages are used. Furthermore, the extra stabilization terms {for both velocity and pressure} are added such that the inf-sup condition holds for the nonconforming-$P_1/P_0$ {pair}. It is {shown} that the convergence orders of errors are optimal. Moreover, the errors do not depend on the jump of the viscosities {and the position of the interface with respect to the mesh}. Numerical results for both the continuous problem and the interface problem in two dimensions have been given to support our theoretical results.

\section{Acknowledgements}
The first author was supported by the NUPTSF (Grant XK0070920088). The second author was
partially supported by the Natural Science Foundation of Jiangsu Province grant BK20190745 and the Natural Science Foundation of the Jiangsu Higher Institutions of China grant 18KJB110015 and the Youth Science and Technology Innovation Foundation of Nanjing Forestry University grant CX2019026. The third author was partially supported by the the NSF of China grant 10971096, and by the Project Funded by the Priority Academic Program Development of Jiangsu Higher Education Institutions.



\bibliographystyle{elsarticle-num}
\bibliography{ref}


%
%
%
\end{document}